\newcommand{\pr}[1]{\mathbb{P}\!\left(#1\right)}
\newcommand{\E}[1]{\mathbb{E}\!\left[#1\right]}
\newcommand{\prstart}[2]{\mathbb{P}_{#2}\!\left(#1\right)}
\newcommand{\prcond}[3]{\mathbb{P}_{#3}\!\left(#1\;\middle\vert\;#2\right)}
\newcommand{\prb}[1]{\mathbf{P}\!\left(#1\right)}
\newcommand{\Eb}[1]{\mathbf{E}\!\left[#1\right]}
\newcommand{\estartb}[2]{\mathbf{E}_{#2}\!\left[#1\right]}
\newcommand{\prcondb}[3]{\mathbf{P}_{#3}\!\left(#1\;\middle\vert\;#2\right)}
\newcommand{\dGH}[1]{d_{GH}\!\left(#1\right)}
\newtheorem{theorem}{Theorem}[section]
\newtheorem{lem}[theorem]{Lemma}
\newtheorem{prop}[theorem]{Proposition}
\newtheorem{cor}[theorem]{Corollary}
\newtheorem{defn}[theorem]{Definition}
\newtheorem{rmk}[theorem]{Remark}
\newcommand{\dGHPp}[1]{d_{GHP}\!\left(#1\right)}
\def\N{\mathbb{N}}
\def\L{\mathbb{L}}
\def\R{\mathbb{R}}
\def\F{\mathcal{F}}
\def\EE{\mathcal{E}}
\def\X{X^{\text{exc}}}
\def\Xb{X^{\text{br}}}
\def\HH{H^{\text{exc}}}
\renewcommand{\phi}{\varphi}
\renewcommand{\epsilon}{\varepsilon}
\def\T{\mathcal{T}}
\def\Levy{L\'{e}vy }
\def\Ito{It\^o }
\def\L{\mathcal{L}}
\def\cadlag{c\`{a}dl\`{a}g }
\def\Loop'{\textsf{Loop'}}
\def\Loop{\textsf{Loop}}
\def\exc{\text{exc}}
\def\osc{\textsf{Osc}}
\def\diam{\textsf{Diam}}
\def\lr{(\log r^{-1})}
\def\La{\L_{\alpha}}
\def\Ta{\T_{\alpha}}
\def\Sr2{S^{(\frac{1}{2}r)}_{\sigma}}
\def\Hm{H_{\text{max}}}
\def\Lai{\L^{\infty}_{\alpha}}
\def\PD{\textsf{PD}}
\def\bf{\beta_1}
\def\bE{\beta_3}
\def\bd{\beta_4}
\def\bg{\beta_2}
\def\bPb{\mathbf{P}}
\begin{document}

\title{Brownian motion on stable looptrees}
\author{Eleanor Archer\thanks{Mathematics Institute, University of Warwick, Coventry CV4 7AL, United Kingdom. Email: {E.Archer.1@warwick.ac.uk}.}}
\maketitle

\begin{abstract}
In this article, we introduce Brownian motion on $\alpha$-stable looptrees using resistance techniques, where $\alpha \in (1,2)$. We prove an invariance principle characterising it as the scaling limit of random walks on discrete looptrees, and prove precise local and global bounds on its heat kernel. We also conduct a detailed investigation of the volume growth properties of stable looptrees, and show that the random volume and heat kernel fluctuations are locally log-logarithmic, and globally logarithmic around leading terms of $r^{\alpha}$ and $t^{\frac{-\alpha}{\alpha + 1}}$ respectively. These volume fluctuations are the same order as for the Brownian continuum random tree, but the upper volume fluctuations (and corresponding lower heat kernel fluctuations) are different to those of stable trees.
\end{abstract}
%
%\tableofcontents
\textbf{AMS 2010 Mathematics Subject Classification:} 60K37 (primary), 60F17, 60G57, 60G52, 54E70

\textbf{Keywords and phrases:} random stable looptree, volume fluctuations, heat kernel estimates, stable \Levy process.

\section{Introduction}
Stable looptrees are a class of random fractal objects indexed by a parameter $\alpha \in (1,2)$ and can informally be thought of as the dual graphs of stable trees. Motivated by \cite{LeGMiermontScalingLimitsLargeFaces}, they were originally introduced by Curien and Kortchemski in \cite{RSLTCurKort}, and along with their discrete counterparts have been shown to be of increasing significance in the study of statistical mechanics models on random planar maps. For example, the same authors showed in \cite{CurKortUIPTPerc} that a stable looptree arises as the scaling limit of the boundary of a critical percolation cluster on the UIPT, and Richier showed in \cite{RichierIICUIHPT} that the incipient infinite cluster of the UIHPT has the form of an infinite discrete looptree. Further results along these lines can be found in \cite{CurKortUIPTPerc, CurKortDuqMan, StefStufBolzOuterplanar, BaurRichUIPQSkew, CurRichDualityRPMPerc, KortRichBoundaryRPMLooptrees}, though this is a very non-exhaustive list. More generally, they also arise as the scaling limits of boundaries of stable maps \cite{RichierMapBoundaryLimit}, are closely connected to the shredded stable spheres constructed in \cite{BjornCurStef}, and are emerging as an important tool in the programme to reconcile the theories of random planar maps and Liouville quantum gravity \cite{MillSheff, GwynnePfefferConnectivitySLE, BernardiHoldenSun}.

\begin{figure}[h]
\includegraphics[width=14cm]{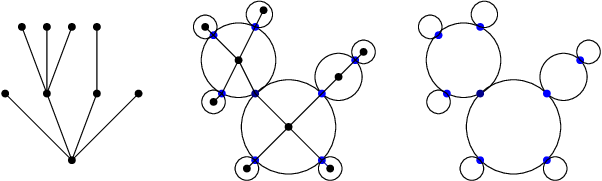}
\centering
\caption{A tree $T$ and the corresponding looptree \Loop($T$).}\label{fig:disc looptree intro}
\end{figure}

Stable looptrees can be formally defined from stable \Levy excursions but a key result of \cite{RSLTCurKort} is an invariance principle characterising them as the scaling limit of discrete looptrees.
More precisely, given a discrete tree $T$, the corresponding discrete looptree ${\Loop}(T)$ as defined in \cite{RSLTCurKort} is constructed by replacing each vertex $u \in T$ with a discrete cycle of length equal to the degree of $u$ in $T$, and then gluing these cycles along the tree structure of $T$. Each edge of $T$ then naturally corresponds to a vertex of $\Loop(T)$ as illustrated in Figure \ref{fig:disc looptree intro}. Two vertices in $\Loop(T)$ are adjacent if and only they correspond to edges in $T$ joining a vertex $v \in T$ to two of its consecutive offspring, or else one vertex corresponds to the edge joining to $v$ to its parent, and the other vertex corresponds to the edge joining $v$ to its first or last child. We give every edge in $\Loop(T)$ unit length, and the root of $\Loop(T)$ corresponds to the edge of $T$ joining its root to its first child. This operation can also be applied in the case where $T$ is an infinite tree.

In this article we will be interested in the case where $T$ has a critical offspring distribution in the domain of attraction of an $\alpha$-stable law, by which we mean that there exists an increasing sequence $a_n \uparrow \infty$ such that, if $(\xi^{(i)})_{i=1}^{\infty}$ are i.i.d. copies of $\xi$, then
\begin{equation}\label{eqn:dom of att def}
\frac{\sum_{i=1}^n \xi^{(i)} - n}{a_n} \overset{(d)}{\rightarrow} Z_{\alpha}
\end{equation}
as $n \rightarrow \infty$, where $Z_{\alpha}$ is an $\alpha$-stable random variable. In the case $\alpha < 2$, this is equivalent to saying that $\xi([n, \infty)) = n^{-\alpha} L(n)$ for some slowly-varying function $L$.

Throughout the article we will make the assumption that $\alpha \in (1,2)$, $\xi$ is an offspring distribution in the domain of attraction of an $\alpha$-stable law, and let $(a_n)_{n=1}^{\infty}$ be the sequence appearing in (\ref{eqn:dom of att def}). In \cite[Theorem 4.1]{RSLTCurKort}, it is shown that if $T_n$ is a Galton Watson tree conditioned to have $n$ vertices with offspring distribution $\xi$, then we can define the $\alpha$-stable looptree (denoted $\L_{\alpha}$) to be the random compact metric space such that 
\[
a_n^{-1} {\Loop}(T_n) \overset{(d)}{\rightarrow} \L_{\alpha}
\]
in the Gromov-Hausdorff topology as $n \rightarrow \infty$, where $c \cdot M$ denotes the metric space obtained from $M$ by multiplying all distances by $c>0$. A simulation is shown in Figure \ref{fig:stable looptree intro}. In the case $\alpha = 2$, the looptrees instead rescale to the Brownian Continuum Random Tree \cite[Theorem 2]{KortRichCondensationCritical}.

The purpose of this article is to introduce and study Brownian motion on stable looptrees, and we start in Section \ref{sctn:limit results} by proving a similar invariance principle that identifies it as the scaling limit of random walks on discrete looptrees, given below. As a consequence, it also follows that the rescaled transition densities and mixing times converge respectively to those of the limiting Brownian motion.

\begin{theorem}\label{thm:main RW inv princ compact quenched}
Let $T_n$ be as above, let $Z^{(n)}$ denote a discrete-time simple random walk on $\Loop (T_n)$, and let $(B_t)_{t \geq 0}$ denote Brownian motion on $\La$. There exists a probability space $(\Omega', \mathcal{F}', \mathbf{P}')$ on which we can (pointwise) define isometric embeddings of $(a_n^{-1}\Loop(T_n))_{n \geq 1}$ and $\La$ into a common metric space $(M, d_M)$ so that
\[
a_n^{-1}\Loop(T_n) \rightarrow \La
\]
almost surely with respect to the Hausdorff metric. In this metric space, we also have that
\[
\Big(a_n^{-1} Z^{(n)}_{\lfloor 4n a_nt \rfloor}\Big)_{t \geq 0} \overset{(d)}{\rightarrow} (B_t)_{t \geq 0}
\]
as $n \rightarrow \infty$, by which we mean that, almost surely on $(\Omega', \mathcal{F}', \mathbf{P}')$, the laws of these processes converge weakly on the space $D([0, \infty), M$) endowed with the uniform topology.
\end{theorem}

In fact, we prove a slightly more general version of the theorem that holds for any sequence of discrete trees satisfying the assumptions of \cite[Theorem 4.1]{RSLTCurKort}, but we are mainly interested in applying it in the stable case. The Brownian motion $(B_t)_{t \geq 0}$ is constructed via the theory of Dirichlet forms and resistance metrics using the now classical theory of \cite{AOF}. The bulk of this article is then devoted to a detailed study of the resistance volume growth of stable looptrees, from which we obtain heat kernel estimates using results of \cite{CroyHKFluctNonUn}. The volume growth results also have implications for the Hausdorff and packing measures of stable looptrees with respect to certain gauge functions, for which we prove results analogous to those proved by Duquesne, Le Gall and Wang for stable trees in \cite{DuqLeGPFALT}, \cite{DuqLeGHausdorffStable}, \cite{DuqWangSmallBallsStable} and \cite{DuqPackingHausdorff}, and by Croydon for the Brownian continuum random tree (CRT) in \cite{DavidCRT}. Additionally, the results imply that the packing dimension of $\La$ is equal to $\alpha$, which is the same as the Hausdorff dimension that was established in \cite{RSLTCurKort}.

\begin{figure}[h]
\includegraphics[width=10cm]{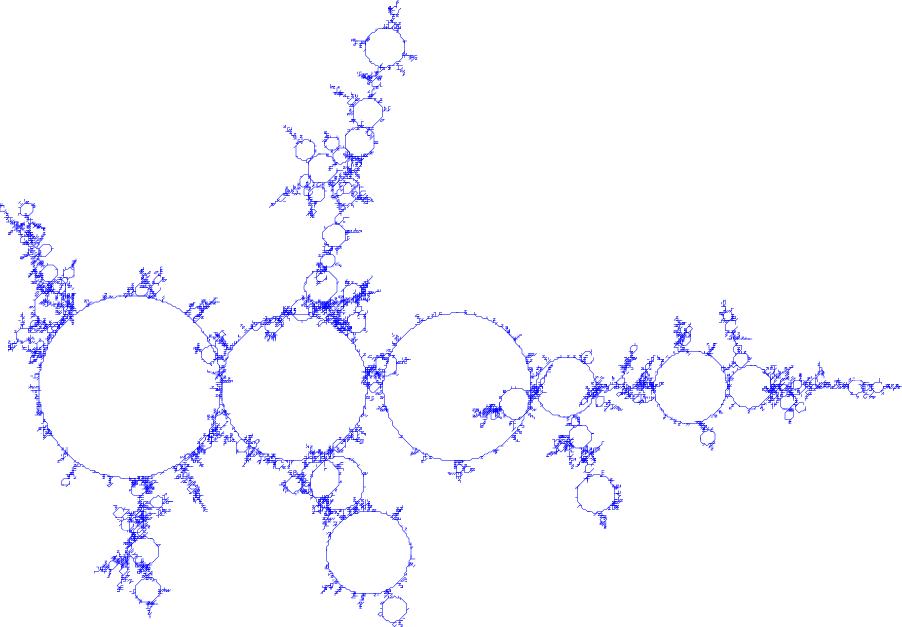}
\centering
\caption{Simulation of a stable looptree, copied from \cite{RSLTCurKort}.}\label{fig:stable looptree intro}
\end{figure}

In the past, resistance growth analysis has mainly been useful in studying random walks on trees (for example \cite{DavidCRT}, \cite{CroyHamSpectralCRT}, \cite{CroyHamSpectralStable}), since in this case the resistance metric and the geodesic metric are the same. In the case of looptrees the two metrics are different, but we will show that they are equivalent, which allows us to use the two metrics interchangeably when proving the volume bounds.

We will use two main approaches to prove the looptree volume bounds. One approach, used to prove most of the volume lower bounds in this article, builds on ideas of \cite{RSLTCurKort} by comparing looptree volume fluctuations with fluctuations in the \Levy excursion that code them. This comparison cannot be used to prove upper bounds however, and a substantial part of this article is devoted to introducing an iterative decomposition of stable looptrees that we use to prove the upper volume bounds. The procedure utilises the Williams' decomposition of stable trees given in \cite{AbDelWilliamsDecomp} to decompose $\La$ along a loopspine, breaking it into smaller fragments which are all smaller rescaled looptrees. We then reapply the decomposition to these resulting fragments, and continue to repeat the decomposition on the fragments we obtain each time. This procedure can be realised as a separate branching process, which we will analyse in Sections \ref{sctn:sup vol UBs} and \ref{sctn:inf UBs} to prove the upper volume bounds.

We now summarise the volume bound results. We will give proper definitions of all the quantities involved in Section \ref{sctn:tree looptree def}, but for now we note that $\nu$ denotes the natural analogue of uniform volume measure on stable looptrees. Due to the equivalence of metrics, these results will hold regardless of whether we define the open ball $B(u,r)$ (and its closure $\bar{B}(u,r)$) using the shortest distance metric or the effective resistance metric. As in \cite{RSLTCurKort}, we denote the $\alpha$-stable looptree by $\La$, and its root by $\rho$. We assume that our looptree is defined on the probability space $(\Omega, \F, \mathbf{P})$, and let $\mathbf{E}$ denote expectation on this space.

We start with the following global (uniform) volume bounds for small balls in $\La$, which demonstrate both upper and lower fluctuations of logarithmic order.

\begin{theorem}\label{thm:main global}
$\mathbf{P}$-almost surely, there exist constants $C_1, C_2 \in (0, \infty)$ such that for all $r \in (0, \diam (\L_{\alpha}))$:

\begin{minipage}{0.5\linewidth} 
\begin{equation}\label{eqn:glob inf LB}  \inf_{u \in \L_{\alpha}} \nu \big( B(u,r) \big) \geq C_1 r^{\alpha} (\log r^{-1})^{-\alpha}  \end{equation} 
\end{minipage} 
\begin{minipage}{0.5\linewidth}  \begin{equation}\label{eqn:glob sup UB}  \sup_{u \in \La} \nu \big(B(u, r)\big) \leq C_2 r^{\alpha} (\log r^{-1})^{\frac{4\alpha - 3}{{\alpha - 1}}} \end{equation}  \end{minipage}
\begin{minipage}{0.5\linewidth} \begin{equation}\label{eqn:glob sup LB}  \limsup_{r \downarrow 0} \Bigg( \frac{ \sup_{u \in \La} \nu (B(u, r))}{r^{\alpha} \log r^{-1}} \Bigg) > 0.  \end{equation}  
\end{minipage} 
\begin{minipage}{0.5\linewidth}  \begin{equation}\label{eqn:glob inf UB}  \liminf_{r \downarrow 0} \Bigg( \frac{\inf_{u \in \La}\nu(B(u, r))}{r^{\alpha} \lr^{-(\alpha - 1)}} \Bigg) < \infty  \end{equation}
\end{minipage}
\end{theorem}

We also have the following local (pointwise) results.

\begin{theorem}\label{thm:main local}
$\mathbf{P}$-almost surely, for $\nu$-almost every $u \in \La$ we have:

\begin{minipage}{0.5\linewidth} 
\begin{equation}\label{eqn:loc inf LB} \liminf_{r \downarrow 0} \Bigg( \frac{\nu(B(u, r))}{r^{\alpha}(\log \log r^{-1})^{-\alpha}} \Bigg) > 0 \end{equation}
\end{minipage} 
\begin{minipage}{0.5\linewidth}  
\begin{equation}\label{eqn:loc sup UB} \limsup_{r \downarrow 0} \Bigg( \frac{\nu(B(u, r))}{r^{\alpha} (\log \log r^{-1})^{\frac{4\alpha - 3}{{\alpha - 1}}}} \Bigg) < \infty  \end{equation}
  \end{minipage}
\begin{minipage}{0.5\linewidth} 
\begin{equation}\label{eqn:loc sup LB}  \limsup_{r \downarrow 0} \Bigg( \frac{\nu (B(u, r))}{r^{\alpha} \log \log r^{-1}} \Bigg) > 0 \end{equation}
\end{minipage} 
\begin{minipage}{0.5\linewidth}  \begin{equation}\label{eqn:loc inf UB}  \liminf_{r \downarrow 0} \Bigg( \frac{\nu (B(u, r))}{r^{\alpha} (\log \log r^{-1})^{-(\alpha - 1)}} \Bigg) < \infty. \end{equation}  \end{minipage}

\end{theorem}

We remark here that the log-logarithmic fluctuations are the same order (up to exponents) as those obtained for a certain class of random recursive fractals in \cite{HamJonesThinThickRandomRecursive}, and specifically the same as those obtained for the Brownian CRT in \cite[Theorem 1.3]{DavidCRT}. However, the upper volume fluctuations contrast with those for stable trees which were shown to be logarithmic in \cite[Theorem 1.4]{DuqLeGHausdorffStable} when $\alpha \in (1,2)$. Intuitively, this is because denser points in stable trees are spread out by larger loops in stable looptrees, creating a more uniform spread of mass. However, the lower fluctuations for stable trees are also log-logarithmic (see \cite[Theorem 1.1]{DuqExactPackingLevy} and \cite[Theorem 1.2]{DuqWangSmallBallsStable}). As in \cite{DuqLeGHausdorffStable} and \cite{DuqExactPackingLevy}, our results can also be interpreted to give precise bounds on possible gauge functions for exact Hausdorff and packing measures.

The results of Theorem \ref{thm:main global} show that stable looptrees almost surely satisfy the assumptions of \cite[Equation 1.2]{CroyHKFluctNonUn}, and so we can apply the results of that article to deduce that the transition density of the Brownian motion $(B_t)_{t \geq 0}$ almost surely exists, and study its properties. More precisely, the transition density $p_t(\cdot, \cdot)$ is a symmetric $\nu \times \nu$-measurable function on $\La \times \La$ such that
\[
\estartb{f(B_t)}{x} = \int_{\La} f(y) p_t(x,y) \nu (dy)
\]
for all bounded, $\nu$-measurable functions $f$ on $\La$ and all $x \in \La$. (For reference, a heat kernel is any similar integral kernel so in general is only defined up to a $\nu$-null set, but in our case, the transition density will be continuous so we will use the two terms interchangeably).

One particular quantity of interest is the spectral dimension of $\La$, defined by
\begin{equation*}
d_S(\La) = \lim_{t \rightarrow \infty} \frac{-2\log( p_t(\rho, \rho))}{\log t}.
\end{equation*}
Using \cite{CroyHKFluctNonUn}, we show that $d_S(\La) = \frac{2\alpha}{\alpha + 1}$ almost surely, and also go a step further to obtain the following quenched bounds on the transition density. Here $\gamma_1$ is a deterministic constant, dependent on $\alpha$, that we will write down explicitly in Section \ref{sctn:HK estimates compact looptrees}.

\begin{theorem}\label{thm:main HK global}
$\mathbf{P}$-almost surely, there exist $t_0, C_3, C_4 \in (0, \infty)$ such that
\begin{align*}
C_3 t^{\frac{-\alpha}{\alpha + 1}} (\log t^{-1})^{-\gamma_1} \leq p_t(x,x) \leq C_4 t^{\frac{-\alpha}{\alpha + 1}} (\log t^{-1})^{\alpha}
\end{align*}
for all $x \in \La$ and all $t \in (0, t_0)$. Moreover, it holds $\mathbf{P}$-almost surely that
\begin{align*}
\liminf_{t \downarrow 0} \frac{\inf_{x \in \La} p_t(x,x) }{t^{\frac{-\alpha}{\alpha + 1}}(\log t^{-1})^{-1}} < \infty, \ \ \ \ \ \limsup_{t \downarrow 0} \frac{\sup_{x \in \La} p_t(x,x)}{t^{\frac{-\alpha}{\alpha + 1}}(\log t^{-1})^{\alpha - 1}} > 0.
\end{align*}
\end{theorem}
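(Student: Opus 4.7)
The plan is to deduce Theorem \ref{thm:main HK global} as a direct application of the general heat-kernel bounds of \cite{CroyHKFluctNonUn} to the resistance form on $\La$ constructed via \cite{AOF}, fed with the volume estimates of Theorem \ref{thm:main global}. First I would verify the hypotheses needed to invoke Croydon's framework on $(\La,R,\nu)$. By the equivalence of the resistance and shortest-distance metrics on stable looptrees (proved earlier in the paper), the balls appearing in Croydon's volume assumptions agree with those in Theorem \ref{thm:main global} up to a universal multiplicative constant. The measure $\nu$ is Radon of full support, and regularity of the associated Dirichlet form is built into the construction of Brownian motion in Section \ref{sctn:limit results}; hence the assumptions of \cite[Equation 1.2]{CroyHKFluctNonUn} are satisfied $\mathbf{P}$-almost surely, with admissible gauge functions $v(r) \asymp r^{\alpha} \lr^{-\alpha}$ and $V(r) \asymp r^{\alpha} \lr^{(4\alpha-3)/(\alpha-1)}$ coming from the uniform half of Theorem \ref{thm:main global}.

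Second, to obtain the uniform quenched bounds in the first display, I would apply Croydon's main theorem, whose heuristic content is $p_t(x,x) \asymp 1/\nu(B(x, r(t)))$ with $r(t)$ determined by $r(t)\,\nu(B(x,r(t))) \asymp t$; upper bounding the volume by $V$ in this relation produces a lower bound on $p_t$, while lower bounding it by $v$ produces an upper bound. Substituting our gauges and inverting the implicit relations $r\,V(r) \asymp t$ and $r\,v(r) \asymp t$ modulo logarithmic corrections yields $r(t) \asymp t^{1/(\alpha+1)}$ up to a power of $\log t^{-1}$. Plugging back in then produces $1/V(r(t)) \asymp t^{-\alpha/(\alpha+1)} (\log t^{-1})^{-\gamma_1}$ and $1/v(r(t)) \asymp t^{-\alpha/(\alpha+1)} (\log t^{-1})^{\alpha}$, where $\gamma_1$ is precisely the exponent output by inverting the relation for $V$, which I would record explicitly in Section \ref{sctn:HK estimates compact looptrees}.

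Third, for the almost-sure limsup and liminf statements in the second display, the corresponding fluctuation theorem in \cite{CroyHKFluctNonUn} converts almost-sure fluctuation bounds on $\inf_u \nu(B(u,r))$ and $\sup_u \nu(B(u,r))$ directly into almost-sure fluctuation bounds on $\inf_x p_t(x,x)$ and $\sup_x p_t(x,x)$ via the same scaling relation. Feeding in the sharper pointwise fluctuation bounds from the second line of Theorem \ref{thm:main global} (those with exponents $-(\alpha-1)$ and $1$) immediately produces the exponent-$(\alpha-1)$ and exponent-$(-1)$ conclusions. The main obstacle is really nothing substantive in this proof itself, since it is a black-box application once Theorem \ref{thm:main global} is available; the only genuine care required is the bookkeeping of logarithmic exponents through the inversion of $r \mapsto r V(r)$ and $r \mapsto r v(r)$, and confirming that the equivalence of the resistance and geodesic metrics permits free translation between the two formulations of the volume hypothesis.
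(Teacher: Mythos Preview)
Your approach for the first display is correct and essentially identical to the paper's: both apply \cite[Theorem 1]{CroyHKFluctNonUn} directly with the uniform volume bounds $f_l(r)=C(\log r^{-1})^{-\alpha}$, $f_u(r)=C(\log r^{-1})^{(4\alpha-3)/(\alpha-1)}$ and $\beta_l=\beta_u=\alpha$, yielding $\gamma_1=\theta_1\big(\alpha+\tfrac{4\alpha-3}{\alpha-1}\big)$ with $\theta_1=(3+2\alpha)(2+\alpha)$.

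For the second display there is a small gap in your claim that it is a pure black-box application. The fluctuation result \cite[Theorem 2]{CroyHKFluctNonUn} is stated under volume hypotheses of the \emph{eventual} type (the extremal ratios are controlled for all sufficiently small $r$), whereas the second line of Theorem~\ref{thm:main global} only asserts that the relevant ratios are finite, respectively positive, \emph{infinitely often} as $r\downarrow 0$. The paper flags this discrepancy explicitly and obtains the second part of Theorem~\ref{thm:main HK global} not by citing \cite[Theorem 2]{CroyHKFluctNonUn} as stated, but by rerunning its proof under these weaker ``infinitely often'' hypotheses to produce correspondingly weaker ``infinitely often'' conclusions for $p_t$. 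The adaptation is routine, but it is not quite the direct substitution you describe, and your statement that ``the main obstacle is really nothing substantive'' slightly understates this point.
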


We also use the local volume bounds of Theorem \ref{thm:main local} to deduce pointwise heat kernel estimates. Note however that one of the lower bounds in Theorem \ref{thm:main HK local} is missing. Heat kernel lower bounds are generally more subtle to obtain than upper bounds, and in particular in this case we would need some additional global volume control to apply the chaining arguments of \cite{CroyHKFluctNonUn} that are used to prove the corresponding global bound in Theorem \ref{thm:main HK global}.

\begin{theorem}\label{thm:main HK local}
$\mathbf{P}$-almost surely, for any $\epsilon>0$ we have for $\nu$-almost every $x \in \La$ that
\begin{align*}
\liminf_{t \downarrow 0} \frac{p_t(x,x)}{t^{\frac{-\alpha}{\alpha +1}}(\log \log t^{-1})^{\frac{-1}{\alpha + 1}}} < \infty, \ \ \ \limsup_{t \downarrow 0}  \frac{p_t(x,x)}{t^{\frac{-\alpha}{\alpha +1}}(\log \log t^{-1})^{\frac{\alpha}{\alpha + 1}}} &< \infty, \\
\limsup_{t \downarrow 0}  \frac{p_t(x,x)}{t^{\frac{-\alpha}{\alpha +1}}(\log \log t^{-1})^{\frac{\alpha-1-\epsilon}{\alpha + 1}}} &> 0.
\end{align*}
\end{theorem}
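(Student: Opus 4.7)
The plan is to convert the local volume fluctuations of Theorem~\ref{thm:main local} into the pointwise heat kernel bounds claimed here by applying the general volume--heat kernel correspondence for resistance forms, exactly as is done for the global bounds in the proof of Theorem~\ref{thm:main HK global}. The guiding heuristic is that for a strongly recurrent diffusion on a resistance metric space, $p_t(x,x) \asymp 1/\nu(B(x, r(t)))$, where $r(t)$ is determined implicitly by $r(t)\,\nu(B(x, r(t))) \asymp t$. A volume profile $\nu(B(x,r)) \asymp r^{\alpha} \varphi(r)$ with $\varphi$ slowly varying thus translates into a heat kernel profile of order $t^{-\alpha/(\alpha+1)} \varphi(t^{1/(\alpha+1)})^{-1/(\alpha+1)}$, and the three estimates arise from plugging in the four local volume profiles supplied by Theorem~\ref{thm:main local}.

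For the two upper heat kernel bounds I would apply the standard resistance-space estimate $p_t(x,x) \le C/\nu(B(x, r))$ which holds whenever $t \le c\, r\, \nu(B(x, r))$; this follows from the mean-exit-time identity $\tau(x, r) \asymp R_{\mathrm{eff}}(x, B(x,r)^c)\, \nu(B(x, r))$ together with the equivalence between the effective resistance and the shortest-path metric established earlier in the paper, which gives $R_{\mathrm{eff}}(x, B(x,r)^c) \asymp r$. The uniform $\limsup$ estimate (middle line of the theorem) follows by plugging in the always-true lower bound $\nu(B(x,r)) \ge c\, r^{\alpha}(\log \log r^{-1})^{-\alpha}$ of Theorem~\ref{thm:main local} and solving $t \asymp r \nu(B(x,r))$ for $r$; the slowly-varying factor with exponent $-\alpha$ converts into a heat kernel factor with exponent $\alpha/(\alpha+1)$. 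The first ($\liminf$) estimate is obtained analogously, but along the subsequence $r_n$ produced by $\limsup_{r \downarrow 0} \nu(B(x,r))/(r^{\alpha} \log \log r^{-1}) > 0$: setting $t_n \asymp r_n \nu(B(x, r_n))$ and using $\log \log t_n^{-1} \asymp \log \log r_n^{-1}$ yields the $-1/(\alpha+1)$ exponent.

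For the lower heat kernel bound I would use the standard Cauchy--Schwarz estimate
\[
p_{2t}(x,x) \;\ge\; \frac{\mathbf{P}(Y_t \in B(x, r) \mid Y_0 = x)^2}{\nu(B(x, r))},
\]
where $Y$ denotes Brownian motion on $\La$. By Markov's inequality applied to the exit time from $B(x,r)$, the numerator is bounded below by $1/4$ whenever $t \le \tau(x, r)/2 \asymp r\, \nu(B(x,r))/2$. Combining this with the subsequence volume upper bound $\nu(B(x, r_n)) \le C\, r_n^{\alpha}(\log \log r_n^{-1})^{-(\alpha-1)}$ of Theorem~\ref{thm:main local} and taking $t_n$ proportional to $r_n \nu(B(x, r_n))$ produces, after reparametrising in $t_n$, the exponent $(\alpha-1)/(\alpha+1)$. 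The small loss $\epsilon$ in the theorem accommodates slack in controlling implicit constants, in the quantitative form of the resistance--geodesic equivalence, and in the change of variables between $r_n$ and $t_n$ through the slowly-varying $\log \log$ correction.

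The main obstacle will be this lower bound: the upper heat kernel bounds reduce essentially to algebraic manipulations of the volume estimates once the mean-exit-time identity is available, whereas the lower bound requires genuine concentration of the diffusion in a ball of the correct scale at time $t_n$. Unlike the missing uniform $\liminf$ lower bound, which as the excerpt remarks would require the chaining arguments of \cite{CroyHKFluctNonUn} and hence global volume control, the along-subsequence lower bound does not need chaining; the delicate point is instead to verify carefully that the exit-time estimate can be combined pointwise with the liminf volume subsequence of Theorem~\ref{thm:main local}, and it is in this bookkeeping --- together with the fact that one cannot in general align the liminf subsequence for the volume with a clean time-subsequence --- that the arbitrary $\epsilon$ is lost.
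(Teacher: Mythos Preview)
Your outline for the two upper bounds is essentially what the paper does: both follow from the local volume lower bounds of Theorem~\ref{thm:main local} via the machinery of \cite{CroyHKFluctNonUn}, and the algebra converting volume exponents into heat kernel exponents is as you describe. (A minor quibble: the on-diagonal upper bound $p_t(x,x)\le C/\nu(B(x,r))$ in the resistance setting comes from the H\"older continuity $|p_t(x,y)-p_t(x,x)|^2\le R(x,y)\,p_t(x,x)/t$, not from an exit-time lower bound, so you do not actually need $R_{\mathrm{eff}}(x,B(x,r)^c)\ge cr$ there.)

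The real gap is in the lower heat kernel bound. You assert that ``the equivalence between the effective resistance and the shortest-path metric'' gives $R_{\mathrm{eff}}(x,B(x,r)^c)\asymp r$. This is false. Lemma~\ref{lem:dR compare} compares $R(x,y)$ and $d(x,y)$ for \emph{pairs of points}; it says nothing about the effective resistance from a point to a \emph{set}, which is defined variationally and can be far smaller than $\inf_{y\notin B(x,r)}R(x,y)$ when there are many disjoint paths out of the ball. Without a genuine lower bound on $R_{\mathrm{eff}}(x,B(x,r)^c)$ you have no lower bound on $E_x[\tau_{B(x,r)}]$, and so the Markov/Cauchy--Schwarz step cannot be carried out. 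This is not bookkeeping or the source of the $\epsilon$-loss; it is the main obstacle.

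The paper handles this by proving an additional proposition: along the \emph{same} subsequence $r_n\downarrow 0$ on which the small-volume event $\nu(B(\rho,r_n))\le C r_n^\alpha(\log\log r_n^{-1})^{-(\alpha-1)}$ occurs, one also has $R_{\mathrm{eff}}(\rho,B(\rho,r_n)^c)\ge r_n/64$. The argument is geometric: one revisits the Williams' decomposition along the W-loopspine used in Section~\ref{sctn:inf UBs} and shows, via a cutset bound in the style of \cite[Lemma 4.5]{BarKumRWIICTrees}, that with high probability there are at most four points through which any path from $\rho$ to $B(\rho,r_n)^c$ must pass. This control of the number of exit routes is what converts the pointwise metric equivalence into the required point-to-set resistance bound, and it is an essential ingredient that your proposal is missing.
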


We can similarly apply the results of \cite{DavidCRT} to get off diagonal heat kernel bounds. Once again, $\gamma_2$ and $\gamma_3$ are deterministic constants (dependent on $\alpha$) and we will give their explicit values in Section \ref{sctn:HK estimates compact looptrees}.

\begin{theorem}\label{thm:main HK off diag}
$\mathbf{P}$-almost surely, there exist $t_0', C_5, C_6, C_7, C_8 \in (0, \infty)$ such that for all $x, y \in \La$ and all $t \in (0, t_0')$, we have
\begin{align*}
p_t(x,y) &\leq C_5 t^{\frac{-\alpha}{\alpha + 1}} (\log t^{-1})^{\alpha} \exp \{- C_6 {\tilde{d}}^{1+\frac{1}{\alpha}} t^{\frac{-1}{\alpha}} (\log t^{-1}\tilde{d})^{-\gamma_3} \}, \\
p_t(x,y) &\geq 
C_7 t^{\frac{-\alpha}{\alpha + 1}} (\log t^{-1})^{-\gamma_1} \exp \{ -C_8 {\tilde{d}}^{1+\frac{1}{\alpha}} t^{\frac{-1}{\alpha}} (\log t^{-1}\tilde{d})^{\gamma_2}  \}.
\end{align*}
Here $\tilde{d} = \tilde{d}(x,y)$ can denote the distance between $x$ and $y$ with respect to either the shortest distance metric on $\La$, or the effective resistance metric.
\end{theorem}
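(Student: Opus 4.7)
The plan is to deduce both bounds from the abstract off-diagonal heat kernel estimates developed by Croydon in \cite{DavidCRT} for resistance metric spaces satisfying volume and on-diagonal bounds of precisely the form we have already established. The two inputs are already in hand: the uniform global volume bounds of Theorem \ref{thm:main global}, which give $c_1 r^\alpha \lr^{-\alpha} \leq \nu(B(u,r)) \leq c_2 r^\alpha \lr^{(4\alpha-3)/(\alpha-1)}$ uniformly in $u \in \La$ almost surely, and the on-diagonal heat kernel bounds of Theorem \ref{thm:main HK global}. Since the resistance and geodesic metrics on $\La$ are equivalent (as discussed in the introduction), it suffices to prove the estimates in one metric, and $\tilde{d}$ in the statement may be taken to denote either.

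For the upper bound I would run a chaining argument in the resistance metric. Given $x, y$ at resistance distance $R$, one fixes an integer $N$ (to be optimised), picks intermediate points $x_0 = x, x_1, \ldots, x_N = y$ with consecutive resistance spacing of order $R/N$, and combines the semigroup property with the Cauchy--Schwarz inequality $p_t(u,v) \leq p_t(u,u)^{1/2} p_t(v,v)^{1/2}$ together with the Barlow--Bass--Kumagai type near-diagonal Gaussian estimate used in \cite{DavidCRT}. Minimising over $N$ yields the sub-Gaussian exponent $\tilde{d}^{1+1/\alpha} t^{-1/\alpha}$, which is the expected expression $(d/\psi^{-1}(t))^{d_w/(d_w-1)}$ for a resistance space of volume exponent $\alpha$ and walk dimension $d_w = \alpha+1$. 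The polylogarithmic correction accumulates from the volume upper bound evaluated along the chain, contributing the factor $(\log(t^{-1}\tilde{d}))^{-\gamma_3}$ in the exponent.

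For the lower bound I would first establish a near-diagonal lower bound of the form $p_t(x,y) \geq c\, t^{-\alpha/(\alpha+1)} (\log t^{-1})^{-\gamma_1}$ whenever $\tilde{d}(x,y)^{1+1/\alpha} \leq c' t$, by combining the on-diagonal lower bound of Theorem \ref{thm:main HK global} with the standard resistance-form inequality $p_t(x,y) \geq p_t(x,x) - (R(x,y) p_{2t}(x,x)/t)^{1/2}$, which controls the deviation of $p_t$ from its on-diagonal value in terms of the effective resistance. The full off-diagonal lower bound then follows by iterating Chapman--Kolmogorov along a chain from $x$ to $y$: if $N$ is chosen large enough that each segment satisfies the near-diagonal hypothesis, one integrates over tubes of the correct radius around equally spaced reference points on a geodesic and applies the near-diagonal bound on each step. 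This produces the negative sub-Gaussian exponent in the statement, with the log correction $(\log(t^{-1}\tilde{d}))^{\gamma_2}$ arising from the volume lower bound entering each application.

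The main obstacle will be tracking the precise exponents $\gamma_2$ and $\gamma_3$: they emerge from an optimisation over $N$ in which the polylogarithmic volume fluctuations of Theorem \ref{thm:main global} play against the power $1/\alpha$ from the resistance scaling, and the asymmetry between the upper and lower volume exponents forces $\gamma_2 \neq \gamma_3$. A secondary technical point is to verify that the abstract arguments of \cite{DavidCRT}, proved there for the CRT where both volume exponents collapse to $1$, extend to our polylog volume profile; this is essentially bookkeeping, but the particular exponent $(4\alpha-3)/(\alpha-1)$ must be inserted carefully at each place where the upper volume bound is used.
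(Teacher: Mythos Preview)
Your approach is essentially the same as the paper's: both deduce the off-diagonal bounds from Croydon's abstract machinery using the global volume bounds of Theorem \ref{thm:main global} as input. The paper simply cites \cite[Theorem 3]{CroyHKFluctNonUn} as a black box rather than sketching the chaining argument, and specifies the exponents $\gamma_2, \gamma_3$ via $\gamma_i = \theta_i(\alpha + \tfrac{4\alpha-3}{\alpha-1})$ for suitable $\theta_2 > \theta_1(1+\alpha)$ and $\theta_3 > (3+2\alpha)(1+2\alpha^{-1})$.

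One point you gloss over that the paper makes explicit: before invoking the chaining argument you must verify that $(\La, R)$ satisfies the Chaining Condition (CC) of \cite{CroyHKFluctNonUn}, i.e.\ that for any $x,y$ and any $n$ one can find intermediate points with consecutive spacing at most $cR(x,y)/n$. You implicitly assume this when you ``pick intermediate points with consecutive resistance spacing of order $R/N$'', but it is not automatic for a general resistance space. The paper checks it by noting that $(\La, d)$ is a length space (so CC holds with $c=1$ for $d$) and then transferring to $R$ via Lemma \ref{lem:dR compare}, giving $c=2$. This is the only genuine verification step beyond the volume bounds, and it should be stated rather than assumed.
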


A key step in these heat kernel estimates are bounds on the expected exit times from balls, which we will consider in Section \ref{sctn:HK estimates compact looptrees}. Finally, we give an annealed result for the transition density at the root, averaged over the law of $\La$. This also implies that the annealed spectral dimension is $\frac{2\alpha}{\alpha + 1}$.

\begin{theorem}\label{thm:main annealed HK conv}
There exists $C_9 \in (0, \infty)$ such that
\[
t^{\frac{\alpha}{\alpha + 1}} \Eb{p_t(\rho, \rho)} {\rightarrow} C_9
\]
as $t \downarrow 0$.
\end{theorem}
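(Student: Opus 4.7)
The plan is to deduce the convergence by comparing with the scale-invariant infinite looptree $\Lai$ via a zoom-in at the root. The infinite looptree, coded by a two-sided $\alpha$-stable \Levy process, inherits from the \Levy scaling $(X_{ct})_t \overset{d}{=} (c^{1/\alpha} X_t)_t$ the metric measure identity
\[
(\Lai, \mu d^{\infty}, \mu^{\alpha} \nu^{\infty}) \overset{d}{=} (\Lai, d^{\infty}, \nu^{\infty}) \qquad \text{for every } \mu > 0.
\]
Since Brownian motion on $\Lai$ is determined by its resistance form together with $\nu^{\infty}$, the standard heat kernel scaling on resistance forms gives $p^{\infty}_t(\rho, \rho) \overset{d}{=} \mu^{-\alpha}\, p^{\infty}_{\mu^{-(\alpha+1)} t}(\rho, \rho)$. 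Taking expectations and choosing $\mu = t^{1/(\alpha+1)}$ yields the exact identity $\mathbf{E}[p^{\infty}_t(\rho, \rho)] = t^{-\alpha/(\alpha+1)} C_9$, where $C_9 := \mathbf{E}[p^{\infty}_1(\rho, \rho)]$ — provided this is finite and positive.

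Next I would apply the same scaling to $(\La, d, \nu)$. With $\lambda := t^{1/(\alpha+1)}$, the heat kernel $\tilde{p}^{(\lambda)}$ on the rescaled compact space $(\La, \lambda^{-1} d, \lambda^{-\alpha} \nu, \rho)$ satisfies
\[
\tilde{p}^{(\lambda)}_1(\rho, \rho) = t^{\alpha/(\alpha+1)} p_t(\rho, \rho).
\]
As $\lambda \downarrow 0$, the rescaled space converges in the pointed Gromov-Hausdorff-Prokhorov topology to $(\Lai, d^{\infty}, \nu^{\infty}, \rho)$; this is essentially the definition of $\Lai$ as a local limit of $\La$ at the root, which can be read off from the \Levy process coding together with the finite-dimensional volume convergence in Theorem \ref{thm:main vol conv}. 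Continuity of the heat kernel under pointed GHP convergence of resistance metric measure spaces (in the spirit of the machinery underlying Theorem \ref{thm:main RW inv princ compact quenched}) then gives the distributional convergence $\tilde{p}^{(\lambda)}_1(\rho, \rho) \overset{(d)}{\rightarrow} p^{\infty}_1(\rho, \rho)$.

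The main obstacle is to upgrade this distributional convergence to convergence of expectations, which requires uniform integrability of $\{\tilde{p}^{(\lambda)}_1(\rho, \rho)\}_{\lambda \in (0, 1]}$. A generic heat kernel upper bound on resistance forms yields $\tilde{p}^{(\lambda)}_1(\rho, \rho) \leq C/\tilde{\nu}^{(\lambda)}(\tilde{B}(\rho, c)) = C \lambda^{\alpha}/\nu(B(\rho, c\lambda))$, reducing matters to showing that the normalized volume $(c\lambda)^{-\alpha}\nu(B(\rho, c\lambda))$ has a uniform $(1+\epsilon)$-negative moment as $\lambda \downarrow 0$. Theorem \ref{thm:main vol conv} supplies positive moments but not negative ones, so I would supplement it with an annealed lower-tail bound for $V_1$ extracted directly from the \Levy coding: a single macroscopic jump of the coding process near time zero already contributes a definite mass to the unit ball in $\Lai$, so the event $\{V_1 < \epsilon\}$ forces all such jumps to be absent, an event whose probability can be bounded polynomially in $\epsilon$ via standard stable process estimates. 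Together with the distributional convergence above, this delivers $\mathbf{E}[\tilde{p}^{(\lambda)}_1(\rho, \rho)] \to C_9 \in (0, \infty)$, which is the claim.
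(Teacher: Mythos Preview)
Your approach is essentially the same as the paper's: rescale so that $t^{\alpha/(\alpha+1)}p_t(\rho,\rho)$ becomes the time-$1$ heat kernel on a blown-up looptree, invoke the local limit $\La \to \Lai$ together with heat-kernel continuity under pointed GHP convergence (as in \cite{CroyHamLLT}) to get distributional convergence to $p_1^{\infty}(\rho,\rho)$, and then upgrade to expectations via uniform integrability. The paper phrases the scaling via the excursion-length parametrisation $p_t^{1}(\rho,\rho)\overset{(d)}{=}p_1^{t^{-1/(\alpha+1)}}(\rho,\rho)$, but this is the same rescaling you write down.

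The one place you diverge is the uniform integrability step, and there your argument is both weaker and imprecise. A jump of the coding process does not itself contribute mass to $B(\rho,1)$; it creates a loop, and the mass near the root comes from the sublooptrees grafted along that loop. So ``$\{V_1<\epsilon\}$ forces all macroscopic jumps to be absent'' is not literally correct, and a merely polynomial tail would in any case be a much weaker conclusion than what is already available. The paper simply invokes Proposition~\ref{prop:inf LB vol loc prob}, which gives the uniform exponential lower tail
\[
\prb{\nu(B(\rho,r))<r^{\alpha}\lambda^{-1}}\leq C\exp\{-c\lambda^{1/\alpha}\},
\]
so that $r^{-\alpha}\nu(B(\rho,r))$ has all negative moments bounded uniformly in $r$. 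Combined with your heat-kernel bound $\tilde p_1^{(\lambda)}(\rho,\rho)\leq C\lambda^{\alpha}/\nu(B(\rho,c\lambda))$, this gives uniform $L^p$ boundedness for every $p\geq 1$ and hence uniform integrability immediately. You should replace your ad hoc jump argument with a direct appeal to that proposition.
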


In light of these results, it also natural to investigate the associated eigenvalue counting function of the Laplacian $\Delta$ associated with $(B_t)_{t \geq 0}$. More precisely, let $R$ be the effective resistance metric on $\La$ (we will contruct this properly in Section \ref{sctn:limit results}), and let $(\EE, \F)$ be the Dirichlet form associated with the space $L^2 (\La, \nu)$ through the relation 
\[
R(x,y)^{-1} = \inf \{\EE(f,f): f \in \F, f(x)=0, f(y)=1 \}
\]
(this is $\bPb$-almost surely well-defined: see \cite[Section 1]{CroyHamSpectralStable} for more details on the construction for stable trees; the same principles apply for stable looptrees).
We say that $\lambda > 0$ is an eigenvalue of $(\EE, \F, \nu)$ with eigenfunction $f$ (assumed to be non-trivial) if
\[
\EE(f, g) = \lambda \int_{\La} fg \ d\nu
\]
for all $g \in \F$. The eigenvalue counting function $N(\lambda)$ is then defined as the number of eigenvalues of $(\EE, \F, \nu)$ that are less than or equal to $\lambda$. Due to the representation $\EE(f,g) = - \int_{\La} (\Delta f) g \ d\nu$, any eigenvalue of the operator $\Delta$ is also an eigenvalue of $(\EE, \F, \nu)$. Since $\La$ is compact and $(\EE, \F)$ is consequently regular, the converse also holds. Similar arguments to \cite{CroyHamSpectralStable} then lead to the following result.

\begin{theorem}\label{thm:spec asymp}
\begin{enumerate}[(i)]
\item For any $\epsilon > 0$, as $\lambda \rightarrow \infty$,
\[
\Eb{N(\lambda)} \sim C\lambda^{\frac{\alpha}{\alpha + 1}} + O (\lambda^{\frac{1}{\alpha + 1} + \epsilon}).
\]
\item $\bPb$-almost surely, $N(\lambda) \sim C \lambda^{\frac{\alpha}{\alpha + 1}}$ as $\lambda \rightarrow \infty$. More over, in $\bPb$-probability, the second order estimate of part (i) holds.
\end{enumerate}
\end{theorem}

Richier showed in \cite{RichierIICUIHPT} that the incipient infinite cluster (IIC) of the Uniform Infinite Half-Planar Triangulation (UIHPT) has the structure of a discrete looptree, but where each of the loops are filled with independent critically percolated Boltzmann triangulations. The size of the loops of this looptree are given by a distribution in the domain of attraction of a $\frac{3}{2}$-stable law and the results of our companion paper \cite{ArchInfiniteLooptrees} imply that the boundary of this cluster converges after rescaling to the infinite stable looptree $\L_{3/2}^{\infty}$. The question of the scaling limit of the whole cluster is more subtle but is conjectured to be the $\frac{7}{6}$-stable map \cite[Section 5.4]{BerCurMierPerconTriang}, and we hope the methods used in this article will be a good starting point for studying random walks on the IIC. In particular, we anticipate that such a random walk might fall into a framework similar to the discussions of \cite{AlRuiFreiKigSSG}, in that the looptree forming the boundary of the IIC may play a role analogous to that of the classical Sierpinski gasket in that article. If this is the case, then understanding random walks on looptrees is a crucial first step to understanding a random walk on the IIC.

Random walks on random infinite discrete looptrees were also studied by Bj\"ornberg and Stef\'ansson in \cite{BjornStef} using a generating function approach. As we also prove for $\Lai$ in \cite{ArchInfiniteLooptrees}, they prove that both the annealed and quenched spectral dimensions of a discrete infinite looptree with critical offspring distribution in the domain of attraction of an $\alpha$-stable law are equal to $\frac{2\alpha}{\alpha + 1}$. Their arguments also exploit the link with resistance growth properties of the space and they show that the volume of a typical ball of radius $r$ around the root almost surely undergoes at most logarithmic volume fluctuations around a leading term $r^{\alpha}$ as $r \rightarrow \infty$. This also gives logarithmic upper and lower bounds on the quenched and annealed transition density at the root. The exponential tail bound in equation (3.18) of their paper suggests however that their volume lower bound fluctuations can also be improved to log-logarithmic order, and we envisage that the approaches of this article can also be applied to their discrete case to give log-logarithmic upper and lower bounds on the volume and transition density fluctuations at typical points. This will be shown in the upcoming article \cite{ArchRWDecTrees}.

Finally, in \cite{StefStufBolzOuterplanar}, Stef\'ansson and Stufler show that stable looptrees also arise as scaling limits of outerplanar maps under appropriate conditions on their face weights. Their proof can be adapted to show that this convergence also holds with respect to the resistance metric, with the same scaling exponents, but just with a different scaling constant in front of the metric compared to that of \cite[Theorem 3.2]{StefStufBolzOuterplanar}. Moreover, Proposition \ref{thm:compact disc inv princ res} of this paper allows us to extend this to full Gromov-Hausdorff-Prohorov convergence on endowing the outerplanar maps with a uniform measure on their vertices. As a result, we deduce a similar scaling limit for random walks on these outerplanar maps, analogous to Theorem \ref{thm:main RW inv princ compact quenched} (random walks on outerplanar maps will also be treated more explicitly in \cite{ArchRWDecTrees}).

This paper is organised as follows. In Section \ref{sctn:prelim}, we introduce some of the technical background used throughout the article. In Section \ref{sctn:tree looptree def}, we give formal definitions stable trees and looptrees. In Section \ref{sctn:limit results}, we define a resistance metric on stable looptrees, use this to give a construction of the Brownian motion $(B_t)_{t \geq 0}$ and prove the invariance principle of Theorem \ref{thm:main RW inv princ compact quenched}, along with similar convergence results for associated quantities such as transition densities and mixing times. We then prove Theorems \ref{thm:main vol conv} to \ref{thm:main local} in Section \ref{sctn:vol bounds}. This is the most substantial section of the paper and is also where we introduce the iterative decomposition procedure mentioned above. Finally, we conclude in Section \ref{sctn:HK estimates compact looptrees} by proving the heat kernel estimates of Theorems \ref{thm:main HK global} to \ref{thm:main annealed HK conv}, and the spectral result of Theorem \ref{thm:spec asymp}.

Throughout this paper, $C, C', c$ and $c'$ will denote constants, bounded above and below, that may change on appearance.

\textbf{Acknowledgements.} I would like to thank my supervisor David Croydon for suggesting the problem and for many helpful discussions, as well as the anonymous referees for their detailed and helpful comments on the initial version of this manuscript. I would also like to thank the Great Britain Sasakawa Foundation for supporting a trip to Kyoto during which some of this work was completed, and Kyoto University for their hospitality during this trip.

\section{Preliminaries}\label{sctn:prelim}

\subsection{Gromov-Hausdorff-Prohorov Topologies}
In order to prove convergence results for compact measured metric spaces such as looptrees we will work in the pointed Gromov-Hausdorff-Prohorov topology. Accordingly, let $\mathbb{F}^c$ denote the set of quadruples $(F,R,\mu,\rho)$ such that $(F,R)$ is a compact metric space, $\mu$ is a locally finite Borel measure of full support on $F$, and $\rho$ is a distinguished point of $F$, which we call the root.

Suppose $(F,R,\mu,\rho)$ and $(F',R',\mu',\rho')$ are elements of $\mathbb{F}^c$. Given a metric space $(M, d_M)$, and isometric embeddings $\phi, \phi'$ of $(F,R)$ and $(F', R')$ respectively into $(M, d_M)$, we define $d^{GHP}_{M}\big((F,R,\mu,\rho, \phi), (F',R',\mu',\rho', \phi')\big)$ to be equal to
\begin{align*}\label{eqn:GHP def}
d_M^H(\phi (F), \phi' (F')) + &d_M^P(\mu \circ \phi^{-1}, \mu' \circ {\phi'}^{-1} ) + d_M(\phi (\rho), \phi' (\rho')).
\end{align*}
Here $d_M^H$ denotes the Hausdorff distance between two sets in $M$, and $d_M^P$ the Prohorov distance between two measures, as defined in \cite[Chapter 1]{BillsleyConv}. The pointed Gromov-Hausdorff-Prohorov distance between $(F,R,\mu,\rho)$ and $(F',R',\mu',\rho')$ is given by
\begin{align}
\begin{split}
\dGHPp{(F,R,\mu,\rho), (F',R',\mu',\rho')} = \inf_{\phi, \phi', M} d^{GHP}_{M}\big((F,R,\mu,\rho, \phi), (F',R',\mu',\rho', \phi')\big)\end{split}
\end{align}
where the infimum is taken over all isometric embeddings $\phi, \phi'$ of $(F,R)$ and $(F', R')$ respectively into a common metric space $(M, d_M)$. It is well-known (for example, see \cite[Theorem 2.3]{AbDelHoschNoteGromov}) that this defines a metric on the space of equivalence classes of $\mathbb{F}^c$, where we say that two spaces $(F,R,\mu,\rho)$ and $(F',R',\mu',\rho')$ are equivalent if there is a measure and root preserving isometry between them.
The pointed Gromov-Hausdorff distance $d_{GH}(\cdot, \cdot)$, which is defined by removing the Prohorov term from (\ref{eqn:GHP def}) above, can be helpfully defined in terms of \textit{correspondences}. A correspondence $\mathcal{R}$ between $(F,R,\mu,\rho)$ and $(F',R',\mu',\rho')$ is a subset of $F \times F'$ such that for every $x \in F$, there exists $y \in F'$ with $(x,y) \in \mathcal{R}$, and similarly for every $y \in F'$, there exists $x \in F$ with $(x,y) \in \mathcal{R}$. We define the \textit{distortion} of a correspondence by
\[
\textsf{dis} (\mathcal{R}) = \sup_{(x,x'), (y, y') \in \mathcal{R}} |R(x,y) - R(x',y')|.
\]
It is then straightforward to show that
\begin{align*}
\dGH{(F,R,\mu,\rho), (F',R',\mu',\rho')} = \frac{1}{2} \inf_{\mathcal{R}} \textsf{dis}(\mathcal{R}),
\end{align*}
where the infimum is taken over all correspondences $\mathcal{R}$ between $(F,R,\mu,\rho)$ and $(F',R',\mu',\rho')$ that contain the point $(\rho, \rho')$.

In this article, we will prove pointed Gromov-Hausdorff-Prohorov convergence by first proving  pointed Gromov-Hausdorff convergence using correspondences, and then show Prohorov convergence of the measures on the appropriate metric space.

\subsection{Stochastic Processes Associated with Resistance Metrics}\label{sctn:res forms}
To study Brownian motion and random walks on metric spaces we will be using the theory of resistance forms and resistance metrics, developed by Kigami in \cite{AOF} and  \cite{KigamiResistanceFormsMono}.

Let $G = (V,E)$ be a discrete graph equipped with non-negative symmetric edge conductances $c(x,y)_{(x,y) \in E}$ and a measure $(\mu(x))_{x \in V}$. Given these conductances, effective resistance on $G$ is a function $R$ on $V \times V$ defined by
\begin{equation}\label{eqn:resistance def variational}
R(x,y)^{-1} = \inf \{ \mathcal{E}(f,f)| f: V \rightarrow \R, f(x)=1, f(y)=0 \},
\end{equation}
where we take the convention $\inf \emptyset = \infty$, and $\mathcal{E}$ is an energy functional given by
\begin{equation*}\label{eqn:energy def}
\mathcal{E}(f,g) = \frac{1}{2} \sum_{x, y \in V} c(x,y) (f(y)- f(x))(g(y) - g(x)).
\end{equation*}
$R(x,y)$ corresponds to the usual physical notion of electrical resistance between $x$ and $y$ in $G$, when equipped with the given conductances. It can be shown (e.g. see \cite{Tetali}) that $R$ is a metric on $G$, and that $\mathcal{E}$ is a \textit{Dirichlet form} on $L^2(V, \mu)$.

The notion of a resistance metric can be extended to the continuum as follows.

\begin{defn}\label{def:eff resistance metric}\cite[Definition 2.3.2]{AOF}.
Let $F$ be a set. A function $R : F \times F$ is known as a \textit{resistance metric} on $F$ if and only if for every finite subset $V \subset F$, there exists a weighted graph with vertex set $V$ such that $R|_{V \times V}$ is the effective resistance on $(V,E)$, as defined by (\ref{eqn:resistance def variational}).
\end{defn}

A resistance metric on a set $F$ can be naturally associated with a stochastic process on $F$ via the theory of resistance forms. We do not give details of the theory here, but see \cite{KigamiResistanceFormsMono} for more on resistance forms. In particular, if $(F,R)$ is a compact metric space, then there is a one-to-one correspondence between resistance metrics and resistance forms on $F$ by \cite[Corollary 6.4]{KigamiResistanceFormsMono}. Moreover, if $(F,R)$ is additionally endowed with a finite Borel measure $\mu$ of full support, then by \cite[Theorem 9.4]{KigamiResistanceFormsMono}, the corresponding resistance form is in fact a regular Dirichlet form on $L^2(F, \mu)$, which in turn is naturally associated with a Hunt process on $F$ as a consequence of \cite[Theorem 7.2.1]{FOT}.

%Just as in the discrete case, given a resistance metric on $G$ and a regular Borel measure $\mu$ with full support on $G$, there is a unique way to construct a Dirichlet form $\mathcal{E}$ on the space $L^2(G, \mu)$, and consequently, a unique stochastic process associated with the pair $(R, \mu)$.

This correspondence allows us to use results about scaling limits of measured resistance metric spaces to prove results about scaling limits of stochastic processes as detailed in the following result of \cite{DavidResForms}.

\begin{theorem}\cite[Theorem 1.2, compact case]{DavidResForms}.\label{thm:scaling lim RW resistance}
Suppose that $(F_n, R_n, \mu_n, \rho_n)_{n \geq 0}$ is a sequence in $\mathbb{F}^c$ such that 
\[
(F_n, R_n, \mu_n, \rho_n) \rightarrow (F, R, \mu, \rho)
\]
with respect to the pointed Gromov-Hausdorff-Prohorov topology for some $(F, R, \mu, \rho) \in \mathbb{F}^c$, and $(R_n)_{n \geq 1}, R$ are resistance metrics on the respective spaces.

Let $(Y_t^{(n)})_{t \geq 0}$ and $(Y_t)_{t \geq 0}$ be the stochastic processes respectively associated with $(F_n, R_n, \mu_n, \rho_n)$ and $(F, R, \mu, \rho)$ as outlined above. Then it is possible to isometrically embed $(F_n, R_n)_{n \geq 1}$ and $(F,R)$ into a common metric space $(M, d_M)$ so that
\[
\prstart{(Y_t^{(n)})_{t \geq 0} \in \cdot}{\rho_n}{} \rightarrow \prstart{(Y_t)_{t \geq 0} \in \cdot}{\rho}{}
\]
weakly as probability measures as $n \rightarrow \infty$ on $D(\R_+, M)$ (i.e. on the space of \cadlag functions on $M$ equipped with the Skorohod $J_1$-topology).
\end{theorem}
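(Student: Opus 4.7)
The plan is to prove weak convergence in $D(\R_+, M)$ via the standard two-step strategy: tightness of the laws of $(Y^n_t)_{t \geq 0}$ on $D(\R_+, M)$ together with convergence of finite-dimensional distributions. To begin, I would use the pointed GHP convergence to fix isometric embeddings of every $(F_n, R_n)$ and of $(F, R)$ into a common compact metric space $(M, d_M)$ so that the images of $F_n$ converge to $F$ in Hausdorff distance, the pushforward measures $\mu_n$ converge to $\mu$ in the Prohorov metric, and $\rho_n \to \rho$ in $M$.

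For finite-dimensional distribution convergence, the natural route is through the Dirichlet forms associated with the resistance metrics. By Kigami's one-to-one correspondence between resistance metrics and regular Dirichlet forms cited above, each $R_n$ determines a regular Dirichlet form $(\mathcal{E}_n, \mathcal{F}_n)$ on $L^2(F_n, \mu_n)$, and I would show that the GHP convergence implies Mosco-type convergence of $\mathcal{E}_n$ to $\mathcal{E}$ on the common Hilbert space $L^2(M, \mu)$ after suitable lift-and-embed operators. This is equivalent to strong resolvent convergence $G^n_\lambda \to G_\lambda$, and the Trotter--Kato--Kurtz theorem then yields convergence of the semigroups, hence of finite-dimensional distributions started from $\rho_n \to \rho$.

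For tightness, uniform-in-$n$ moment bounds on the displacement of $Y^n_t$ in the common metric follow from Green's function and hitting time estimates on compact resistance spaces, combined with GHP-uniform control on the resistance diameters $\textsf{Diam}_{R_n}(F_n)$ and total masses $\mu_n(F_n)$. These bounds give modulus of continuity control on the sample paths, and since the images of $F_n$ all lie in a common compact subset of $M$, the marginal laws $\{Y^n_t : n \geq 0\}$ are automatically tight in $M$. An Aldous-type tightness criterion on $D(\R_+, M)$ then applies.

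The main obstacle I anticipate is establishing Mosco convergence directly from the GHP hypothesis. Two specific difficulties arise: first, the Dirichlet forms $\mathcal{E}_n$ live over different $L^2$ spaces, which requires a careful construction of approximating sequences of test functions on the common space $M$; and second, the $\liminf$ inequality in the Mosco definition must be verified using the variational characterisation of effective resistance in Definition \ref{def:eff resistance metric}, together with uniform convergence of $R_n$ to $R$ on finite embedded subsets (which is a consequence of Hausdorff convergence via correspondences). Once Mosco convergence is obtained, regularity of the limiting form and the Fukushima correspondence cited in Section \ref{sctn:res forms} produce the limiting Hunt process $(Y_t)_{t \geq 0}$ with the required weak convergence property.
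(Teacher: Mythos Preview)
This theorem is not proved in the paper: it is quoted as \cite[Theorem 1.2, compact case]{DavidResForms} and used as a black box. There is therefore no ``paper's own proof'' to compare against; the paper simply invokes Croydon's result to pass from the GHP convergence of Proposition~\ref{thm:compact disc inv princ res} to the random walk scaling limit in Theorem~\ref{thm:main RW inv princ compact quenched}.

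That said, your sketch is broadly along the lines of how the result is actually established in \cite{DavidResForms}, though with some differences in emphasis. Croydon's proof does embed into a common metric space and uses resolvent/semigroup convergence, but the key technical input is a convergence result for local times and occupation measures (building on \cite{CroyHamLLT}) rather than Mosco convergence per se. The Mosco-convergence route you describe is a reasonable alternative strategy, and something close to it has been carried out in related settings, but the specific difficulty you identify---handling Dirichlet forms over varying $L^2$ spaces---is genuine and is typically addressed via the Kuwae--Shioya framework for convergence of spectral structures rather than by ad hoc lift-and-embed operators. If you want to write out a full proof, consulting \cite{DavidResForms} directly (and the Kuwae--Shioya machinery it draws on) would be the efficient route; for the purposes of this paper, no proof is needed.
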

For more on the Skorohod-$J_1$ topology, see \cite[Chapter 3]{BillsleyConv}. The intuition behind the result above is that the convergence of metrics and measures respectively give the appropriate spatial and temporal convergences of the stochastic processes.

It is also the case that we can analyse the associated stochastic processes by analysing the resistance volume growth of the space. This is part of the motivation for proving Theorems \ref{thm:main global}, \ref{thm:main local} and \ref{thm:main vol conv}, as the heat kernel estimates of Theorems \ref{thm:main HK global}, \ref{thm:main HK local} and \ref{thm:main annealed HK conv} then follow by an application of results from \cite{CroyHKFluctNonUn}.

\subsection{Stable \Levy Excursions}\label{sctn:Levy background}
Following the presentations of \cite{DuqContourLimit} and \cite{RSLTCurKort}, we now introduce stable \Levy excursions, which will be used to code stable trees and looptrees in Section \ref{sctn:tree looptree def}.

Throughout this article, we take $\alpha \in (1,2)$, and $X$ will be an $\alpha$-stable spectrally positive \Levy process (i.e. with only positive jumps) as in \cite[Section 8]{BertoinLevy}, normalised so that
\[
\E{e^{-\lambda X_t}} = e^{{\lambda}^{\alpha}t}
\]
for all $\lambda > 0$. $X$ takes values in the space $D([0, \infty), \R)$ of \cadlag functions, endowed with the Skorohod-$J_1$ topology, and satisfies the scaling property that for any constant $c>0$, $(c^{-\frac{1}{\alpha}} X_{ct})_{t \geq 0}$ has the same law as $(X_t)_{t \geq 0}$. Moreover, $X$ has infinite variation \cite[Lemma 2.12]{KyprianouIntroLectures}, and has \Levy measure
\[
\Pi(dx) = \frac{\alpha(\alpha - 1)}{\Gamma(2-\alpha)} x^{-\alpha - 1} \mathbbm{1}_{(0, \infty)}(x) dx.
\]

To define a normalised excursion of $X$, we follow \cite{Chaumont} and let $\underline{X}_t = \inf_{s \in [0,t]} X_s$ denote its running infimum process, and set
\begin{align*}
g_1 = \sup \{ s \leq 1: X_s = \underline{X}_s \}, \hspace{10mm} d_1 = \inf \{ s > 1: X_s = \underline{X}_s \}.
\end{align*}

Note that, since $X$ has no negative jumps and is of infinite variation, $X_{g_1} = X_{d_1}$ almost surely. Following \cite[Proposition 1]{Chaumont}, we define the normalised excursion $X^{\text{exc}}$ of $X$ above its infimum at time $1$ by
\[
X_s^{\text{exc}} = (d_1 - g_1)^{\frac{-1}{\alpha}} (X_{g_1 + s(d_1 - g_1)} - X_{g_1})
\]
for every $s \in [0,1]$. $X^{\text{exc}}$ is almost surely an $\alpha$-stable \cadlag function on $[0,1]$ with $X^{\text{exc}}(s)>0$ for all $s \in (0,1)$, and $X_0^{\text{exc}}=X_1^{\text{exc}}=0$.

\subsubsection{\Ito excursion measure}\label{sctn:Ito exc}
We can alternatively define $\X$ using the {\Ito excursion measure}. For full details, see \cite[Chapter IV]{BertoinLevy}, but the measure is defined by applying excursion theory to the process $X - \underline{X}$, which is strongly Markov and for which the point $0$ is regular for itself. We normalise local time so that $-\underline{X}$ denotes the local time of $X - \underline{X}$ at its infimum, and let $(g_j, d_j)_{j \in \mathcal{I}}$ denote the excursion intervals of $X - \underline{X}$ away from zero. For each $i \in \mathcal{I}$, the process $(e^i)_{0 \leq s \leq d_i-g_i}$ defined by $e^i(s) = X_{g_i + s} - X_{g_i}$ is an element of the excursion space
\[
E = \{ e \in D([0, \infty), [0, \infty)): e(0)=0, \zeta(e):=\sup\{s>0: e(s)>0\} \in (0, \infty), e(t) > 0 \text{ for all } t \in (0, \zeta (e)) \}.
\]
$\zeta (e)$ is the \textit{lifetime} of the excursion $e$. It was shown in \cite{ItoPP} that the measure
\[
N(dt, de) = \sum_{i \in \mathcal{I}} \delta (-\underline{X}_{g_i}, e^i)
\]
is a Poisson point measure of intensity $dt N(de)$, where $N$ is a $\sigma$-finite measure on the set $E$ known as the \textit{\Ito excursion measure}.

Moreover, the measure $N(\cdot)$ inherits a scaling property from the $\alpha$-stability of $X$. Indeed, for any $\lambda > 0$ we define a mapping  $\Phi_{\lambda}: E \rightarrow E$ by  $\Phi_{\lambda}(e)(t) = \lambda^{\frac{1}{\alpha}} e(\frac{t}{\lambda})$, so that $N \circ \Phi_{\lambda}^{-1} = \lambda^{\frac{1}{\alpha}} N$ (e.g. see \cite{WataIto}). It then follows from the results in \cite[Section IV.4]{BertoinLevy} that we can uniquely define a set of conditional measures $(N_{(s)}, s>0)$ on $E$ such that:
\begin{enumerate}[(i)]
\item For every $s > 0$, $N_{(s)}( \zeta=s)=1$.
\item For every $\lambda > 0$ and every $s>0$, $\Phi_{\lambda}(N_{(s)}) = N_{(\lambda s)}$.
\item For every measurable $A \subset E$
\[
N(A) = \int_0^{\infty} \frac{N_{(s)}(A)}{\alpha \Gamma(1 - \frac{1}{\alpha}) s^{\frac{1}{\alpha}+1}} ds.
\]
\end{enumerate}

$N_{(s)}$ is therefore used to denote the law $N( \cdot | \zeta = s)$. The probability distribution $N_{(1)}$ coincides with the law of $X^{\exc}$ as constructed above.

\subsubsection{Relation between $X$ and $X^{\exc}$}
Throughout this paper we will use the following two tools to compare the probability of an event defined in terms of $\X$ to that of the same event defined in terms of $X$.

\begin{theorem}\cite[Th\'eor\`eme 4]{Chaumont}.\label{thm:Vervaat} Vervaat Transform.
\begin{enumerate}
\item Let $\X$ be as above, and take $U \sim$ \textsf{Uniform}$([0,1])$. Then the process $(\Xb_t)_{0 \leq t \leq 1}$ defined by
\[
\Xb_t = \begin{cases} \X_{U+t} & \text{ if } U+t \leq 1,\\
\X_{U+t-1} & \text{ if } U+t > 1.
\end{cases}
\]
has the law of a spectrally positive stable \Levy bridge on $[0,1]$.
\item Now let $\Xb$ be a spectrally positive stable \Levy bridge on $[0,1]$, and let $m$ be the (almost surely unique) time at which it attains its minimum. Define an excursion $\X$ by
\[
\X_t = \begin{cases} \Xb_{m+t} & \text{ if } m+t \leq 1,\\
\Xb_{m+t-1} & \text{ if } m+t > 1.
\end{cases}
\]
\end{enumerate}
Then $\X$ has the law of a spectrally positive stable \Levy excursion.
\end{theorem}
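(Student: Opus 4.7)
The plan is to establish Vervaat's identity by reducing to a discrete combinatorial fact --- the cycle lemma --- and passing to the scaling limit. The continuous statement is nontrivial only in that one must check continuity of shift maps in the Skorohod topology; the underlying combinatorics is elementary.

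First I would prove the discrete analog. Let $(S_k)_{0 \le k \le n}$ be a random walk with i.i.d.\ increments in the domain of attraction of the $\alpha$-stable law, conditioned on $S_n = 0$; this is a discrete bridge. Assume its minimum is uniquely attained at some time $m$. The \emph{cycle lemma} of Dvoretzky--Motzkin says that of the $n$ cyclic rotations of $S$, exactly one is non-negative, namely the rotation placing $m$ at the origin. This gives the discrete analog of part (2): shifting a bridge by its minimum time yields a discrete excursion. For the discrete analog of part (1), the cyclic shift by a uniform index $U \in \{0, \ldots, n-1\}$ is inverse to this operation up to the randomness in $U$; exchangeability of the bridge law under cyclic rotations then ensures the output has the bridge distribution.

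Second I would take the scaling limit. Under the rescaling $n^{-1/\alpha} S_{\lfloor nt \rfloor}$, discrete bridges converge to $\Xb$ and discrete excursions converge to $\X$ in the Skorohod $J_1$ topology; this is classical for random walks in the domain of attraction of an $\alpha$-stable law. The cyclic shift operator $\tau_u(f)(t) = f((u+t) \bmod 1) - f(u)$ is continuous at c\`adl\`ag paths $f$ that do not jump at time $u$, and the shift-by-minimum operator is continuous at paths with a unique infimum attained at a point of left-continuity. Both $\X$ and $\Xb$ satisfy these conditions almost surely, so the identities pass to the limit.

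The main technical obstacle is verifying these continuity properties. For part (1), one needs the uniform shift $U$ to almost surely avoid the countable jump set of $\X$, which is immediate by Fubini since $U$ is independent of $\X$. For part (2), one needs that $\Xb$ attains its minimum uniquely at a point of left-continuity --- a classical property of spectrally positive \Levy bridges, since the absence of negative jumps forces the infimum to be reached continuously from above, while uniqueness follows from the absolute continuity of the marginal laws of $\Xb_s$ for $s \in (0,1)$. An alternative route would avoid the discrete approximation entirely and work directly with the It\^o excursion measure $N_{(1)}$ using the results of Section \ref{sctn:Ito exc}, comparing its image under the shift-by-minimum map with the bridge law; the key ingredient there is that the minimum time of the bridge is uniform on $[0,1]$ and independent of the excursion profile obtained by re-centering, which itself can be extracted from the Poisson structure of $N(dt, de)$.
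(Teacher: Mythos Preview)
The paper does not prove this theorem; it is quoted verbatim from \cite{Chaumont} (Th\'eor\`eme~4) as a black-box input to the later arguments, with no proof supplied. There is therefore no ``paper's own proof'' to compare your proposal against.

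That said, your sketch is a reasonable outline of one standard route to Vervaat-type identities: the cycle lemma gives the discrete statement, and one passes to the limit via convergence of conditioned random walks to the stable bridge and excursion. The technical points you flag --- Skorohod-continuity of the cyclic shift away from jump times, and continuity of the shift-by-minimum map at paths with a unique, left-continuously-attained infimum --- are the right ones, and your justifications for why they hold almost surely are correct. One caveat: the convergence of the \emph{conditioned} random walks (bridge and excursion) to their stable continuum limits is itself a nontrivial input that you are calling ``classical''; in the heavy-tailed case this requires real work (absolute continuity plus local limit estimates, or the results of Chaumont and Caravenna--Chaumont) and is not as routine as the unconditioned invariance principle. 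Chaumont's own proof in the cited reference proceeds directly in the continuum via path decompositions of the L\'evy process and does not go through a discrete approximation, so your approach is genuinely different from the original source as well.
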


An event defined for the stable bridge on the interval $[0,T]$ can then be transferred to the unconditioned process using the fact that the law of the bridge is absolutely continuous with respect to the law of the process, with Radon-Nikodym derivative
\begin{equation}\label{eqn:abs cont RN deriv Levy bridge}
\frac{p_{1-{T}}(-X_{T})}{p_{1}(0)}
\end{equation}
for $T \in (0,1)$ (see \cite[Section VIII.3, Equation (8)]{BertoinLevy}).

Here $p_t(x)$ is the transition density of the \Levy process $X$ defined above with respect to Lebesgue measure on $\R$, i.e. a measurable function on $\R$ such that
\[
\estartb{f(X_t)}{0} = \int_{-\infty}^{\infty} f(y) p_t(y) dy
\]
for all bounded, measurable functions $f$ on $\R$.

\subsection{Two Parameter Poisson-Dirichlet Distribution}
We now introduce the two parameter Poisson-Dirichlet distribution, denoted \PD($\beta, \theta$), which arises naturally in the context of decompositions of random trees (amongst other things). It is a law on countable partitions of the interval $[0,1]$. We will denote such a partition by $(M_1, M_2, \ldots)$. Here we outline the GEM (Griffiths, Engen, McCloskey) construction, which gives a size-biased ordering of the \PD($\beta, \theta$) distribution via a residual allocation model. For further background see \cite{PitYor2PD}.

\begin{prop}\cite[Proposition 2]{PitYor2PD}.
For $0 \leq \beta < 1$, and $\theta > -\beta$, let $(Z_n)_{n \geq 1}$ be a sequence of independent random variables with 
\[
Z_n \sim \textsf{Beta} (1 - \beta, n \beta + \theta)
\]
for each $n \geq 1$. Define a sequence of random variables $(M_n)_{n \geq 1}$ by
\begin{align*}
M_1 = Z_1, \ \ \ M_2 = (1-Z_1)Z_2, \ \ \ \ldots, \ \ \ M_n = (1-Z_1)(1-Z_2) \ldots (1-Z_{n-1}) Z_n
\end{align*}
for all $n \geq 1$.  Then $\sum_n M_n = 1$ almost surely, and the random vector $(M_1, M_2, \ldots)$ is distributed as a size-biased ordering of \PD($\beta, \theta$).
\end{prop}

In Section \ref{sctn:vol bounds} we will use the following two results.

\begin{lem}\label{lem:Gem correlations}
Let $(M_1, M_2, \ldots)$ be as above, and let $(g(n))_{n \geq 1}$ be any sequence of numbers taking values in $[0,1]^{\N}$. Then 
\begin{align*}
\prcond{ M_n \geq g(n)}{M_l < g(l) \forall l < n}{} \geq \pr{M_n \geq g(n)}.
\end{align*}
\begin{proof}
This is immediate on noting that $M_n = (1 - \sum_{i=1}^{n-1} M_i) Z_n$.
\end{proof}
\end{lem}

\begin{lem}\label{lem:Gem paley zig}
Let $(M_1, M_2, \ldots)$ be \textsf{PD}($\alpha^{-1}, 1 - \alpha^{-1}$), where $\alpha \in (1,2)$. Then there exists a constant $c > 0$ such that for any $c' \in (0,1)$ we have:
\begin{align*}
\pr{M_{k} \geq c' k^{-\alpha}} \geq c(1-c')^2.
\end{align*}
\begin{proof}
The proof is an application of the Paley-Zigmund inequality, which states that for any non-negative random variable $X$ with finite variance, and any $\theta \in [0,1]$,
\[
\pr{X \geq \theta \E{X}} \geq (1-\theta)^2 \frac{\E{X}^2}{\E{X^2}}.
\]
By taking $X = M_k$, and using the independence of the $(Z_n)_{n \geq 1}$ we have that there exists $c, k_0 < \infty$ such that, whenever $k \geq k_0$,
\begin{align*}
\E{M_{k}} &=\E{Z_k} \Big( \prod_{i=1}^{k-1} \E{1-Z_i} \Big) \geq \frac{1 - \alpha^{-1}}{2+(k-2)\alpha^{-1}} \prod_{i=1}^{k-1} \frac{1 + (i-1) \alpha^{-1}}{2+(i-2)\alpha^{-1}} \geq \big(\frac{3}{2}\big)^{\alpha} k^{-\alpha}, \\
\E{M_{k}^2} &=\E{Z_k^2} \Big( \prod_{i=1}^{k-1} \E{(1-Z_i)^2} \Big) = \frac{\alpha - 1}{(3\alpha+k-2)(2 \alpha + k-2)} \prod_{i=1}^{k-1} \frac{2 \alpha + i - 1}{3 \alpha + i-2} \frac{\alpha + i -1}{2\alpha + i - 2} \leq ck^{-2\alpha}.
\end{align*}
The result follows.
\end{proof}
\end{lem}

\section{Background on Stable Trees and Looptrees}\label{sctn:tree looptree def}
\subsection{Discrete Trees}\label{sctn:trees background discrete}
Before defining stable trees and looptrees, we briefly recap the Ulam-Harris labelling notation for discrete trees, following the formalism of \cite{Neveu}. Firstly, introduce the set
\[
\mathcal{U}=\cup_{n=0}^{\infty} {\N}^n.
\]
By convention, ${\N}^0=\{ \emptyset \}$. If $u=(u_1, \ldots, u_n)$ and $v=(v_1, \ldots, v_m) \in \mathcal{U}$, we let $uv= (u_1, \ldots, u_n, v_1, \ldots, v_m)$ be the concatenation of $u$ and $v$.

\begin{defn} A plane tree $T$ is a finite subset of $\mathcal{U}$ such that
\begin{enumerate}[(i)]
\item $\emptyset \in T$,
\item If $v \in T$ and $v=uj$ for some $j \in \N$, then $u \in T$,
\item For every $u \in T$, there exists a number $k_u(T) \geq 0$ such that $uj \in T$ if and only if $1 \leq j \leq k_u(T)$.
\end{enumerate}
\end{defn}

We let $\mathbb{T}$ denote the set of all plane trees. A plane tree $T \in \mathbb{T}$ with $n+1$ vertices labelled according to the lexicographical order as $u_0, u_1, \ldots, u_n$ can be coded by its \textit{height function}, \textit{contour function}, or \textit{Lukasiewicz path}, defined as follows.
\begin{itemize}
\item The height function $(H^{T}_m)_{0 \leq m \leq n}$ is defined by considering the vertices $u_0, u_1, \ldots, u_n$ in lexicographical order, and then setting $H^{T}_i$ to be the generation of vertex $u_i$.
\item The contour function $(C^{T}_t)_{0 \leq t \leq 2n}$ is defined by considering a particle that starts at the root $\emptyset$ at time zero, and then continuously traverses the boundary of ${T}$ at speed one, respecting the lexicographical order where possible, until returning to the root. $C^{T}(t)$ is equal to the height of the particle at time $t$.
\item The Lukasiewicz path $(W^{T}_m)_{0 \leq m \leq n}$ is defined by setting $W^{T}_0 = 0$, then by considering the vertices $u_0, u_1, \ldots, u_n$ in lexicographical order and setting $W^T_{m+1} = W^T_m + k_{u_m}(T)-1$.
\end{itemize}

These are illustrated in Figure \ref{fig:contourheightfns}, together with points corresponding to specific vertices in the tree, and the part of each excursion coding the subtree rooted at the red vertex, which we denote by $\tau_1(T)$. For further details, see \cite[Section 0.1]{LeGDuqMono}.

\begin{figure}[h]
\includegraphics[width=15cm, height=5.6cm]{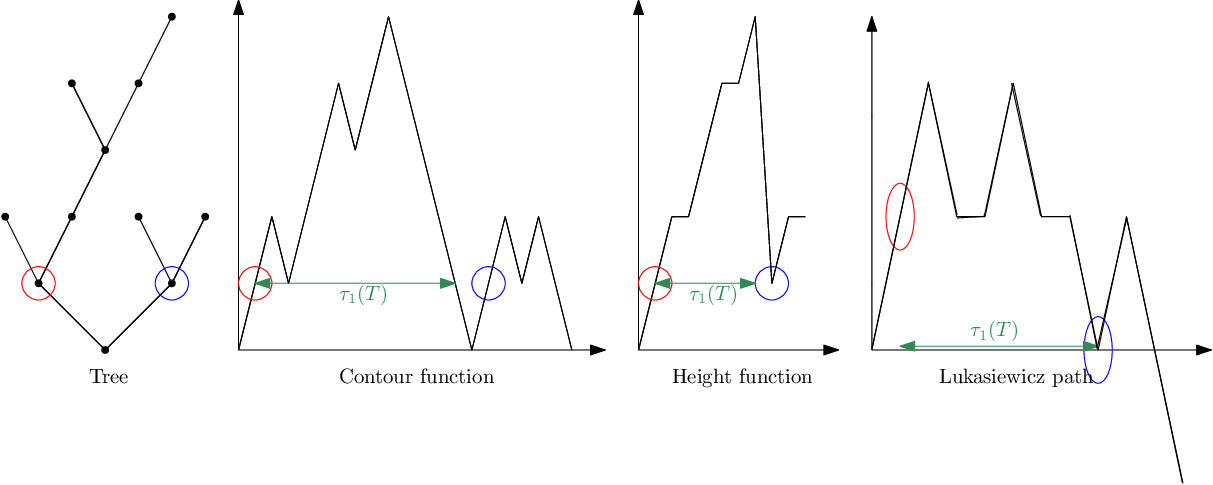}
\centering
\caption{Example of contour function, height function and Lukasiewicz path for the given tree.}\label{fig:contourheightfns}
\end{figure}

These functions all uniquely define the tree $T$. This can be written particularly conveniently in the case of the contour function, since for any $s, t \in \{0, \ldots, 2(n-1)\}$, we can write the tree distance as a function on $\{0, \ldots, 2(n-1)\} \times \{0, \ldots, 2(n-1)\}$ by setting
\[
d^T(s,t) = C^T(s) + C^T(t) - 2\inf_{s \leq r \leq t} C^T(r).
\]

We will work mainly with the Lukasiewicz path $(W^{T}_m)_{0 \leq m \leq n}$ in this paper. It is not too hard to see that $W^{T}_m \geq 0$ for all $0 \leq m \leq n-1$, and $W^T_n = -1$. Moreover, the height function can be defined as a function of the Lukasiewicz path (see \cite[Equation (1)]{LeGDuqMono}) by setting
\begin{equation}\label{eqn:height Luk def}
H^T(m) = \Big|\Big\{ k \in \{0, 1, \ldots, m-1\}: W^T_k = \inf_{k \leq l \leq m} W^T_l \Big\}\Big|.
\end{equation}

\subsection{Stable trees}
We now introduce stable trees. These are closely related to stable looptrees, and were introduced by Le Gall and Le Jan in \cite{LeGLeJanExploration} then further developed by Duquesne and Le Gall in \cite{LeGDuqMono,DuqLeGPFALT}. For $\alpha \in (1,2)$ we define the stable tree $\Ta$ from a spectrally positive $\alpha$-stable \Levy excursion, which plays the role of the Lukasiewicz path introduced above. By analogy with (\ref{eqn:height Luk def}), given such an excursion $\X$, we define the height function $\HH$ to be the continuous modification of the process satisfying
\begin{equation*}\label{eqn:height def}
\HH(t) = \lim_{\epsilon \rightarrow 0} \frac{1}{\epsilon} \int_0^t \mathbbm{1} \{X^{\exc}_s < I_s^t + \epsilon \} ds,
\end{equation*}
where $I_s^t = \inf_{r \in [s,t]} \X_r$ for $s \leq t$, and the limit exists in probability (e.g. see \cite[Lemma 1.1.3]{LeGDuqMono}). We define a distance function on $[0,1]$ by
\[
d(s,t) = \HH(s) + \HH(t) - 2 \inf_{s \leq r \leq t} \HH(r),
\]
and an equivalence relation on $[0,1]$ by setting $s \sim t$ if and only if $d(s,t) = 0$. $\T_{\alpha}$ is the quotient space $([0,1]/ \sim, d)$, and we let $\pi$ denote the canonical projection from $[0,1]$ to $\Ta$. If $u, v \in \Ta$, we let $[[u,v]]$ denote the unique geodesic between $u$ and $v$ in $\Ta$.

This construction also provides a natural way to define a measure $\mu$ on $\T_{\alpha}$ as the image of Lebesgue measure on $[0,1]$ under the quotient operation.

Stable trees arise naturally as scaling limits of discrete plane trees with appropriate offspring distributions. More specifically, let $T_n$ be a discrete tree conditioned to have $n$ vertices and with critical offspring distribution $\xi$ in the domain of attraction of an $\alpha$-stable law, and such that $\xi$ is aperiodic. It is shown in \cite[Theorem 3.1]{DuqContourLimit} that
\begin{equation}\label{eqn:stable tree scaling limit def}
a_n n^{-1} T_n \rightarrow \T_{\alpha}
\end{equation}
in the Gromov-Hausdorff topology as $n \rightarrow \infty$, where $a_n$ is as defined in (\ref{eqn:dom of att def}).

\subsection{Random Looptrees}\label{sctn:looptree def}
Discrete looptrees are best described by Figure \ref{fig:disc looptree intro} in the introduction. Moreover, as outlined there, stable looptrees can be defined as scaling limits of discrete looptrees: $T_n$ is a Galton Watson tree conditioned to have $n$ vertices with critical offspring distribution $\xi$ in the domain of attraction of an $\alpha$-stable law, then
\[
a_n^{-1} {\Loop}(T_n) \overset{(d)}{\rightarrow} \L_{\alpha}
\]
with respect to the Gromov-Hausdorff topology as $n \rightarrow \infty$ \cite[Theorem 4.1]{RSLTCurKort}. By comparison with (\ref{eqn:stable tree scaling limit def}), $\L_{\alpha}$ can therefore be thought of as the looptree version of the \Levy tree $\T_{\alpha}$. We now explain how this intuition can be used to code $\La$ from a stable \Levy excursion, in such a way that $\La$ can be heuristically obtained from the corresponding stable tree $\T_{\alpha}$ by replacing each branch point by a loop with length proportional to the size of the branch point, and gluing these loops together along the tree structure of $\T_{\alpha}$.

We first define some notation. As is standard, for every $s, t \in [0,1]$ we write $s \preceq t$ if and only if $s \leq t$ and $\X_{s^-} \leq \inf_{[s,t]}\X$, and in this case we set
\begin{align*}
x_s^t(\X) = \inf_{[s,t]}\X - \X_{s^-}, \text{     and      } u_s^t(\X) = \frac{x_s^t(\X)}{\Delta \X_s}.
\end{align*}
In what follows we will drop the dependence on $\X$ and merely write $x_s^t, u_s^t$ etc.

The following construction was introduced in \cite[Section 2.3]{RSLTCurKort}. The \Levy excursion itself plays the role of a continuum Lukasiewicz path. It was shown in \cite[Proposition 2]{MiermontSplittingNodes} that if we define the width of a branch point in $\T_{\alpha}$, coded by a jump at $t \in [0,1]$, by 
\[
\lim_{\epsilon \downarrow 0} \frac{1}{\epsilon} \mu (\{v \in \Ta, d(\pi(t), v) \leq \epsilon\}),
\]
then the limit almost surely exists and is equal to $\Delta_t$. It is therefore natural that a jump of size $\Delta$ in $\X$ should code a ``loop" of length $\Delta$ in $\La$.

Accordingly, for every $t \in [0,1]$ with $\Delta_t > 0$, the authors in \cite[Section 2.3]{RSLTCurKort} equip the segment $[0, \Delta_t]$ with the pseudodistance
\begin{equation}\label{eqn:delta def}
\delta_t(a,b) = \min \{|a-b|,(\Delta_t - |a-b|) \}, \ \ \ \ \ \ \ \ \ \text{for} \ a, b \in [0, \Delta_t],
\end{equation}
and define a distance function on $[0,1]$ by first setting 
\begin{equation*}
d_0(s,t) = \sum_{s \prec u \preceq t} \delta_u(0, x_u^t)
\end{equation*}
whenever $s \preceq t$, and
\begin{equation} \label{eqn:d}
d(s,t) = \delta_{s \wedge t}(x_{s \wedge t}^s,x_{s \wedge t}^t) + d_0(s \wedge t, s) + d_0(s \wedge t, t)
\end{equation}
for arbitrary $s, t \in [0,1]$. They show that $d$ as defined above is almost surely a continuous pseudodistance on $[0,1]$, and then define an equivalence relation $\sim$ on $[0, 1]$ by setting $s \sim t$ if $d(s,t)=0$, and in \cite[Definition 2.3]{RSLTCurKort} define the stable looptree $\La$ as the quotient space
\[
\mathcal{L}_{\alpha} = ([0,1]/ \sim, d)
\]
We let $p:[0,1] \rightarrow \La$ denote the canonical projection under the quotient operation, and let $\nu$ denote the image of Lebesgue measure on $[0,1]$ under $p$, so that $\nu$ is the natural analogue of uniform measure on $\La$.

At various points in this paper, we will refer to the ``corresponding" or ``underlying" stable tree of $\La$, by which we mean the stable tree $\Ta$ coded by the same excursion that codes $\La$. We let $\La$ denote a compact stable looptree conditioned on $\nu(\La)=1$, but at various points we will let $\tilde{\La}$ denote a generic stable looptree coded by an excursion under the \Ito measure but without any conditioning on its total mass. We will also let $\La^{1}$ denote a stable looptree but conditioned so that its underlying tree has height $1$. We will however make any conditioning explicit at the time of writing.

\subsubsection{Re-rooting Invariance for Stable Trees and Looptrees}

In \cite{DuqLeGPFALT}, Duquesne and Le Gall prove that stable \Levy trees are invariant under uniform rerooting. More formally, if $U$ is a uniform point in $[0,1]$, and we define a new height function $H^{[U]}:[0,1] \rightarrow \R$ from the original height function $\HH$ by
\[
H^{[U]}(x) = \begin{cases} \HH(U) + \HH(U+x) - 2 \min_{U \leq s \leq U+x} \HH(s) & \text{ if } U+x \leq 1 \\
\HH(U) + \HH(U+x-1) - 2 \min_{U+x-1 \leq s \leq U} \HH(s) & \text{ if } U+x > 1 ,
\end{cases}
\]
then $H^{[U]} \overset{(d)}{=} \HH$. This property is just saying that if we pick a uniform point $U \in [0,1]$, and reroot the tree $\T_{\alpha}$ at $\pi(U)$, then the resulting tree has the same distribution as the original one.

We will prove most of our looptree volume results by considering the volume of a ball at a uniform point in $\La$, and then extending to almost all of $\La$ by Fubini's theorem. To prove the upper bounds, we will apply some spinal decomposition results for stable trees that we outline in the next section. The uniform rerooting invariance result means that we can equivalently consider our uniform point to correspond to the root of the stable tree.

Note that the problem of uniform rerooting invariance of continuum fragmentation trees was also considered in the paper \cite{HPWSpinPart}, where the authors additionally show that stable trees are the only fragmentation trees for which this property holds. Duquesne and Le Gall also prove a similar result for rerooting at a deterministic point $u \in [0,1]$ in the paper \cite{DuqLeGRerooting}, and \cite[Remark 4.6]{RSLTCurKort} directly addresses the question of uniform rerooting invariance for stable looptrees.

\section{Resistance and Random Walk Scaling Limit Results}\label{sctn:limit results}

In this section we construct a resistance metric on $\La$, define Brownian motion, and prove Theorem \ref{thm:main RW inv princ compact quenched}.

\subsection{Construction of a Resistance Metric on Stable Looptrees}\label{sctn:res RWs looptrees}
The metric is similar in spirit to the metric of \cite{RSLTCurKort} that we introduced in Section \ref{sctn:looptree def}, but we will sum the effective resistance across loops rather than the shortest-path distance. It turns out that these resistance looptrees are homeomorphic to the original ones, which means that the shortest distance metric can equivalently be used to prove the volume bounds of Theorems \ref{thm:main global} and \ref{thm:main local}, making the problem more tractable. Additionally this means that part of the invariance principle of Theorem \ref{thm:compact disc inv princ res} arises as a direct consequence of \cite[Theorem 4.1]{RSLTCurKort}.

In the continuum, again let $\X$ be as in Section \ref{sctn:Levy background}. This time, if $\X$ has a jump of size $\Delta_t>0$ at point $t$, equip the segment $[0, \Delta_t]$ with the pseudodistance
\begin{equation}\label{eqn:r def}
r_t(a,b) = {\Big( \frac{1}{|a-b|}+\frac{1}{\Delta_t - |a-b|} \Big) }^{-1}= \frac{|a-b|(\Delta_t - |a-b|)}{\Delta_t}, \ \ \ \ \ \ \ \ \ \text{for} \ a, b \in [0, \Delta_t].
\end{equation}
The quantity $r_t$ gives the resistance across the loop associated to the branch point at $t$. Note that $r_t(a,b)$ corresponds to the effective resistance of two parallel edges of resistance $|a-b|$ and $\Delta_t - |a-b|$, and by Rayleigh's Monotonicity Principle it follows that $r_t(a,b) \leq \delta_t(a,b)$ (also see Lemma \ref{lem:dR compare}).

By analogy with expression (\ref{eqn:delta def}) in Section \ref{sctn:looptree def}, for $s, t \in [0,1]$ with $s \preceq t$ we set
\begin{equation}\label{eqn:R0}
R_0(s,t) = \sum_{s \prec u \preceq t} r_u(0, x_u^t).
\end{equation}
For arbitrary $s, t \in [0,1]$, we set 
\begin{equation} \label{eqn:R}
R(s,t) = r_{s \wedge t}(x_{s \wedge t}^s,x_{s \wedge t}^t) + R_0(s \wedge t, s) + R_0(s \wedge t, t).
\end{equation}

We give a comparison between $d$ and $R$ in the following lemma.

\begin{lem}\label{lem:dR compare}
For any $s,t \in [0,1]$, we have
\[
\frac{1}{2}d(s,t) \leq R(s,t) \leq d(s,t).
\]
\begin{proof}
Note that, trivially, for any $x, y \in [0,1]$:
\[
{\Bigg(\frac{2}{\text{min} \{x, y \}} \Bigg)}^{-1} \leq {\Bigg( \frac{1}{x} + \frac{1}{y} \Bigg) }^{-1} \leq {\Bigg(\frac{1}{\text{min} \{x, y \}} \Bigg)}^{-1}.
\]
Taking $x = |a-b|, y = \Delta_t - |a-b|$ we obtain $\frac{1}{2} \delta_t(a,b) \leq r_t(a,b) \leq \delta_t(a,b)$ for all $t \in [0,1]$, $a, b \in [0, \Delta_t]$.
\end{proof}
\end{lem}

It therefore follows from the corresponding result for $d$ given in \cite[Proposition 2.2]{RSLTCurKort} that almost surely, the function $R(\cdot, \cdot): [0,1]^2 \rightarrow {\R}_+$ is a continuous pseudodistance, and so we can make the following definition. Note in particular that $d(s,t) = 0$ if and only if $R(s,t) = 0$.

\begin{defn}
Let $X$ be an $\alpha$-stable \Levy excursion. The corresponding $\alpha$-stable resistance looptree is defined to be the quotient metric space
\[
\mathcal{L}_{\alpha}^R = ([0,1]/ \sim, R).
\]
\end{defn}

At several points we will write $\La^R$ as $(\La, R)$, to emphasise how it fits into the framework of \cite{DavidResForms} and various other articles. The next corollary follows directly from Lemma \ref{lem:dR compare}.

\begin{cor}\label{cor:homeo}
The looptrees $\mathcal{L}_\alpha$ and $\mathcal{L}^R_{\alpha}$ are homeomorphic.
\end{cor}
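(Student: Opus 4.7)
The plan is extremely short, since Lemma \ref{lem:dR compare} already does essentially all the work; only a little set-theoretic bookkeeping remains.

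First I would observe that the lemma gives the two-sided comparison $\tfrac12 d(s,t) \le R(s,t) \le d(s,t)$ for all $s,t \in [0,1]$. In particular, $d(s,t) = 0$ if and only if $R(s,t) = 0$, so the two equivalence relations on $[0,1]$ used to form the quotients $\mathcal{L}_\alpha = ([0,1]/\!\sim_d, d)$ and $\mathcal{L}_\alpha^R = ([0,1]/\!\sim_R, R)$ are identical. Consequently the quotient sets coincide, and the identity map on $[0,1]$ descends to a well-defined bijection $\iota: \mathcal{L}_\alpha \to \mathcal{L}_\alpha^R$ (which is exactly the map sending $p(t) \in \La$ to the $R$-equivalence class of $t$).

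Next I would upgrade this to a topological statement. The inequality $R \le d$ shows that $\iota$ is $1$-Lipschitz, hence continuous; the inequality $d \le 2R$ shows that $\iota^{-1}$ is $2$-Lipschitz, hence continuous. Therefore $\iota$ is a homeomorphism (in fact a bi-Lipschitz equivalence). This is the entire proof.

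There is no real obstacle here, since the hard analytic content, namely that $R$ is a genuine continuous pseudodistance and that the two quotient operations produce Hausdorff metric spaces, is inherited from \cite[Proposition 2.2]{RSLTCurKort} via Lemma \ref{lem:dR compare}, as the author already remarks in the paragraph preceding the definition of $\La^R$. If one preferred not to invoke the reverse Lipschitz bound, one could instead use compactness of $(\La,d)$, proved in \cite{RSLTCurKort}, together with Hausdorffness of $(\La^R,R)$ and the fact that a continuous bijection from a compact space to a Hausdorff space is automatically a homeomorphism; but the two-sided Lipschitz argument is more direct and also records the stronger fact that $d$ and $R$ are Lipschitz equivalent, which will be convenient for transferring volume bounds between the two metrics in later sections.
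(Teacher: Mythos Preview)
Your proof is correct and follows exactly the approach the paper intends: the paper simply states that the corollary ``follows directly from Lemma \ref{lem:dR compare}'' without spelling out details, and your argument (same quotient set from equivalence of the zero sets, then bi-Lipschitz hence homeomorphism) is precisely the natural unpacking of that sentence.
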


\begin{prop}\label{prop:res metric}
$R$ is a resistance metric in the sense of Definition \ref{def:eff resistance metric}.
\end{prop}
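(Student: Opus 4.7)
The strategy is to exhibit $R|_{E\times E}$, for any finite $E \subset \La^R$, as a pointwise limit of effective resistance matrices on a sequence of \emph{finite} electrical networks obtained by truncating the small loops of $\La^R$, and then invoke the closure of the class of effective resistance matrices on a fixed vertex set under such limits. This reduces the problem to an explicit series/parallel computation on finite networks.

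For each $\epsilon > 0$, let $J_\epsilon = \{u \in [0,1] : \Delta_u > \epsilon\}$; by the c\`adl\`ag property of $\X$, this set is almost surely finite. Gluing the finitely many loops indexed by $J_\epsilon$ along the underlying tree structure of $\X$ (contracting all jumps of size $\leq \epsilon$) yields a compact finite metric graph $\La^{R,\epsilon}$ which is a union of finitely many circles attached along a finite tree. Viewing each loop of length $\Delta_u$ as two parallel arcs whose lengths sum to $\Delta_u$, $\La^{R,\epsilon}$ is a genuine finite electrical network, and the series--parallel laws combined with (\ref{eqn:r def}) yield an explicit formula for the effective resistance $R^\epsilon(s,t)$ between any two points; namely, the truncation of (\ref{eqn:R}) obtained by restricting each sum to jump times in $J_\epsilon$. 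Crucially, sublooptrees attached to the $s$-to-$t$ geodesic at a single point act as dead ends in the network and therefore contribute nothing to $R^\epsilon(s,t)$; this follows from Rayleigh's monotonicity principle.

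Next, I would show that $R^\epsilon(s,t) \to R(s,t)$ as $\epsilon \downarrow 0$ for each fixed $s, t \in [0,1]$. Since $R(s,t) \leq d(s,t) < \infty$ by Lemma \ref{lem:dR compare} and \cite[Proposition 2.2]{RSLTCurKort}, the series defining $R(s,t)$ in (\ref{eqn:R0})--(\ref{eqn:R}) converges absolutely. The difference $R(s,t) - R^\epsilon(s,t)$ is therefore a tail of this convergent series indexed by jumps of size $\leq \epsilon$, and so vanishes as $\epsilon \downarrow 0$.

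Now fix a finite $E = \{p(t_1), \ldots, p(t_n)\} \subset \La^R$. For all sufficiently small $\epsilon$, the entries $R^\epsilon(t_i,t_j)$ are close enough to $R(t_i,t_j) > 0$ that the matrix $\big(R^\epsilon(t_i,t_j)\big)_{i,j}$ is a bona fide metric on $E$, and it is by construction the effective resistance matrix on the finite weighted graph $\La^{R,\epsilon}$ with the $t_i$ as distinguished vertices. By the previous paragraph, these matrices converge entrywise to $\big(R(t_i,t_j)\big)_{i,j}$. I would finish by appealing to the standard closure property of effective resistance matrices: via iterated Schur complement reduction (see \cite[Chapter 2]{AOF}), any effective resistance matrix on a finite network can be reduced to one with vertex set exactly $E$, yielding a bijection with the closed cone of Laplacian matrices on $E$ (symmetric, non-positive off-diagonal entries, non-negative row sums). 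Hence the limit is itself realised as effective resistance on some finite weighted graph with vertex set containing $E$, proving that $R$ is a resistance metric.

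The main obstacle will be rigorously justifying the explicit formula for $R^\epsilon(s,t)$ asserted in the second paragraph. Although this is essentially a standard series/parallel computation, one must carefully trace through the network reductions to confirm that the contributions of sublooptrees attached to the $s$-to-$t$ geodesic at single points indeed disappear, and that the contributions of the loops traversed combine precisely into the truncation of (\ref{eqn:R}).
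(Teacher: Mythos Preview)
Your approach is correct in spirit but takes a genuinely different route from the paper's proof. You approximate $R$ by effective resistances $R^\epsilon$ on finite truncated networks and pass to the limit, invoking a closure property for resistance matrices on finite sets. The paper instead constructs, for each finite $V\subset \La$, a \emph{single} finite network $G'=(V',E')$ with $V\subset V'$ whose effective resistance already agrees \emph{exactly} with $R|_{V'}$: one draws only the finitely many loops corresponding to jump times in $V^\wedge:=V\cup\{v_i\wedge v_j\}$, and collapses each chain of small loops between consecutive big loops into a single edge whose resistance equals the full (infinite) sum of the intermediate $r_u$ contributions. A direct series/parallel computation (the paper's Lemma~\ref{lem:res equal r'}) then identifies $r'$ with $R$ on $V'$, and standard Schur-complement reduction from \cite[Chapter 2]{AOF} finishes. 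No limiting argument or closure lemma is needed.

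The weakest link in your version is the closure step. Your claimed ``bijection with the closed cone of Laplacian matrices'' is not quite right as stated: the relevant cone consists of \emph{irreducible} Laplacians, and irreducibility is not a closed condition (conductances can degenerate to zero). What \emph{is} true is that if $R^\epsilon\to R$ with $R(x,y)\in(0,\infty)$ for $x\neq y$, then any subsequential limit $L$ of the reduced Laplacians $L^\epsilon$ is irreducible, because reducibility of $L$ would force some limiting resistance to be infinite. This argument works, but it needs to be spelled out; it is not simply a consequence of the cone being closed. The paper's construction sidesteps this entirely by never taking a limit, at the cost of a more explicit bookkeeping of the tree structure of $V^\wedge$.
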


\begin{figure}[h]
\begin{subfigure}{.5\textwidth}
\includegraphics[height=6.5cm]{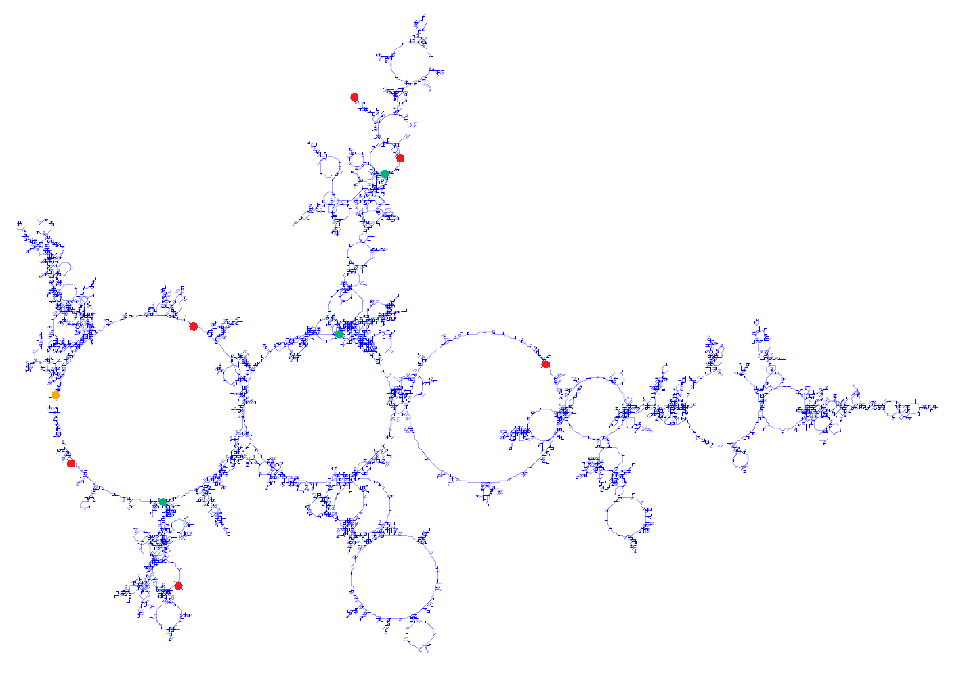}
\centering
\subcaption{Selection of points in $\La$}
\end{subfigure}
\begin{subfigure}{.5\textwidth}
\includegraphics[height=6.5cm]{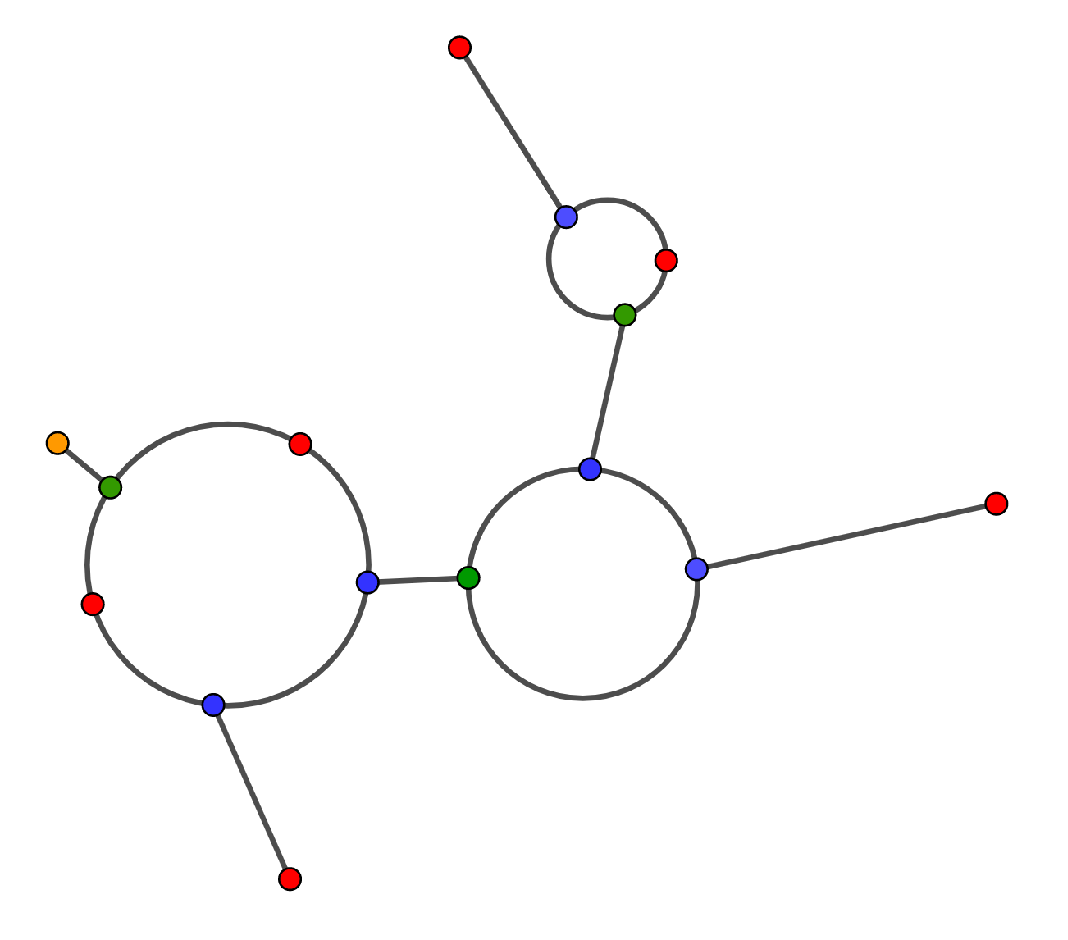}
\centering
\subcaption{$G'$}
\end{subfigure}
\caption{Resistance metric illustration.}\label{fig:resistance metric}
\end{figure}

To prove Proposition \ref{prop:res metric} we will take a finite set $V \subset \La$, define a larger graph $G' = (V', E')$ with $V \subset V'$, and show that $R=R|_{V'}$. An illustration is provided in Figure \ref{fig:resistance metric}. For notational convenience, we will represent $V$ by points in $[0,1]$ that project onto $\La$, so suppose $V = \{ v_1, v_2, \ldots, v_n \}$, where each $v_i \in [0,1]$, and $p(v_i) \neq p(v_j)$ for $i \neq j$. We will also assume that these are the minimal representatives in $[0,1]$: that is, if $v_i \in V$, then $\nexists v \in [0,1]$ with $v < v_i$ and $p(v_i) = p(v)$. We will let $p'$ denote the projection from $[0,1]$ into $V'$. It then follows from results of \cite[Section 2]{AOF} that we can reduce $V'$ to an appropriate network on $V$.

Informally, we do this in the natural way: by drawing loops corresponding to points in $V$, and joining these appropriately. In Figure \ref{fig:resistance metric}, we take $V$ to be the set of red points, and form $V^{\wedge}$ by adding the set of green points that correspond to the most recent common ancestors of pairs of points in $V$. The gold point corresponds to the root of the whole looptree, and the blue points correspond to extra vertices that we denote by expressions of the form $\rho_{u,v}$ below. Formally, we can construct our discrete picture as as follows:
\begin{enumerate}
\item \textit{Extend $V$ to include all the recent common ancestors of points in $V$.} Set
\[
V^{\wedge} = V \cup \{v_i \wedge v_j: v_i, v_j \in V \}.
\]
\item \textit{Draw loops corresponding to points in $V^{\wedge}$.} Define an equivalence relation $\sim_L$ on $V^{\wedge}$ by setting $v_i \sim_L v_j$ if and only if they have exactly the same set of strict ancestors.

We call each equivalence class a ``loop". Denote the loop corresponding to equivalence class $[v]$ by $L_v$. If the loop contains only one point in $V^{\wedge}$, and if this point is also in $V$, but $\Delta_v=0$, then it must be the case that there are no ancestors of $v$ in $V'$, so we leave it as this one point, and set $\rho_v=v$. If $\Delta_v>0$, then we instead draw a loop of length $\Delta_v$, and mark a point on it as $p'(\rho_v)$, which we will consider to be the ``base" of this loop. If the loop instead contains only one point in $V^{\wedge}$ but this point is not in $V$ then this point must correspond to a recent common ancestor of two points in $V$, and hence is a jump point of the \Levy excursion, say of size $\Delta_v$. We draw a loop of length $\Delta_v$ and mark $p'(v)$ as a point on this loop. If $L_v$ contains more than one point, then by definition of $V^{\wedge}$ it must also contain a point $s$ at which $\X$ has a jump. We denote this point by $\rho_v$ (note that $\rho_v$ does not depend on which point $v$ we choose to represent the equivalence class). We draw the loop corresponding to $[v]$ by taking all the elements $v_1, v_2, \ldots v_m \in [v]$ in order so that $v_i < v_{i+1}$ for all $i$. Note it will then be the case that $\rho_v = v_1$. Recall also that we are representing points in $V^{\wedge}$ by their minimal representative in $[0,1]$. We then draw the loop corresponding to $[v]$ by adding an edge joining $p'(v_i)$ to $p'(v_{i+1})$ of length $x_{v_1}^{v_{i}} - x_{v_1}^{v_{i+1}}$ for each $1 \leq i \leq m-1$, and an edge joining $p'(v_m)$ to $p'(v_1)$ of length $x_{v_1}^{v_{m}}$.
\item \textit{Join the loops along the tree structure.} We define a partial order on the set of loops by setting $L_u \prec L_v$ if and only if $\rho_u \prec \rho_v$ using the ancestral definition of $\prec$, where we set $\rho_v = v$ if $L_v$ contains only one point. For any two loops $L_u \prec L_v$ such that there is no $[w]$ with $L_u \prec L_w \prec L_v$, join them as follows:
\begin{enumerate}[(i)]
\item If $L_u$ and $L_v$ are both single points, then join these two points with a single edge of length $R(u,v)$.
\item If $L_u$ is a single point but $L_v$ is a loop with at least two points, then join $p'(u)$ to the point $p'(\rho_v)$ with a single edge of length $R(u, \rho_v)$.
\item If $L_u$ contains more than point, but $L_v$ contains a single point, then letting $\textsf{Anc}(w)$ denote the set of ancestors of a point $w \in [0,1]$, set $\rho_{u,v} = \inf \{ \textsf{Anc}(v) \setminus \textsf{Anc}(u) \}$, and add a point $p'(\rho_{u,v})$ to the loop at a distance $x_u^v$ from $p'(\rho_u)$, where this distance is measured in the direction that respects the lexicographical ordering of points in $L_u$. Then add an edge joining $p'(\rho_{u,v})$ to $p'(v)$ of length $R(\rho_{u,v}, v)$.
\item If both $L_u$ and $L_v$ contain more than one point, then define $\rho_{u,v}$ as above, and join $p'(\rho_{u,v})$ to $p'(\rho_v)$ by an edge of length $R(\rho_{u,v}, \rho_v)$.
\end{enumerate}
\end{enumerate}

It follows by construction that $G'$ is a connected graph and $V \subset V'$. Let $r'$ denote the effective resistance metric on this graph, which can be calculated using the series and parallel laws. We now prove the following.

\begin{lem}\label{lem:res equal r'}
\[
R|_{V'} = r'.
\]
\begin{proof}
The network $G'$ is in the form of several loops which are joined together by extra edges in such a way as to preserve the tree structure of $\L_{\alpha}$. For notational simplicity, we now relabel vertices so that $V^{\wedge} = \{v_1, v_2, \ldots, v_n \}$, $L_i = L_{v_i}$, $\rho_i = \rho_{v_i}$, and $\rho_{i,j} = \rho_{v_i, v_j}$.

Given $i,j$ such that $L_i \npreceq L_j$ and $L_j \npreceq L_i$, let $\rho_i^j = v_i \wedge v_j (= \rho_j^i)$, and let $S_{i,j} = \{ v \in V': v \preceq v_i, \rho_i^j \prec v \}$, and similarly $S_{j,i} = \{v \in V': v \preceq v_j, \rho_j^i \prec v \}$. Note that $\rho_k = k$ for all $k \in S_{i,j} \cup S_{j,i}$. Additionally, write $S_{i,j} = \{i_1, \ldots i_m \}$ and $S_{j,i} = \{j_1, \ldots, j_n \}$ in lexicographical order. We then have by construction (specifically Step 3 above) and the series law that:
\begin{align}\label{eqn:r' breakdown 1}
\begin{split}
r'(v_i, v_j) &= r'(\rho_{i_1, i_2}, \rho_{i_2}) + \Big( \sum_{l=2}^{m-1} \big( r'(\rho_{i_l}, \rho_{i_{l}, i_{l+1}}) + r'(\rho_{i_{l}, i_{l+1}}, \rho_{i_{l+1}}) \big) \Big) + r'(\rho_{i_m}, v_i) \mathbbm{1}\{v_i \neq i_m \} \\
&\ \ \ \ \ + r'(\rho_{j_1, j_2}, \rho_{j_2}) + \Big( \sum_{l=2}^{n-1} \big( r'(\rho_{j_l}, \rho_{j_{l}, j_{l+1}}) + r'(\rho_{j_{l}, j_{l+1}}, \rho_{j_{l+1}}) \big) \Big) + r'(\rho_{j_n}, v_j) \mathbbm{1}\{v_j \neq i_n \} \\
&\ \ \ \ \ + r'(\rho_{i_{1}, i_2}, \rho_{j_1, j_2}).
\end{split}
\end{align}
Then note that by definition (specifically, from Step 3 above), that
\begin{align*}
r'(\rho_{i_l}, \rho_{i_{l}, i_{l+1}}) = R(\rho_{i_l}, \rho_{i_{l}, i_{l+1}}) = r_u(0, x_{\rho_{i_l}}^{\rho_{i_l, i_{l+1}}}) = \sum_{\rho_{i_l} \preceq u \prec \rho_{i_l, i_{l+1}}} r_u(0, x_u^{v_i}),
\end{align*}
and similarly, by the series law for resistance, we have that
\[
r'(\rho_{i_{l}, i_{l+1}}, \rho_{i_{l+1}}) = R(\rho_{i_{l}, i_{l+1}}, \rho_{i_{l+1}}) = \sum_{\rho_{i_l, i_{l+1}} \preceq u \preceq \rho_{i_{l+1}}} r_u(0, x_u^{\rho_{i_{l+1}}}) = \sum_{\rho_{i_l, i_{l+1}} \preceq u \prec \rho_{i_{l+1}}} r_u(0, x_u^{v_i}),
\]
where the final line follows since $u \prec \rho_{i_{l+1}} \preceq v_i$ for all $u$ satisfying $\rho_{i_l, i_{l+1}} \preceq u \prec \rho_{i_{l+1}}$.

Similarly, $r'(\rho_{i_{1}, i_2}, \rho_{j_1, j_2}) = r_{v_i \wedge v_j}(x_{v_i \wedge v_j}^{v_i}, x_{v_i \wedge v_j}^{v_j})$.

It therefore follows from (\ref{eqn:R}), (\ref{eqn:r' breakdown 1}) and the series law that
\begin{align*}
r'(v_i,v_j) &= \sum_{l=2}^{m-1} \sum_{\rho_{i_l} \preceq u \prec \rho_{i_l, i_{l+1}}} r_u(0, x_u^{v_i}) + \sum_{l=1}^{m-1}\sum_{\rho_{i_l, i_{l+1}} \preceq u \prec \rho_{i_{l+1}}} r_u(0, x_u^{v_i}) + r_{\rho_{i_m}}(0, x_{\rho_{i_m}}^{v_i}) \mathbbm{1}\{v_i \neq i_m \} \\
&\ \ \ + \sum_{l=2}^{n-1} \sum_{\rho_{j_l} \preceq u \prec \rho_{j_l, j_{l+1}}} r_u(0, x_u^{v_j}) + \sum_{l=1}^{n-1}\sum_{\rho_{j_l, j_{l+1}} \preceq u \prec \rho_{j_{l+1}}} r_u(0, x_u^{v_j}) + r_{\rho_{j_n}}(0, x_{\rho_{j_n}}^{v_j}) \mathbbm{1}\{v_j \neq j_n \} \\
&\ \ \ +  r_{v_i \wedge v_j}(x_{v_i \wedge v_j}^{v_i}, x_{v_i \wedge v_j}^{v_j}) \\
&=\sum_{v_i \wedge v_j \prec u \preceq v_i} r_u(0, x_u^{v_i}) + \sum_{v_i \wedge v_j \prec u \preceq v_j} r_u(0, x_u^{v_j}) +  r_{v_i \wedge v_j}(x_{v_i \wedge v_j}^{v_i}, x_{v_i \wedge v_j}^{v_j}) \\
&=R(v_i, v_j).
\end{align*}
The cases when $L_i$ and $L_j$ are related differently are dealt with similarly (and more straightforwardly) to show that $r'(v_i,v_j)=R(v_i, v_j)$ for all $v_i, v_j$ in $V^{\wedge}$, thus proving the result.
\end{proof}
\end{lem}

Since $|V'| < \infty$, it then follows from \cite[Proposition 2.1.11 and Theorem 2.1.12]{AOF} that we can reduce $G'$ to a network with vertex set precisely equal to $V$, and define a metric $r$ on $G$ to be $r'|_V$, the projection of $r'$ onto $V$, and $r$ will be a resistance metric agreeing with $r'$ on $V'$. Proposition \ref{prop:res metric} then follows.

We now prove the following extension of \cite[Theorem 4.1]{RSLTCurKort}.

\begin{prop}\label{thm:compact disc inv princ res}
Let $(\tau_n)_{n=1}^{\infty}$ be a sequence of trees with $|\tau_n| \rightarrow \infty$ and corresponding Lukasiewicz paths $(W^n)_{n = 1}^{\infty}$, and let $R_n$ denote the effective resistance metric on $\Loop(\tau_n)$ obtained via (\ref{eqn:resistance def variational}) by giving each edge conductance $1$. Additionally let $\nu_n$ be the uniform measure that gives mass $1$ to each vertex of $\Loop(\tau_n)$, and let $\rho_n$ be the root of ${\Loop}(\tau_n)$. Suppose that $(C_n)_{n=1}^{\infty}$ is a sequence of positive real numbers such that  
\begin{enumerate}[(i)]
\item $\Big( \frac{1}{C_n} W^n_{\lfloor |\tau_n| t \rfloor} (\tau_n) \Big)_{0 \leq t \leq 1} \overset{(d)}{\rightarrow} \X$ as $n \rightarrow \infty$,
\item $\frac{1}{C_n} \textsf{Height}(\tau_n) \ \overset{\mathbb{P}}{\rightarrow} \ 0$ as $n \rightarrow \infty$.
\end{enumerate}
Then
\[
\Big({\Loop}(\tau_n), \frac{1}{C_n}R_n, \frac{1}{|\tau_n|} \nu_n, \rho_n \Big) \overset{(d)}{\rightarrow} \Big( \La, R, \nu, \rho \Big)
\]
as $n \rightarrow \infty$ with respect to the Gromov-Hausdorff-Prohorov topology.
\begin{proof}
As a result of Lemma \ref{lem:dR compare}, Gromov-Hausdorff convergence follows exactly as in the proof of \cite[Theorem 4.1]{RSLTCurKort} by applying the Skorohod Representation Theorem and then defining a correspondence $\mathcal{R}_n$ between $\La$ and $\Loop (\tau_n)$ to consist of all pairs of the form $(t, \lfloor \tau_n (t) \rfloor)$ or $(t, \lceil \tau_n (t) \rceil)$.

To prove that the measures also converge on this space, let $r_n = dis(\mathcal{R}_n)$, and take the Gromov-Hausdorff embedding $F_n = \Loop (\tau_n) \sqcup \La$ endowed with the metric
\[
D_{F_n}(x,y) = \begin{cases}
\frac{1}{C_n} R_n(x,y) & \text{ if } x, y \in \Loop (\tau_n) \\
R(x,y) & \text{ if } x, y \in \La \\
\inf_{u, v \in \mathcal{R}_n} (\frac{1}{C_n} R_n(x,u) + R(y,v) + \frac{1}{2} r_n) & \text{ if } x \in \Loop (\tau_n), y \in \La.
\end{cases}
\]

We claim that $d^{F_n}_P(\nu_n, \nu) \rightarrow 0$ as $n \rightarrow \infty$.
For notational convenience we will assume that $|\tau_n|=n$, and let $I_{n,i} = [\frac{i}{n} - \frac{1}{2n}, \frac{i}{n} + \frac{1}{2n}]$. Let $u_0, u_1, \ldots, u_{n-1}$ denote the lexicographical ordering of these vertices. Take a set $A_n$ of vertices in $\Loop (\tau_n)$, and let
\[
A'_n = \bigcup_{i: u_i \in A_n} I_{n,i}.
\]
Let $A_n'' = p(A'_n)$. We will show that $A_n'' \subset A_n^{r_n}$. For any $v \in A_n'', \exists \ s \in A'_n$ with $v=p(s)$ and $s \in I_{n,i}$ for some $i$ with $u_i \in A_n$. It follows that $i = \lfloor ns \rfloor$ or $\lceil ns \rceil$, and hence $(u_i, v) \in \mathcal{R}_n$ and $D_F(u_i,v) = \frac{1}{2} r_n$. It follows that $v \in A_n^{r_n}$ and $A_n'' \subset A_n^{r_n}$. Also note that $\nu_n(A_n) = \nu(A_n'')$ by construction, and so $\nu_n(A_n) \leq \nu(A_n^{r_n})$.

Similarly, take a set $B \subset \La$. We similarly show that $\nu(B) \leq \nu_n(B^{r_n})$. Let $B' = p^{-1}(B)$, and 
\[
B_n = \{ u_i \in L_n \colon \exists \ s \in B' \text{ with } s \in I_{n,i} \}.
\]
Clearly $B' \subset \bigcup_{u_i \in B_n} I_{n,i}$ and so
\[ 
\nu(B) = Leb(B') \leq \frac{|B_n|}{n} = \nu_n(B_n).
\]
If $u_i \in B_n$, then there exists $s \in B'$ with $s \in I_{n,i}$ and so $(u_i, p(s)) \in \mathcal{R}_n$. Hence $B_n \subset B^{r_n}$, so $\nu_n(B_n) \leq \nu_n(B^{r_n})$ and $\nu(B) \leq \nu_n(B^{r_n})$.

It follows that $d^{F_n}_P(\nu_n, \nu) \leq r_n$, and hence converges to zero as $n \rightarrow \infty$.
\end{proof}
\end{prop}

\subsection{Random Walk Scaling Limits}
In light of Proposition \ref{prop:res metric}, we define Brownian motion on $\L_{\alpha}$ to be the diffusion associated with $(\L_{\alpha}, R, \nu, \rho)$ as in Section \ref{sctn:res forms}. We now show that this is the scaling limit of random walks on discrete looptrees.

\begin{proof}[Proof of Theorem \ref{thm:main RW inv princ compact quenched}]
It follows from Proposition \ref{thm:compact disc inv princ res}, separability and the Skorohod Representation Theorem that there exists a probability space on which the convergence of Proposition \ref{thm:compact disc inv princ res} holds almost surely. We will show that on this space, the laws of the given stochastic processes converge weakly, almost surely.

The stochastic process $Y^{(n)}$ naturally associated with the quadruplet $(\Loop (T_n), R_n, \nu_n, \rho_n)$ in the sense of Section \ref{sctn:res forms} is a continuous time random walk that jumps from its present state to each of its neighbouring vertices at rate $1$. Since every vertex of these discrete looptrees has degree $4$ (we consider self-loops as undirected), this amounts to an $\textsf{exp}(4)$ waiting time at every vertex.

Now define processes $(\tilde{Z}^{(n)}_t)_{t \geq 0}$ and $(\tilde{Y}^{(n)}_t)_{t \geq 0}$ by $\tilde{Z}^{(n)}_t = a_n^{-1} Z^{(n)}_{\lfloor 4na_nt \rfloor}$, and $\tilde{Y}^{(n)}_t = a_n^{-1} Y^{(n)}_{na_nt}$. It follows from Theorem \ref{thm:scaling lim RW resistance} that almost surely as $n \rightarrow \infty$, we have the weak convergence
\begin{equation}\label{eqn:RW conv VSRW}
( \tilde{Y}^{(n)}_{t})_{t \geq 0} \rightarrow (B_t)_{t \geq 0}.
\end{equation}

To deduce the result for $\tilde{Z}^{(n)}$ in place of $\tilde{Y}^{(n)}$, we will show that we can couple the processes $Y^{(n)}$ and $Z^{(n)}$ so that they almost surely have the same limit. To do this, note that we can obtain $Y^{(n)}$ from $Z^{(n)}$ by sampling a sequence of independent \textsf{exponential}($4$) random variables $(w^{(n)}_i)_{i=1}^{\infty}$, letting $S^{(n)}_m = \sum_{i=1}^m w^{(n)}_i$ for all $m \in \N$, and setting $Y^{(n)}_{t} = Z^{(n)}_{m}$ for all $t \in [S^{(n)}_m, S^{(n)}_{m+1})$. In particular, $Y^{(n)}_{S^{(n)}_m} = Z^{(n)}_{m}$ for all $m$.

Fix some $T< \infty$. Since the limit process $(B_t)_{t \geq 0}$ is almost surely continuous, the convergence of (\ref{eqn:RW conv VSRW}) actually holds with respect to the uniform topology. By again appealing to the Skorohod representation theorem along with a functional law of large numbers, we can therefore restrict to a probability space where $\big((\tilde{Y}_t)_{t \in [0,T]}, \big((na_n)^{-1}S^{(n)}_{\lfloor 4C_{\alpha} n^{1 + \frac{1}{\alpha}} t \rfloor}\big)_{t \in [0.T]}\big) \rightarrow ((B_t)_{t \in [0,T]}, t)$ jointly almost surely.

By composing these continuous limits, we therefore deduce that
\begin{align*}
(\tilde{Z}_t)_{t \in [0,T]} = \Big(\tilde{Y}_{na_n S^{(n)}_{\lfloor 4na_n \rfloor}}\Big)_{t \in [0,T]} \rightarrow (B_t)_{t \in [0,T]},
\end{align*}
uniformly almost surely. This proves that the distributional result holds for arbitrary $T < \infty$, and we extend to all time by applying \cite[Lemma 16.3]{BillsleyConv}.
\end{proof}

\begin{rmk}
It is also possible to deduce an annealed convergence result by embedding into the universal Urysohn space. We will not pursue this further here, but refer to \cite[Section 2.2]{ArchInfiniteLooptrees} for full details.
\end{rmk}

\begin{rmk}
It also follows from \cite[Theorem 1 and Proposition 14]{CroyHamLLT} that the transition densities of the discrete time random walks on any compact time interval will converge to those of $(B_t)_{t \geq 0}$ under the same rescaling when we isometrically embed in the space $(M,d_M)$ as described above. This can be metrized using the spectral Gromov-Hausdorff distance, introduced in \cite[Section 2]{CroyHamKumMixing}. It also follows by an application of \cite[Theorem 1.4]{CroyHamKumMixing} that for any $p \in [1, \infty)$, the rescaled $L^p$-mixing times for $\Loop (\tau_n)$ will converge to those of $\La$. We expect that we can prove similar convergence results for blanket times using ideas of \cite{AndrioBlanket}, and that the sequence of cover times will be Type 2 in the sense of \cite[Definition 1.1]{AbeCover}.
\end{rmk}

\section{Volume Bounds for Compact Stable Looptrees}\label{sctn:vol bounds}
In this section we prove Theorems \ref{thm:main global} and \ref{thm:main local}. Recall that $\mathbf{P}$ denotes the law of $\La$, and we let $U$ be $\textsf{Uniform} ([0,1])$. For ease of intuition, we define the open ball $B(u,r)$ using the metric $d$ rather than $R$.

\subsection{Infimal Lower Bounds}\label{sctn:inf LBs}
We prove the lower volume bounds of Theorems \ref{thm:main global} and \ref{thm:main local} via the following proposition.

\begin{prop}\label{prop:inf LB vol loc prob}
There exist constants $c, C, r_0 \in (0, \infty)$ such that for all $r \in (0, r_0)$ and all $\lambda \in (0, \frac{1}{2}r^{-\alpha})$,
\begin{align*}
\prb{ \nu (B(p(U), r)) < r^{\alpha} \lambda^{-1}} &\leq  C\exp \{-c\lambda^{\frac{1}{\alpha}}\}.
\end{align*}
\end{prop}

The proof of Proposition \ref{prop:inf LB vol loc prob} uses ideas from the proof of the upper bound on the Hausdorff dimension of $\La$ that was given in \cite[Section 3.3.1]{RSLTCurKort}. It relies on the fact that for any $s,t \in [0,1]$ with $s \leq t$,
\begin{equation}\label{eqn:d osc compare}
d(p(s), p(t)) \leq \X_s + \X_t - 2\inf_{s \leq r \leq t} \X_r.
\end{equation}
This result appears as \cite[Lemma 2.1]{RSLTCurKort}. Consequently, we can lower bound the volume of small balls in $\La$ by upper bounding the oscillations of $\X$. We use the notation $\diam_f (p([a,b])$ to denote the diameter of the set $p(a,b)$ defined from $f$ using the distance function of (\ref{eqn:d}), but with $f$ in place of $\X$.

We first give a technical lemma which appeared previously in \cite[Section 3.3.1]{RSLTCurKort} and uses an argument from \cite{BertoinLevy}. The final claim follows by bounded convergence.

First recall that for a function $f: [0, \infty) \rightarrow \R$ and $[a,b] \subset [0, \infty)$, we define
\[
\osc_{[a,b]} f := \sup_{s, t \in [a,b]} |f(t) - f(s)|.
\]

\begin{lem}\label{lem:osc}
Let $\mathcal{E}$ be an exponential random variable with parameter $1$, and let $X$ be a spectrally positive $\alpha$-stable \Levy process conditioned to have no jumps of size greater than $1$ on $[0, \mathcal{E}]$. Let $\tilde{\osc} = \osc_{[0, \mathcal{E}]} X$. Then there exists $\theta > 0$ such that $\E{e^{\theta \tilde{\osc}}} < \infty$. Moreover, $\E{e^{\theta \tilde{\osc}}} \downarrow 1$ as $\theta \downarrow 0$.
\end{lem}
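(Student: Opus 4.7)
The plan is to reduce to a L\'evy process with bounded jumps, for which all exponential moments exist, and then combine a standard exponential martingale estimate with the exponential tail of $\mathcal{E}$. First I would reinterpret the conditioning via the decomposition $X = X' + X''$, where $X'$ has L\'evy measure $\Pi|_{(0,1]}$ and $X''$ is an independent compound Poisson process of rate $\lambda := \Pi((1,\infty))<\infty$ carrying all the jumps of size $>1$. The event ``no jumps of size $>1$ on $[0,\mathcal{E}]$'' is exactly $\{\mathcal{E}< T_1\}$, where $T_1$ is the first jump time of $X''$, an exponential random variable of rate $\lambda$ independent of $(X',\mathcal{E})$. A direct computation (min of independent exponentials) shows that conditionally on $\{\mathcal{E}<T_1\}$, $\mathcal{E}$ is exponential with rate $\beta:=1+\lambda$ and is still independent of $X'$, while $X|_{[0,\mathcal{E}]} = X'|_{[0,\mathcal{E}]}$. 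Thus $\tilde{\osc}$ has the same law as $\osc_{[0,\mathcal{E}]} X'$ with these new (unconditioned) marginals.

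Next, because the jumps of $X'$ are bounded by $1$, the Laplace exponent $\psi'(\theta) := \log \mathbf{E}[e^{\theta X'_1}]$ is finite for every $\theta \in \mathbb{R}$ (the L\'evy--Khintchine integrand is controlled by $\theta^2 x^2 e^{|\theta|}$ on $(0,1]$, integrable against $\Pi$), is continuous, and satisfies $\psi'(0)=0$. Then $M^{\theta}_t := e^{\theta X'_t - t\psi'(\theta)}$ is a martingale for every $\theta$, and Doob's maximal inequality gives
\[
\mathbf{P}\Bigl(\sup_{t\le s} X'_t \ge a\Bigr) \le e^{-\theta a + s\,\psi'(\theta)}, \qquad \mathbf{P}\Bigl(\inf_{t\le s} X'_t \le -a\Bigr) \le e^{-\theta a + s\,\psi'(-\theta)}
\]
for $\theta>0$. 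Integrating against the exponential density of $\mathcal{E}$ and choosing $\theta_0>0$ small enough that $\max(\psi'(\theta_0),\psi'(-\theta_0))< \beta$ (possible by continuity at $0$), one obtains $\mathbf{P}(\sup_{t\le \mathcal{E}}|X'_t|\ge a)\le C e^{-\theta_0 a}$. Since $\tilde{\osc}\le 2\sup_{t\le \mathcal{E}}|X'_t|$, this yields $\mathbf{E}[e^{\theta \tilde{\osc}}]<\infty$ for all sufficiently small $\theta>0$.

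For the limiting statement, observe that $\theta \mapsto e^{\theta\tilde{\osc}}$ decreases to $1$ pointwise as $\theta\downarrow 0$ (since $\tilde{\osc}\ge 0$) and is dominated by $e^{\theta_0 \tilde{\osc}}$, which is integrable by the previous paragraph; dominated convergence then gives $\mathbf{E}[e^{\theta\tilde{\osc}}]\downarrow 1$. The main obstacle, and essentially the only non-routine step, is correctly identifying the conditional law in the first paragraph: once the conditioning is recast as an ordinary L\'evy process with truncated L\'evy measure evaluated at a (rescaled) independent exponential time, the exponential moment bound follows from classical martingale machinery.
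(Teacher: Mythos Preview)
Your argument is correct and is essentially the standard one that the paper defers to by citing \cite[Section 3.3.1]{RSLTCurKort} and \cite{BertoinLevy}; the paper gives no self-contained proof beyond noting that the final claim follows by bounded (equivalently, dominated) convergence, which is exactly your last step.

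One minor technical point: your displayed bound $\mathbf{P}\bigl(\sup_{t\le s} X'_t \ge a\bigr) \le e^{-\theta a + s\,\psi'(\theta)}$ comes from Doob's inequality applied to the submartingale $e^{\theta X'_t}$, which requires $\psi'(\theta)\ge 0$. Here $X'$ has strictly negative drift (you removed the uncompensated big jumps from a centred process), so $\psi'(\theta)<0$ for small $\theta>0$ and the bound as written cannot hold (the left side is nondecreasing in $s$). But in that regime $e^{\theta X'_t}$ is a nonnegative supermartingale and the maximal inequality gives the stronger bound $e^{-\theta a}$, so the subsequent integration against the exponential density of $\mathcal{E}$ goes through unchanged (indeed more easily). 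This does not affect your conclusion.
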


\begin{rmk}\label{rmk:osc deterministic}
The same results holds if $\EE$ is set to be deterministically equal to $1$ rather than an exponential random variable. The proof is almost identical to the one above, with one minor modification.
\end{rmk}

\begin{proof}[Proof of Proposition \ref{prop:inf LB vol loc prob}]
First, note the inclusion 
\begin{align*}
\{ \nu(B(p(U), r)) < r^{\alpha} \lambda^{-1}\} &\subset \Big\{ p([U, U+r^{\alpha} \lambda^{-1}]) \cap B^c(p(U), r) \neq \emptyset \Big\} \subset \Big\{ \diam_{\X} ( p {[U, U+r^{\alpha} \lambda^{-1}]}) > r \Big\}.
\end{align*}
By applying the Vervaat transform, the absolute continuity relation (\ref{eqn:abs cont RN deriv Levy bridge}) and scaling invariance, we get that 
\begin{align*}
\pr{ \diam_{\X} ( p {[U, U+r^{\alpha} \lambda^{-1}]}) > r } \leq \frac{(1 - r^{\alpha} \lambda^{-1})^{\frac{-1}{\alpha}}||p_1||_{\infty}}{p_1(0)} \pr{ \diam_X ( p {[0,1]}) > \lambda^{\frac{1}{\alpha}}}.
\end{align*}
To bound the latter quantity, let $N$ be the cardinality of the set $\{ t \in [0,1]: \Delta_t > 1 \}$, where $\Delta_t = X_t - X_{t^-}$ now denotes the jump size of $X$ rather than $\X$, and let $t_1, \ldots, t_{N}$ be its members in increasing order of size. Additionally let $t_0=0$ and $t_{N+1} = 1$, and $\tilde{C_{\alpha}} = \frac{\alpha - 1}{\Gamma(2 - \alpha)}$, so that $N \sim \textsf{Poi}(\tilde{C_{\alpha}})$. We then have:
\begin{align*}
\pr{\diam_X (p[0,1]) > \lambda^{\frac{1}{\alpha}}} %&= \sum_{n=0}^{\infty} \pr{N=n} \prcond{\diam_X (p[0,1]) > \lambda^{\frac{1}{\alpha}}}{N =n}{} \\
&\leq \sum_{n=1}^{\infty} \frac{e^{-\tilde{C_{\alpha}}} (\tilde{C_{\alpha}})^n}{n !}
\prcond{\sum_{i=1}^N \osc_{[t_i, t_{i+1}]} X > \lambda^{\frac{1}{\alpha}}}{N =n}{} \\
&\leq \sum_{n=1}^{\infty} \frac{e^{-\tilde{C_{\alpha}}} (\tilde{C_{\alpha}})^n}{n !} \E{e^{\theta \tilde{\osc}}}^{n} \exp \{- \theta \lambda^{\frac{1}{\alpha}}\},
\end{align*}
where $\tilde{Osc}$ is as in Remark \ref{rmk:osc deterministic}. Note that $N$ and $(\tilde{\osc}_{[t_i, t_{i+1}]})_{i \leq N}$ are not independent, but we certainly have $t_{i+1} - t_i \leq 1$ for all $i$, and hence by Lemma \ref{lem:osc} and Remark \ref{rmk:osc deterministic} we can choose $\theta$ small enough that $C_{\theta} := \E{e^{\theta \tilde{\osc}}} < \infty$. The result follows from noting that
\begin{align*}
\pr{\diam_X (p[0,1]) > \lambda^{\frac{1}{\alpha}}} &\leq e^{(C_{\theta} - 1)\tilde{C_{\alpha}}} e^{- \theta \lambda^{\frac{1}{\alpha}}}.
\end{align*}
\end{proof}

By taking a union bound, the same argument can be used to give a bound on the global infimum.

\begin{prop}\label{prop:prob glob vol LB}
There exist constants $c, C, r_0 \in (0, \infty)$ such that for all $r \in (0, r_0)$ and all $\lambda \in (0, \frac{1}{2}r^{-\alpha})$,
\[
\prb{ \inf_{u \in \L_{\alpha}} \nu \Big( B (u,r) \Big) < r^{\alpha} \lambda^{-1} } \leq C r^{- \alpha} \lambda  \exp \{ -c \lambda^{\frac{1}{\alpha}} \}.
\]
\begin{proof}
By the same reasoning as in the proof of Proposition \ref{prop:inf LB vol loc prob}, we have:
\begin{small}
\begin{align*}
\{ \inf_{u \in \L} \nu(B(u,r)) < r^{\alpha} \lambda^{-1}\} &\subset \Big\{ \diam_{\Xb} (p{[kr^{\alpha} \lambda^{-1}, (k+1) r^{\alpha} \lambda^{-1} \wedge 1]})> \frac{1}{2} r \text{ for some } k=0, \ldots, \lfloor r^{-\alpha} \lambda \rfloor \Big\},
\end{align*}
\end{small}
and hence
\begin{small}
\begin{align*}
\prb{ \inf_{u \in \L} \nu(B(u,r)) < r^{\alpha} \lambda^{-1}} &\leq \prb{ \diam_{\Xb} (p{[kr^{\alpha} \lambda^{-1}, (k+1) r^{\alpha} \lambda^{-1} \wedge \frac{1}{2}]})> \frac{1}{2} r \text{ for some } k=0, \ldots, \lfloor \frac{1}{2} r^{-\alpha} \lambda \rfloor } \\
&\ + \prb{ \diam_{\Xb} (p{[\frac{1}{2} \vee kr^{\alpha} \lambda^{-1}, (k+1) r^{\alpha} \lambda^{-1} \wedge 1]})> \frac{1}{2} r \text{ for some } k= \lfloor \frac{1}{2} r^{-\alpha} \lambda \rfloor, \ldots, \lfloor r^{-\alpha} \lambda \rfloor } \\
 \\
&\leq C_{\theta} r^{-\alpha} \lambda \frac{||p_{\frac{1}{2}}||_{\infty}}{p_1(0)} e^{-\theta \lambda^{\frac{1}{\alpha}}},
\end{align*}
\end{small}
where the final line follows by Proposition \ref{prop:inf LB vol loc prob}.
\end{proof}
\end{prop}

\begin{proof}[Proof of infimal lower bounds in Theorems \ref{thm:main global} and \ref{thm:main local}]
Take $c$ as in Proposition \ref{prop:inf LB vol loc prob}, and $M>c^{-1}$. Set
\begin{align*}
g(r) &= Mr^{\alpha}(\log \log r^{-1})^{-\alpha}, \hspace{5mm} \text{ and } \hspace{5mm} J_r = \{ \nu (B(p(U), r)) < g(r) \}.
\end{align*}
Taking $\lambda = M(\log \log r^{-1})^{\alpha}$ in Proposition \ref{prop:inf LB vol loc prob} we see that $\prb{ J_r} \leq C(\log r^{-1})^{-cM}$, and since $M > c^{-1}$ we have by Borel-Cantelli that $\prb{J_{2^{-k}} \text{ i.o.}} = 0$. Hence there almost surely exists $K \in \N$ such that $J^c_{2^{-k}}$ occurs for all $k \geq K$. On this event, $\nu (B(p(U), r)) \geq  2^{-\alpha} g(r)$ for all sufficiently small $r$, or equivalently,
\begin{equation}
\liminf_{r \downarrow 0} \Bigg( \frac{\nu(B(p(U), r))}{r^{\alpha}(\log \log r^{-1})^{-\alpha}} \Bigg) \geq 2^{-\alpha }M.
\end{equation}

To deduce the result for $\nu$-almost every $u \in \La$ we apply Fubini's theorem.  Letting
\[
F(\L_{\alpha},u) = \mathbbm{1} \Big\{ \liminf_{r \downarrow 0} \Bigg( \frac{\nu(B(u, r))}{r^{\alpha}(\log \log r^{-1})^{-\alpha}} \Bigg) \geq 2^{-\alpha }M \Big\},
\]
we have from above that
\[
\int_0^1 \Eb{ F(\L_{\alpha}, u) }du  = \Eb{F(\L_{\alpha}, p(U))} = 1.
\]
By Fubini's theorem, this implies that almost surely, $F(\L_{\alpha}, u) = 1$ for Lebesgue almost every $u \in [0,1]$, and consequently for $\nu$-almost every $u \in \L_{\alpha}$. This proves (\ref{eqn:loc inf LB}).

The proof of the global bound (\ref{eqn:glob inf LB}) is similar. Take $c$ as in Proposition \ref{prop:inf LB vol loc prob}, choose some $A > \alpha c^{-1}$, and set $\epsilon = A - \alpha c^{-1}$. Then, setting $\lambda = (A\log r^{-1})^{\alpha}$ we have by Proposition \ref{prop:prob glob vol LB} that:
\begin{align*}
\prb{ \inf_{u \in \L_{\alpha}} \nu \Big( B(u,r) \Big) < r^{\alpha} (A\log r^{-1})^{-\alpha} } &\leq C r^{\epsilon}(\log r^{-1})^{\alpha}.
\end{align*}
Consequently, letting
\[
K_r = \big\{ \inf_{u \in \L_{\alpha}} \nu \Big( B_r (u) \Big) < r^{\alpha} (A\log r^{-1})^{-\alpha} \big\},
\]
we have by Borel-Cantelli that $\prb{K_{2^{-k}} \text{ i.o.}} = 0$. Hence, there almost surely exists a $K_0 < \infty$ such that for any $r < 2^{-K_0}$ we have 
(\ref{eqn:glob inf LB}), or more precisely that:
\[
\inf_{u \in \L_{\alpha}} \nu \Big( B(u,r) \Big) \geq 2^{-\alpha} r^{\alpha} (A\log r^{-1})^{- \alpha}.
\]
\end{proof}

\subsection{Supremal Upper Bounds}\label{sctn:sup vol UBs}
In this section we prove (\ref{eqn:glob sup UB}) and (\ref{eqn:loc sup UB}) using the following Williams' Decomposition. By appealing to uniform re-rooting invariance, we will treat $p(U)$ as the root of the looptree throughout.

\subsubsection{Williams' Decomposition}\label{sctn:Williams Decomp}
The Williams' Decomposition of \cite{AbDelWilliamsDecomp} gives a decomposition of a stable tree $\tilde{\Ta}$ along its spine of maximal height. In the Brownian case $\alpha = 2$, this corresponds to Williams' decomposition of Brownian motion. Letting $H_{\text{max}} = \sup_{u \in \tilde{\Ta}} d_{\tilde{\Ta}}(\rho, u)$, we see from \cite[Equation (23)]{DuqWangDiameter} (and references therein) that there is almost surely a unique $u_h \in \tilde{\Ta}$ such that $d_{\tilde{\Ta}}(\rho, u_h) = H_{\text{max}}$. We define the Williams' spine (or W-spine) of $\tilde{\Ta}$ to be the segment $[[\rho, u_h]]$, and take the Williams' loopspine (or W-loopspine) in the corresponding looptree $\La$ to be the closure of the set of loops coded by points in $[[\rho, u_h]]$. A main result of \cite{AbDelWilliamsDecomp} is a theorem which firstly gives the distribution of the loop lengths along the W-loopspine, and additionally the distribution of the fragments obtained by decomposing along it.

Given the spine from $\rho$ to $u_h$, and conditional on $\Hm = H$, the loops along the W-loopspine can be represented by a Poisson point measure $\sum_{j \in J} \delta (l_j, t_j, u_j)$ on $\R^+ \times [0, H] \times [0,1]$ with a certain intensity. A point $(l,t,u)$ corresponds to a loop of length $l$ in the W-loopspine, occurring on the W-spine at distance $t$ from the root in the corresponding tree $\tilde{\Ta}$, and such that a proportion $u$ of the loop is on the ``left" of the W-loopspine, and a proportion $1-u$ is on the ``right". In \cite{AbDelWilliamsDecomp}, this is written in terms of the exploration process on $\tilde{\Ta}$, but  we interpret their result below in the context of looptrees.

We note that when stating this result, we are not conditioning on the total mass of $\tilde{\Ta}$: only the maximal height. The mass will depend on its height via the joint laws for these under the \Ito excursion measure.

\begin{theorem}(Follows directly from \cite[Lemma 3.1 and Theorem 3.3]{AbDelWilliamsDecomp}).\label{thm:AbDel Williams Decomp}
\begin{enumerate}[(i)]
\item Conditionally on $H_{\text{max}}=H$, the set of loops in the W-loopspine forms a Poisson point process $\mu_{\textsf{W-loopspine}} = \sum_{j \in \mathcal{J}} \delta (l_j, t_j, u_j)$ on the W-spine in the corresponding tree with intensity
\[
\mathbbm{1}_{\{ [0,1] \}}(u) \mathbbm{1}_{\{[0,H]\}}(t) l \exp \{ -l (H-t)^{\frac{-1}{\alpha - 1}} \} du \ dt \ \Pi (dl),
\]
where $\Pi$ is the underlying \Levy measure, with $\Pi(dl) = \frac{1}{|\Gamma(-\alpha)|} l^{-\alpha - 1} \mathbbm{1}_{(0, \infty)}(l) dl$ in the stable case. We will denote the atom $\delta (l_j, t_j, u_j)$ by $\textsf{Loop}_j$.
\item Let $\delta (l, t, u)$ be an atom of the Poisson process described above. The set of sublooptrees grafted to the W-loopspine at a point in the corresponding loop can be described by a random measure $M^{(l)} = \sum_{i \in I} \delta^{(l)} (\EE_i, D_i)$, where $\EE_i$ is a \Levy excursion that codes a looptree in the usual way, and $D_i$ represents the distance going clockwise around the loop from the point at which this sublooptree is grafted to the loop, to the point in the loop that is closest to $\rho$. This measure has intensity
\begin{align*}
N( \cdot, \Hm \leq H-t) \times \mathbbm{1}_{\{[0,l]\}}(D) dD.
\end{align*}
In particular, the sublooptrees are just rescaled copies of our usual normalised compact stable looptrees, and each of these is grafted to the loop on the W-loopspine at a uniform point around the loop lengths.
\end{enumerate}
\end{theorem}

\begin{rmk}
Point $(ii)$ is a slight extension of the results of \cite{AbDelWilliamsDecomp}, where the authors write that the intensity of subtrees incident to the W-spine at a node of ``degree" $l$ has intensity $l N( \cdot, \Hm \leq m-t)$. However, it follows from \cite[Equation (11)]{DuqLeGPFALT} and the remarks below it that the corresponding sublooptrees are actually distributed uniformly around the corresponding loop.
\end{rmk}

\subsubsection{Encoding the Looptree Structure in a Branching Process}
The Williams' decomposition suggests a natural way to encode the fractal structure of $\La$ in a branching process or cascade. Specifically, we let $\emptyset$ denote the root vertex of our cascade. This represents the whole looptree $\La$ (in particular, $\emptyset$ should not be confused with $\rho$, which is the root of $\La$). By performing the Williams' decomposition on $\La$ and removing the W-loopspine, the fragments obtained are countably many smaller copies of $\La$, which we view as the children of $\emptyset$ in our branching process, and index by $\N$. Moreover, to each edge joining $\emptyset$ to one of its offspring $i$, we associate a random variable $m (\emptyset,i)$ which gives the mass of the sublooptree corresponding to index $i$. The root of a sublooptree is the point at which it is grafted to the W-loopspine of its parent.

We can then perform further Williams' decompositions of these sublooptrees. More precisely, if $i$ is a child of $\emptyset$, we can decompose along its W-loopspine from its root to its point of maximal tree height to obtain a countable collection of offspring of $i$ that correspond to the fragments obtained on removing this W-loopspine, and label the offspring as $(ij)_{j \geq 1}$. By repeating this procedure again and again on the resulting subsublooptrees, we can keep iterating to obtain an infinite branching process.

\begin{rmk}
It may seem more straightforward to use a spinal decomposition to a uniform point (as in \cite{HPWSpinPart}) as the basis of this iteration, rather than the Williams' decomposition. However, this leads to technical difficulties in the case when $V$ is chosen so that $p(V)$ is a point too close to $p(U)$, and it is convenient to avoid this by instead decomposing along the maximal spine in the underlying tree.
\end{rmk}

We index this process using the Ulam-Harris labelling convention defined in Section \ref{sctn:trees background discrete}. Using the notation of \cite{Neveu}, an element of our branching process will be denoted by $u = u_1 u_2 u_3 \ldots u_j$, and corresponds to a smaller sublooptree $L \subset \La$. Its offspring will all be of the form $(u i)_{i \in \N}$, with corresponding roots $(\rho_{u i})_{i \in \N}$, where $ui$ here abbreviates the concatenation $u_1 u_2 u_3 \ldots u_j i$, and each will correspond to one of the further sublooptrees obtained on performing a Williams' decomposition of $L$.

Moreover, to each edge joining $u$ to its child $u i$ we associate a random variable $m(u , u i)$. These give the ratios of the masses of each of the sublooptrees that correspond to the offspring of $u$, so that $\sum_{i=1}^{\infty} m(u, ui) = 1$ for all $u \in \mathcal{U}$. Given a particular element $u = u_1 u_2 \ldots u_j$ of the branching process, the overall mass of the corresponding sublooptree is then given by $M_u = \prod_{i =0}^{j-1} m(u_i, u_{i+1})$, where here we let $u_0$ denote the root $\emptyset$.

\subsubsection{Main Argument for Supremal Upper Bound}\label{sctn:sup UB actual arg}

The simplest way to upper bound the volume is to sum the masses of all the sublooptrees that are incident to the W-loopspine at a point within distance $r$ of $p(U)$, giving
\begin{align}\label{eqn:UB sum crude}
\nu (B(p(U), r)) \leq \sum_{i=1}^{\infty} {M}_i \mathbbm{1} \{\rho_i \in B(p(U), r) \}.
\end{align}

We would like to use this to bound $\prb{\nu(B(p(U), r)) \geq r^{\alpha} \lambda}$. However, this approach is not very sharp since the probability that there is such an incident sublooptree of mass greater than $r^{\alpha} \lambda$ is of order $\lambda^{\frac{-1}{\alpha}}$, and when this happens the bound on the right hand side of (\ref{eqn:UB sum crude}) is immediately too large. However, if this event occurs, it is actually likely that this sublooptree is not completely contained in $B(p(U), r)$, and so we are not really capturing the right asymptotics for the behaviour of $\nu(B(p(U), r))$ by applying (\ref{eqn:UB sum crude}).

To refine the argument we instead repeat the same procedure around the W-loopspine of the larger sublooptree. If there are no larger (sub)sublooptrees incident to the (sub)W-loopspine close to the (sub)root, then we conclude by summing the smaller terms; otherwise, we can keep repeating the same procedure and iterating further until eventually we reach a stage where there are no more ``large" sublooptrees to consider.

This iterative process corresponds to selecting a finite subtree $T$ of $\mathcal{U}$ in such a way that the elements of $T$ correspond to the large sublooptrees around which we perform further iterations. The offspring distribution of $T$ will be sufficiently subcritical that the process will die out fairly quickly. Conditioning on the extinction time and then on the total progeny of $T$, we bound the volume of the ball $B(p(U), r)$ by the sum of the masses of all the small sublooptrees that are grafted to the W-loopspine of each of the large sublooptrees.

Below, we describe how we select $T$ generation by generation as a subtree of $\mathcal{U}$. Throughout, we take:
\begin{align*}
\bf = \frac{\alpha - 1}{4\alpha - 3}, \hspace{5mm} \bg = \frac{\alpha - 1}{4\alpha - 3}, \hspace{5mm} \bE = \frac{2\alpha - 1}{2\alpha (4\alpha - 3)}, \hspace{5mm} \bd = \frac{1}{4\alpha - 3}.
\end{align*}
Note that $2\bE - \frac{1}{\alpha}(1 - \bf - \bg) = 0$. Also fix some $\kappa \in \big( 0, \big( \frac{1}{3e^2} \Gamma (1-\frac{1}{\alpha})\big)^{\alpha} \big)$. We need $\kappa$ to be sufficiently small to ensure that $T$ is sufficiently subcritical, but we will not be taking any kind of limit as $\kappa \downarrow 0$.

\begin{tcolorbox}[colback=white]
\textbf{Iterative Algorithm}\\

Start by taking $\emptyset$ to be the root of $T$. Recall this represents the whole looptree $\La$.
\begin{enumerate}
\item Perform a Williams' decomposition of $\La$ along its W-loopspine.
\item Consider the resulting fragments. To choose the offspring of $\emptyset$, select the fragments that have mass at least $\kappa^{-1} r^{\alpha} \lambda^{1-\bf-\bg}$, and such that the subroots of the corresponding looptrees are within distance $r$ of the root of $\emptyset$.
\item Repeat this process to construct $T$ one generation at a time. Given an element $u = u_1 u_2 \ldots u_j \in T$, there is a corresponding sublooptree $L_u$ in $\La$ with root $\rho_u$ and $M_u := \nu( L_u) \geq \kappa^{-1} r^{\alpha} \lambda^{1-\bf-\bg}$. Consider the fragments obtained in a Williams' decomposition of $L_u$, and select those that correspond to further sublooptrees that are within distance $r$ of $\rho_u$, and also such that they have mass at least $\kappa^{-1} r^{\alpha} \lambda^{1-\bf-\bg}$ (i.e. with $M_{u_1 u_2 \ldots u_j u_{j+1}} = \prod_{k=0}^{j} m(u_k, u_{k+1}) \geq \kappa^{-1} r^{\alpha} \lambda^{1-\bf-\bg}$), to be the offspring of $u$.
\item For each $u = u_1 u_2 \ldots u_j \in T$, set 
\[
S_u = \sum_{i=1}^{\infty} M_{ui} \mathbbm{1} \Big\{ \rho_{ui} \in B (\rho_u, r) \Big\} \mathbbm{1} \Big\{ M_{ui} < \kappa^{-1} r^{\alpha} \lambda^{1-\bf-\bg} \Big\}.
\]
\end{enumerate}
\end{tcolorbox}

As explained above, in the event that $T$ is finite we then have that:
\[
\nu(B(p(U), r)) \leq \sum_{u \in T} S_u.
\]

Since the Williams' decomposition involves conditioning on the height of the corresponding stable tree rather than its mass, we will prove this theorem by rescaling each sublooptree corresponding to an element of $T$ to have underlying tree height $1$, and then using Theorem \ref{thm:AbDel Williams Decomp} to analyse the fragments. Most of the effort in proving the supremal upper bounds is devoted to proving the following proposition.

\begin{prop}\label{prop:prob bound sup UB}
There exist constants $\tilde{c}, \tilde{C} \in (0, \infty)$ such that for all $r<1$ and all $\lambda > 1$, 
\begin{align*}
\prb{ \nu (B(p(U), r)) \geq r^{\alpha} \lambda} \leq \tilde{C} \lambda^{\frac{\alpha - 1}{4\alpha - 3}} e^{-\tilde{c}\lambda^{\frac{\alpha - 1}{4\alpha - 3}}}.
\end{align*}
\end{prop}

The volume results (\ref{eqn:glob sup UB}) and (\ref{eqn:loc sup UB}) follow from Proposition \ref{prop:prob bound sup UB} by Borel-Cantelli, similarly to those in the previous section. We sketch this below, and prove Proposition \ref{prop:prob bound sup UB} afterwards.

\begin{proof}[Proof of supremal upper bounds of Theorems \ref{thm:main global} and \ref{thm:main local}, assuming Proposition \ref{prop:prob bound sup UB}]
Take $\tilde{c}$ as in Proposition \ref{prop:prob bound sup UB}, and choose some $A > \tilde{c}^{-1}$.
Taking $\lambda_r = A (\log \log r^{-1})^{\frac{4\alpha - 3}{\alpha - 1}}$ in Proposition \ref{prop:prob bound sup UB} and applying Borel-Cantelli we deduce that $\prb{I_{2^{-k}} \text{ i.o.}} = 0$, where
\[
I_r = \{ \nu(B(p(U), r)) \geq r^{\alpha} \lambda_r \}.
\]
Similarly to the proof of the infimal bounds, it follows that
\[
\limsup_{r \downarrow 0} \Bigg( \frac{\nu(B(p(U), r))}{r^{\alpha} (\log \log r^{-1})^{\frac{4\alpha - 3}{{\alpha} - 1}}} \Bigg) \leq 2^{\alpha} A
\]
almost surely, and we extend to $\nu$-almost every $u \in \La$ using Fubini's theorem as before. This proves (\ref{eqn:loc sup UB}).

To prove the global bound (\ref{eqn:glob sup UB}), we have to do a bit more work. First take some $\epsilon > 0$, and define $\mathcal{W}$ to be the set of sets
\begin{align*}
\Big\{p([n c^{\alpha}(\alpha + \epsilon)^{-\alpha} r^{\alpha} (\log r^{-1})^{-\alpha(1+ \epsilon)}, (n+1) c^{\alpha}(\alpha + \epsilon)^{-\alpha} r^{\alpha} (\log r^{-1})^{-\alpha(1+ \epsilon)} )) : \vphantom{n \in \{0, 1, \ldots \lceil c^{-\alpha}(\alpha + \epsilon)^{\alpha} r^{-\alpha} (\log r^{-1})^{\alpha(1+ \epsilon)} \rceil \} \big\}}& \\
\vphantom{\big\{p([n c^{\alpha}(\alpha + \epsilon)^{-\alpha} r^{\alpha} (\log r^{-1})^{-\alpha(1+ \epsilon)}, (n+1) c^{\alpha}(\alpha + \epsilon)^{-\alpha} r^{\alpha} (\log r^{-1})^{-\alpha(1+ \epsilon)} )) :} n \in \{0, 1, \ldots \lfloor c^{-\alpha}(\alpha + \epsilon)^{\alpha}& r^{-\alpha} (\log r^{-1})^{\alpha(1+ \epsilon)} \rfloor \} \Big\},
\end{align*}
where $c$ takes the same value as it did in Proposition \ref{prop:prob glob vol LB}. It then follows from Proposition \ref{prop:prob glob vol LB} that
\begin{align}\label{eqn:glob UB 1}
\prb{ \mathcal{W} \text{ is an } r \text{-covering of } \La} \geq 1-  C c^{-\alpha}(\alpha + \epsilon)^{\alpha} r^{\epsilon} (\log r^{-1})^{\alpha(1+ \epsilon)}
\end{align}
for all sufficiently small $r$. Moreover, assuming that $\mathcal{W}$ is indeed an $r$-covering of $ \La$, and letting
\[
W^r =  \{x \in \L_{\alpha}: d(x,y) \leq r \text{ for some } y \in W \}
\]
be the $r$-fattening of $W$ for any set $W \in \mathcal{W}$, say with
\[
W = p([n c^{\alpha}(\alpha + \epsilon)^{-\alpha} r^{\alpha} (\log r^{-1})^{-\alpha(1+ \epsilon)}, (n+1) c^{\alpha}(\alpha + \epsilon)^{-\alpha} r^{\alpha} (\log r^{-1})^{-\alpha(1+ \epsilon)} )),
\]
we have that $W^r \subset B(p(nc^{\alpha}(\alpha + \epsilon)^{-\alpha} r^{\alpha} (\log r^{-1})^{-\alpha(1+ \epsilon)}, 2r)$. It hence follows that
\begin{align*}
\{ \sup_{u \in \La} \nu (B(u, r)) \leq r^{\alpha} \lambda_r \} \
&\subset  \Big\{ \{ \mathcal{W} \text{ is an } r \text{-covering of } \La\} \cap \{ \nu(W^r) \leq r^{\alpha} \lambda_r \forall W \in \mathcal{W} \} \Big\},
%&\subset \Big\{ \{ \mathcal{W} \text{ is an } r \text{-covering of } \La\} \\
%& \hspace{10mm} \cap \{ \nu(B(p(nc^{-1}(\alpha + \epsilon) r^{\alpha} (\log r^{-1})^{-\alpha(1+ \epsilon)}), 2r)) \leq r^{\alpha} \lambda_r \forall n \} \Big\},
\end{align*}
and consequently,
\begin{align}\label{eqn:glob UB 2}
\begin{split}
\prb{\sup_{u \in \La} \nu (B(u, r)) \geq r^{\alpha} \lambda_r} &\leq \prb{ \mathcal{W} \text{ is not an } r \text{-covering of } \La} \\
&\hspace{10mm} + \prb{ \exists n: \nu(B(p(nc^{\alpha}(\alpha + \epsilon)^{-\alpha} r^{\alpha} (\log r^{-1})^{-\alpha(1+ \epsilon)}), 2r)) \geq r^{\alpha} \lambda_r }.
\end{split}
\end{align}
It follows from rerooting invariance at deterministic points that for any $n$,
\begin{align*}
\prb{\nu(B(p(nc^{\alpha}(\alpha + \epsilon)^{-\alpha} r^{\alpha} (\log r^{-1})^{-\alpha(1+ \epsilon)}), 2r)) \geq r^{\alpha} \lambda_r } &= \prb{\nu(B(\rho, 2r)) \geq r^{\alpha} \lambda_r } = \prb{\nu(B(p(U), 2r)) \geq r^{\alpha} \lambda_r },
\end{align*}
and hence by applying a union bound and Proposition \ref{prop:prob bound sup UB}, we see that
\begin{align*}
\prb{ \exists n: \nu(B(p(nc^{\alpha}(\alpha + \epsilon)^{-\alpha} r^{\alpha} (\log r^{-1})^{-\alpha(1+ \epsilon)}), 2r)) \geq r^{\alpha} \lambda_r } \leq C'r^{-\alpha} \lr^{\alpha(1+ \epsilon)} \lambda^{\frac{\alpha - 1}{4\alpha - 3}} e^{-\tilde{c}\lambda^{\frac{\alpha - 1}{4\alpha - 3}}}.
\end{align*}

In particular, taking $\lambda = \lambda_r = ((\alpha + \epsilon)\tilde{c}^{-1} \log r^{-1})^{\frac{4\alpha - 3}{{\alpha - 1}}}$, where $\tilde{c}$ is as it was in Proposition \ref{prop:prob bound sup UB}, we obtain
\begin{align}\label{eqn:glob UB 3}
\prb{ \exists n: \nu(B(p(nc^{\alpha}(\alpha + \epsilon)^{-\alpha} r^{\alpha} (\log r^{-1})^{-\alpha(1+ \epsilon)}), 2r)) \leq r^{\alpha} \lambda_r } \leq C'r^{\epsilon} \lr^{1+\alpha(1+ \epsilon)}.
\end{align}

By combining equations (\ref{eqn:glob UB 1}), (\ref{eqn:glob UB 2}) and (\ref{eqn:glob UB 3}), we therefore see that
\[
\prb{\sup_{u \in \La} \nu (B(u, r)) \geq r^{\alpha} \lambda_r} \leq C'r^{\epsilon} \lr^{1+\alpha(1+ \epsilon)}.
\]
Hence, letting $J_r = \{ \sup_{u \in \La} \nu (B(u, r)) \geq r^{\alpha} \lambda_r \}$, we have as before that $\prb{J_{2^{-k}} \text{ i.o.}} = 0$. This implies (\ref{eqn:glob sup UB}), since similarly to before, we deduce that there exists $r_0 > 0$ such that for all $r \in (0, r_0)$,
\[
\sup_{u \in \La} \nu (B(u, r)) \leq 2^{\alpha} r^{\alpha} (\log r^{-1})^{\frac{4\alpha - 3}{{\alpha - 1}}}.
\]
\end{proof}

For a given looptree $\tilde{L}_{\alpha}$ and a given $R>0$, we let $I_R$ denote the set of points in the W-loopspine that also fall within distance $R$ of the root. Formally,
\[
I_R = \bigcup_{s \preceq u_H} \{ t \geq s: \tilde{X}^{\text{exc}}_t = \inf_{s < r \leq t} \tilde{X}^{\text{exc}}_r, d(\rho, p(t)) < R \},
\]
where $\tilde{X}^{\text{exc}}$ is the \Levy excursion coding $\tilde{L}_{\alpha}$. $I_R$ can be endowed with a natural notion of length, denoted $|I_R|$, which can be thought of as the sum of the lengths of loop fragments contained in $I_R$. Formally, this can be defined as the Lebesgue measure of the closure of the set $\{\tilde{X}^{\exc}_t: t \in I_R\}$.

To bound the progeny of $T$, we can then use the Williams' decomposition to view the sublooptrees grafted to the W-loopspine as a Poisson process on $D([0, \infty), [0, \infty)) \times I_R$. In particular, the number of sublooptrees with mass greater than $m$ will be stochastically dominated by a Poisson with parameter $|I_R| N(\zeta > m)$, where $N$ denotes the \Ito excursion measure and $\zeta$ denotes the length of an excursion under this measure. $|I_R|$ will be roughly of order $R$, but the purpose of the next lemma is to control this more precisely.

\begin{lem}\label{lem:segment length bound}
Let $(\L^1_{\alpha}, \rho^1, d^1, \nu^1)$ be a compact stable looptree conditioned so that its underlying tree has height $1$, but with no conditioning on its mass. Take $R \leq \lambda^{-\beta_4}$, and let $I_R$ and $|I_R|$ be as above.
Then
\[
\prb{|I_R| \geq 3R\lambda^{2\bE}} \leq C(e^{-c\lambda^{\bd(\alpha - 1)}} + e^{- c\lambda^{2\bE}}) \leq Ce^{-c\lambda^{\frac{\alpha - 1}{4\alpha - 3}}}.
\]
\begin{proof}
It is possible that $|I_R|$ may be of order greater than $R$ if, for example, many of the loops close to the root have spinal branch points distributed such that they split the loop into two very unequal segments. We show that this occurs only with very low probability.

First note that, by Theorem \ref{thm:AbDel Williams Decomp}(i), the loops that fall on the first half of the W-spine stochastically dominate a Poisson point measure $\sum_{j \in \mathcal{J}} \delta (l_j, t_j, u_j)$ with intensity
\begin{equation}\label{eqn:height PP first half}
\mathbbm{1}_{\{ [0,1] \}}(u) \mathbbm{1}_{\{[0,\frac{1}{2}]\}}(t) l \exp \{ -l 2^{\frac{1}{\alpha - 1}} \} du \ dt \ \Pi (dl).
\end{equation}
Elements of the set $(t_j)_{j \in \mathcal{J}}$ correspond to distances along the spine in the underlying tree, but we will consider them as time indices throughout the remainder of this proof. We will model the loop lengths using a subordinator, where a jump of the subordinator of size $\Delta$ at time $t$ corresponds to a loop of length $\Delta$ which in turn corresponds to a node at a distance $t$ along the W-spine in the associated stable tree.

To prove the bound, we first condition on existence of a loop in the W-loopspine with length $l$ greater than $4R$ and with $u \in [\frac{1}{4}, \frac{3}{4}]$. We say that such a loop is ``good". We also say that a loop is ``goodish" if it just has length at least $4R$, with no restriction on $u$.  We then select the closest good loop to $\rho$. Given such a loop, the number of goodish loops between $\rho$ and the first good loop is stochastically dominated by a Geometric($\frac{1}{2}$) random variable. Letting this number be $N$, $|I_R|$ can then be upper bounded by the random variable
\[
2R(N+1) + \sum_{i=1}^{N+1} Q^{(i)},
\]
where $Q^{(i)}$ denotes the sum of the lengths of all the smaller loops on the W-loopspine that are between the $(i-1)^{\text{th}}$ and $i^{\text{th}}$ goodish loops, and the term $2R(N+1)$ comes from selecting a segment of length at most $R$ in each direction from the ``base point" around each of the goodish loops. Each $Q^{(i)}$ can be independently approximated by an $(\alpha - 1)$-stable subordinator run up until an exponential time and conditioned not to have any jumps greater than $4R$.

First let the number of good loops on the first half of the W-spine be equal to $M$. From (\ref{eqn:height PP first half}), it follows that $M$ stochastically dominates a Poisson random variable with parameter
\[
\kappa_R = \frac{1}{4} \int_{4R}^{8R} l \exp \{ -l 2^{\frac{1}{\alpha - 1}} \} \ \Pi (dl) \geq \frac{1}{4}\int_{4R}^{8R} l^{-\alpha} \exp \{ -8R 2^{\frac{1}{\alpha - 1}} \} dl \geq \tilde{C} R^{1-\alpha},
\]
where $\tilde{C} = \frac{1}{4(\alpha - 1)} (4^{1-\alpha} - 8^{1-\alpha})\exp \{ -8 \cdot 2^{\frac{1}{\alpha - 1}} \}$ is just a constant. Hence,
\begin{equation}\label{eqn:good loop exist bound}
\prb{M=0} \leq e^{-cR^{1-\alpha}} \leq e^{-c\lambda^{\bd(\alpha - 1)}}.
\end{equation}
We henceforth condition on $M>0$. Next, note that for any loop of length at least $4R$, the probability that it is good is at least $\frac{1}{2}$ (independently of the other loops), and so if we examine all such loops of the W-loopspine in order from $\rho$, as described in the previous paragraph, we have that $N + 1$ is stochastically dominated by a \textsf{Geo}($\frac{1}{2}$) random variable. Hence, for any $\theta > 0$, we have by  a Chernoff bound that
\begin{equation}\label{eqn:N geo bound}
\prb{N+1 \geq \lambda^{2\bE}} \leq \prb{\textsf{Geo}\Big(\frac{1}{2}\Big) \geq \lambda^{2\bE}} \leq C e^{-\lambda^{2\bE}}.
\end{equation}

To bound $\sum_{i=1}^{N+1} Q^{(i)}$, we again use (\ref{eqn:height PP first half}). Conditionally on $M>0$, (\ref{eqn:height PP first half}) implies that the times between each successive pair of goodish loops in the W-loopspine will each be independently stochastically dominated by an \textsf{exp}($2\kappa_R$) random variable, which we denote by $\mathcal{E}_R$. Hence, the sum of the smaller jumps between each pair can be stochastically dominated by $\textsf{Sub}_{\mathcal{E}_R}$, where $\textsf{Sub}$ is a subordinator with \Levy measure 
\[
cl^{-\alpha} \mathbbm{1}_{\{l \leq 4R\}} dl,
\]
Also let $\EE$ be an \textsf{exp}($2\tilde{C}$) random variable (recall that $\kappa_R = \tilde{C}R^{1-\alpha}$). It further follows by rescaling that 
\begin{align*}
\prb{\sum_{i=1}^{N+1} Q^{(i)} \geq R \lambda^{2\bE}} \leq \prb{\sum_{i=1}^{N+1} \textsf{Sub}^{(i)}_{\mathcal{E}_R} \geq R \lambda^{2\bE}} \leq \prb{\sum_{i=1}^{N+1} \textsf{Sub}^{(i)'}_{\mathcal{E}} \geq \lambda^{2\bE}},
\end{align*}
where $\textsf{Sub}^{(i)}$ are independent copies of $\textsf{Sub}$, and $\textsf{Sub}^{(i)'}$ are independent copies of a subordinator similar to $\text{Sub}$ but with \Levy measure
\[
cl^{-\alpha} \mathbbm{1}_{\{l \leq 4\}} dl.
\]
It then follows by Lemma \ref{lem:osc} that there exists $\theta > 0$ such that $C_{\theta} := \Eb{e^{\theta \text{Sub}'_{\mathcal{E}}}} < \frac{3}{2}$. For such $\theta$, we hence have
\begin{align}\label{eqn:Qi bound}
\begin{split}
\prb{\sum_{i=1}^{N+1} Q^{(i)} \geq R \lambda^{2\bE}} = \sum_{n=1}^{\infty}\prcondb{\sum_{i=1}^{N+1} \textsf{Sub}^{(i)'}_{\mathcal{E}} \geq R \lambda^{2\bE}}{N+1=n}{} \prb{N+1=n} &\leq \sum_{n=1}^{\infty} \Big(\frac{3}{2}\Big)^n e^{-\theta \lambda^{2\bE}} \Big(\frac{1}{2}\Big)^n \\
&= C_{\theta}'' e^{-\theta \lambda^{2\bE}}.
\end{split}
\end{align}
To conclude, we combine the results of (\ref{eqn:good loop exist bound}), (\ref{eqn:N geo bound}) and (\ref{eqn:Qi bound}) by writing
\begin{align*}
\prb{|I_R| \geq 3R\lambda^{2\bE}} &\leq \prb{M = 0} + \prcondb{N+1 \geq \lambda^{2\bE}}{M>0}{} + \prcondb{\sum_{i=1}^{N+1} Q^{(i)} \geq R \lambda^{2\bE}}{M>0}{} \\
&\leq C \big( e^{-c\lambda^{\bd(\alpha - 1)}} + C'_{\theta} e^{-c \lambda^{2\bE}} \big).
\end{align*}
\end{proof}
\end{lem}

The second technical lemma will allow us to bound the total progeny of $T$ by comparing it to a subcritical Galton-Watson tree with Poisson offspring distribution.

\begin{lem}\label{lem:iterative progeny Poisson compare}
Let $\tilde{\Ta}$ be a compact stable tree, and $\tilde{\La}$ be its corresponding compact stable looptree, both coded by the same excursion $\mathcal{E}$ under the \Ito measure $N(\cdot)$ but conditioned to have lifetime $\zeta$ at least $\kappa^{-1} r^{\alpha} \lambda^{1-\bf-\bg}$. Let $\rho$ be the root of $\tilde{\La}$, and perform a Williams' decomposition of $\tilde{\La}$ along its W-loopspine. Let $N$ denote the number of resulting sublooptrees obtained that are of mass at least $\kappa^{-1} r^{\alpha} \lambda^{1-\bf-\bg}$ and are also grafted to the W-loopspine within distance $r$ of the root of $\tilde{\La}$. Then
\begin{align*}
\prb{N \geq n} \leq  Ce^{-c \lambda^{1 - \bf - \bg - \alpha \bd}} + C(e^{-c\lambda^{\bd(\alpha - 1)}} + e^{- c\lambda^{2\bE}}) + \pr{ \textsf{Poisson}(K_{\alpha}) \geq n},
\end{align*}
where $K_{\alpha} = 3\big(\Gamma \big(1-\frac{1}{\alpha}\big)\big)^{-1} \kappa^{\frac{1}{\alpha}}$. The constants $c$ and $C$ also depend on $\kappa$, but $\kappa$ is fixed and the precise dependence will not be important, so we suppress this notationally.
\begin{proof}
Let $H$ be the height of $\tilde{\Ta}$, and let $\EE^{(H)}$ be the rescaled excursion given by
\[
\EE^{(H)} = \big(H^{\frac{-1}{\alpha - 1}} \EE_{H^{\frac{\alpha}{\alpha - 1}}t}\big)_{0 \leq t \leq H^{\frac{-\alpha}{\alpha - 1}} \zeta}.
\]
The excursion $\EE^{(H)}$ codes a tree conditioned to have height $1$ (this can be seen from combining \cite[Lemma 5.8, Part 3]{GoldHaasExtinctionStable} with \cite[Equation (26)]{DuqWangDiameter}, for example). Moreover, in the corresponding looptree, $N$ now denotes the number of sublooptrees of mass at least $H^{\frac{-\alpha}{\alpha - 1}} \kappa^{-1} r^{\alpha} \lambda^{1-\bf-\bg}$ that are grafted to the W-loopspine within distance $R := H^{\frac{-1}{\alpha - 1}}r$ of $\rho$.

We wish to bound $R$ so that we can apply Lemma \ref{lem:segment length bound}. To do this, note by monotonicity and scaling invariance that 
\begin{align*}
\prcondb{R \geq \lambda^{-\bd}}{\zeta \geq \kappa^{-1} r^{\alpha} \lambda^{1-\bf-\bg}}{}&\leq \prcondb{H \leq \kappa^{\frac{\alpha - 1}{\alpha}} \lambda^{\frac{-(\alpha - 1)({1-\bf-\bg})}{\alpha}} \lambda^{\bd(\alpha - 1)}}{\zeta = 1}{} \leq Ce^{-c \lambda^{1 - \bf - \bg -\alpha \bd}},
\end{align*}
where the final line holds by \cite[Theorem 1.8]{DuqWangDiameter}. Then, conditioning on $R \leq \lambda^{-\bd}$ (i.e. $H \geq r^{\alpha - 1} \lambda^{\bd(\alpha - 1)}$), we have by Lemma \ref{lem:segment length bound} that
\begin{align*}
\prcondb{|I_R| \geq 3R\lambda^{2\bE}}{R \leq \lambda^{-\bd}}{} = \prcondb{|I_R| \geq 3H^{\frac{-1}{\alpha - 1}}r\lambda^{2\bE}}{H \geq r^{\alpha - 1} \lambda^{-\bd(\alpha - 1)}}{} \leq C(e^{-c\lambda^{\bd(\alpha - 1)}} + e^{- c\lambda^{2\bE}}).
\end{align*}
By Theorem \ref{thm:AbDel Williams Decomp}(ii), the sublooptrees grafted to the W-loopspine at points in $I_R$ form a Poisson process of sublooptrees coded by the \Ito excursion measure, but thinned so that none have height large enough to violate the condition that the end of the W-spine corresponds to the point of maximal height in the tree. We can therefore stochastically dominate this by the unthinned (classical) version of the \Ito excursion measure of Section \ref{sctn:Ito exc}. Since $N(\zeta \geq t) = \hat{C}_{\alpha} t^{\frac{-1}{\alpha}}$, where $\hat{C}_{\alpha} = (\Gamma (1-\frac{1}{\alpha}))^{-1}$ (e.g. see \cite[Proposition 5.6]{GoldHaasExtinctionStable}), it follows that conditionally on $|I_R| \leq 3R \lambda^{2\bE} = 3H^{\frac{-1}{\alpha - 1}}r\lambda^{2\bE}$, $N$ is stochastically dominated by a Poisson random variable with parameter:
\begin{align*}
3\hat{C}_{\alpha} (\kappa^{-1} H^{\frac{-\alpha}{\alpha - 1}}r^{\alpha} \lambda^{1-\bf-\bg})^{\frac{-1}{\alpha}} H^{\frac{-1}{\alpha - 1}}r\lambda^{2\bE} = 3\hat{C}_{\alpha} \kappa^{\frac{1}{\alpha}}.
\end{align*}
To conclude, we write:
\begin{align*}
\prb{N \geq n} &\leq \prcondb{H \leq r^{\alpha - 1} \lambda^{\bd(\alpha - 1)}}{\zeta \geq \kappa^{-1} r^{\alpha} \lambda^{1-\bf-\bg}}{} + \prcondb{|I_R| \geq 3H^{\frac{-1}{\alpha - 1}}r\lambda^{2\bE}}{H \geq r^{\alpha - 1} \lambda^{-\bd(\alpha - 1)}}{} \\
&\ \ \ \ \ \ \ \ \ \ + \pr{ \textsf{Poisson}(3\hat{C}_{\alpha} \kappa^{\frac{1}{\alpha}}\lambda^{2\bE - \frac{1}{\alpha}({1-\bf-\bg})}) \geq n} \\
&\leq Ce^{-c \lambda^{1 - \bf - \bg - \alpha \bd}} + C(e^{-c\lambda^{\bd(\alpha - 1)}} + e^{- c\lambda^{2\bE}}) + \pr{ \textsf{Poisson}(3\hat{C}_{\alpha} \kappa^{\frac{1}{\alpha}}) \geq n}.
\end{align*}
\end{proof}
\end{lem}

Armed with these lemmas, there are now two key steps to the main argument. One of these is to bound the number of times we need to reiterate around larger sublooptrees as described by the algorithm, and the other is to bound the contributions of smaller terms from each of these iterations.

As is usual, we will let $|T|$ denote the total progeny of the tree $T$. The first main result is the following.

\begin{prop}\label{prop:prog bound}
There exist constants $c, C \in (0, \infty)$ such that
\begin{align*}
\prb{|T| \geq \lambda^{\bf}} \leq \lambda^{\bf} \Big[ Ce^{-c \lambda^{1 - \bf - \bg \alpha \bd}} + C(e^{-c\lambda^{\bd(\alpha - 1)}} + e^{- c\lambda^{2\bE}})\Big] + Ce^{-\lambda^{\bf}} \leq C\lambda^{\frac{\alpha - 1}{4\alpha - 3}} e^{-c\lambda^{\frac{\alpha - 1}{4\alpha - 3}}}.
\end{align*}
\begin{proof}
The main ingredient in this proof is the main theorem of Dwass from \cite{DwassProg}, that for a Galton-Watson tree with total progeny \textsf{Prog} and offspring distribution $\xi$, it holds that
\[
\pr{ \textsf{Prog} = k} = \frac{1}{k} \pr{\sum_{i=1}^k \xi^{(i)} = k-1},
\]
where the $\xi^{(i)}$ are i.i.d. copies of $\xi$. In particular, if $\xi \sim \textsf{Poisson}(\theta)$ for some $\theta < \frac{1}{e^2}$ we see by writing the sum explicitly and applying Stirling's formula that
\begin{equation}\label{eqn:Dwass bound geq}
\pr{ \textsf{Prog} \geq k} = \sum_{j \geq k} \frac{1}{j} \pr{ \textsf{Poisson}(j \theta) = j-1} \leq \frac{c}{\theta} \sum_{j \geq k} j^{\frac{-3}{2}} (e \theta)^j \leq \frac{c}{\theta} k^{\frac{-3}{2}} (e \theta)^k.
\end{equation}

This isn't a priori applicable since in our case $T$ is not quite a Galton-Watson tree. However, it follows from Lemma \ref{lem:segment length bound} that for any $k>0$, we have
\begin{align*}
\prb{|T| \geq k} \leq k \Big[ Ce^{-c \lambda^{1-\bf-\bg- \alpha \bd}} + C(e^{-c\lambda^{\bd(\alpha - 1)}} + e^{- c\lambda^{2\bE}})\Big] + \pr{ |T'| \geq k},
\end{align*}
where $T'$ is a Galton-Watson tree with \textsf{Poisson}($K_{\alpha}$) offspring distribution. Accordingly, setting $\theta =  K_{\alpha}$ (which is less than $\frac{1}{e^2}$ by our choice of $\kappa$) and $k = \lambda^{\bf}$ we see that
\begin{align*}
\pr{ |T'| \geq k} \leq C e^{-\lambda^{\bf}}
\end{align*}
so combining with the above we deduce that
\begin{align*}
\pr{|T| \geq \lambda^{\bf}} &\leq \lambda^{\bf} \Big[ Ce^{-c \lambda^{1 - \bf - \bg - \alpha \bd}} + C(e^{-c\lambda^{\bd(\alpha - 1)}} + e^{- c\lambda^{2\bE}})\Big] + Ce^{ - \lambda^{\bf}},
\end{align*}
as claimed.
\end{proof}
\end{prop}

\begin{prop}\label{prop:iteration small terms prob bound}
Conditional on $|T| \leq \lambda^{\bf}$, we have that
\[
\prb{\exists u \in T: S_u \geq r^{\alpha} \lambda^{1-\bf}} \leq C\lambda^{\bf} \Big[ e^{-c \lambda^{1-\bf-\bg-\alpha \bd}} + (e^{-c\lambda^{\bd(\alpha - 1)}} + e^{- c\lambda^{2\bE}}) + e^{-c\lambda^{1-\bf-2\bE\alpha}} \Big] \leq C\lambda^{\frac{\alpha - 1}{4\alpha - 3}} e^{-c\lambda^{\frac{\alpha - 1}{4\alpha - 3}}}.
\]
\begin{proof}
Take $u \in T$, and let $L_u$ be the corresponding (sub)looptree that forms part of $\La$. By the same arguments used in Proposition \ref{prop:prog bound}, we can use Lemma \ref{lem:segment length bound} to show that, letting $R = H^{\frac{-\alpha}{\alpha - 1}}r$, we have
\begin{align*}
\prb{|I_R| \geq 3R \lambda^{2\bE}} &\leq \prb{R \geq \lambda^{-\bd}} + \prcondb{|I_R| \geq 3R \lambda^{2\bE}}{R \geq \lambda^{-\bd}}{} \\
&\leq Ce^{-c \lambda^{1-\bf-\bg-\alpha \bd}} + C(e^{-c\lambda^{\bd(\alpha - 1)}} + e^{- c\lambda^{2\bE}})
\end{align*}
We now condition on $\{ |I_R| \geq 3R \lambda^{2\bE}\}$. Again dominating the thinned \Ito excursion measure by the classical \Ito excursion measure as we did in the proof of Proposition \ref{prop:iteration small terms prob bound}, we have that $H^{\frac{-\alpha}{\alpha - 1}} S_u$ is stochastically dominated by a subordinator with \Levy measure $C_{\alpha} x^{\frac{-1}{\alpha}-1} \mathbbm{1}{\{x \leq H^{\frac{-\alpha}{\alpha - 1}} \kappa^{-1} r^{\alpha} \lambda^{1-\bf-\bg} \}} dx$, run up until the time $3H^{\frac{-1}{\alpha - 1}}r\lambda^{2\bE}$. Note that the \Levy measure coincides with that of an $\alpha^{-1}$-stable subordinator, conditioned to have no jumps greater than $\kappa^{-1} H^{\frac{-\alpha}{\alpha - 1}} r^{\alpha} \lambda^{1-\bf-\bg}$.

Hence, letting \textsf{Subord} be an $\alpha^{-1}$-stable subordinator, and conditioning on $|I_R| \leq 3R\lambda^{2\bE}$, we have by scaling invariance that:
\begin{align*}
&\prcondb{S_u \geq r^{\alpha} \lambda^{1-\bf}}{|I_R| \leq R\lambda^{2\bE}}{} \\
&\hspace{1cm}= \prcondb{H^{\frac{-\alpha}{\alpha - 1}} S_u \geq H^{\frac{-\alpha}{\alpha - 1}}  r^{\alpha} \lambda^{1-\bf}}{|I_R| \leq R\lambda^{2\bE}}{} \\
&\hspace{1cm}\leq \prcondb{\textsf{Subord}_{H^{\frac{-1}{\alpha - 1}}r\lambda^{2\bE}} \geq H^{\frac{-\alpha}{\alpha - 1}}  r^{\alpha} \lambda^{1-\bf}}{\text{no jumps greater than } \kappa^{-1} H^{\frac{-\alpha}{\alpha - 1}} r^{\alpha} \lambda^{1-\bf-\bg} }{} \\
&\hspace{1cm}\leq \prcondb{\textsf{Subord}_{1} \geq \kappa\lambda^{1-\bf-2\bE\alpha}}{ \text{no jumps greater than } 1}{}.
\end{align*}
By the arguments of Lemma \ref{lem:osc}, it follows that there exists $\theta > 0$ such that $\Eb{e^{\theta \textsf{ Subord}_1}}<\infty$ when conditioned to have no jumps greater than $1$, so, as before, the latter probability can be bounded by $Ce^{-c\lambda^{1-\bf-2\bE\alpha}}$.

Combining these, we see that 
\begin{align*}
\prb{S_u \geq r^{\alpha} \lambda^{1-\bf}} \leq  Ce^{-c \lambda^{1-\bf-\bg-\alpha \bd}} + C(e^{-c\lambda^{\bd(\alpha - 1)}} + e^{- c\lambda^{2\bE}}) + Ce^{-c\lambda^{1-\bf-2\bE\alpha}}.
\end{align*}
The result follows on taking a union bound.
\end{proof}
\end{prop}

We are now able to prove Proposition \ref{prop:prob bound sup UB}.

\begin{proof}[Proof of Proposition \ref{prop:prob bound sup UB}]
Note that, on the events $\{|T| \leq \lambda^{\bf}\}$ and $\{S_u \leq r^{\alpha} \lambda^{1-\bf} \forall u \in T\}$, we have that
\begin{align*}
\nu (B(\rho, r)) \leq \sum_{u \in T} S_u \leq |T| \sup_{u \in T} S_u \leq \lambda^{\bf} r^{\alpha} \lambda^{1-\bf} = r^{\alpha} \lambda.
\end{align*}
Hence, by combining the results of Propositions \ref{prop:prog bound} and \ref{prop:iteration small terms prob bound}, we see that
\begin{align}\label{eqn:overall prob bound}
\begin{split}
\prb{\nu (B(\rho, r))  \geq r^{\alpha} \lambda} &\leq \prb{|T| \geq \lambda^{\bf} \text{ or } S_u \geq r^{\alpha} \lambda^{1-\bf} \text{ for some } u \in T} \\
&\leq C\lambda^{\bf} \Big[ e^{-c \lambda^{1-\bf-\bg-\alpha \bd}} + (e^{-c\lambda^{\bd(\alpha - 1)}} + e^{- c\lambda^{2\bE}}) + e^{-c\lambda^{1-\bf-2\bE\alpha}} \Big] + Ce^{-\lambda^{\bf}} \\
&C\lambda^{\frac{\alpha - 1}{4\alpha - 3}} e^{-c\lambda^{\frac{\alpha - 1}{4\alpha - 3}}}.
\end{split}
\end{align}
\end{proof}

\subsection{Supremal Lower Bounds}\label{sctn:sup vol LBs}
In this section, we prove (\ref{eqn:glob sup LB}) and (\ref{eqn:loc sup LB}). We start by proving a probabilistic bound. The proof relies on using the relation (\ref{eqn:d osc compare}) to compare volume fluctuations with stable \Levy oscillations.

\begin{prop}\label{prop:sup LB prob}
There exist constants $c, C \in (0, \infty)$ such that for all $r<1$ and all $\lambda > 1$, \[
\prb{\nu (B(p(U), \frac{1}{2} r)) \geq r^{\alpha} \lambda} \geq C e^{-c \lambda}.
\]
\begin{proof}
As explained in Section \ref{sctn:inf LBs}, we know that
\[
\{ \osc _{[ p(U), p(U) + r^{\alpha} \lambda]} \X \leq r \} \subset \{\nu (B(p(U), r)) \geq r^{\alpha} \lambda\}.
\]
It follows from the scaling relation $p_t(x) = t^{\frac{-1}{\alpha}}p_1(xt^{\frac{-1}{\alpha}})$ that $\frac{p_{1 - r^{\alpha} \lambda_r}(r)}{p_1(0)} \wedge \frac{p_{1 - r^{\alpha} \lambda_r}(-r)}{p_1(0)} \rightarrow 1$ as $r \downarrow 0$ whenever $\lambda_r = o(r^{-\alpha})$. Consequently, by applying the Vervaat transform and the absolute continuity relation, we have for all sufficiently small values of $r$ that
\begin{align*}
\prb{\nu (B(p(U), r)) \geq r^{\alpha} \lambda} \geq \prb{ \osc _{[ p(U), p(U) + r^{\alpha}\lambda]} \X \leq r} &\geq \Big\{ \frac{p_{1 - r^{\alpha} \lambda}(r)}{p_1(0)} \wedge \frac{p_{1 - r^{\alpha} \lambda}(-r)}{p_1(0)} \Big\} \prb{ \osc _{[ 0, r^{\alpha}\lambda]} X \leq r} \\
&\geq \frac{1}{2} \prb{ T^0_{[-1,1]} > 2^{\alpha} \lambda},
\end{align*}
where $T^x_I$ denotes the exit time of $X$ from the interval $I$, conditioned on $X_0 = x$.

It follows from the discussion below Theorem 2 of \cite{Bertoin2Side} that $\prb{ T^0_{[-1,1]} > 2^{\alpha} \lambda} \sim c_1 e^{-c_2 \lambda}$, for some deterministic constants $c_1, c_2$. The proposition follows.
\end{proof}
\end{prop}

We cannot directly use Proposition \ref{prop:sup LB prob} to prove the lower supremal bounds since we do not have the necessary independence to immediately apply the second Borel-Cantelli lemma. However, we can achieve this by performing a spinal decomposition, and considering volumes in different fragments, which are independent of each other. To do this, we will use a spinal decomposition to a uniform point, detailed below. The advantage of this over the Williams' decomposition in this case is that it allows us to control the masses of individual fragments more explicitly.

\subsubsection{Spinal Decomposition from the Root to a Uniform Point}\label{sctn:spinal decomp}

In \cite{HPWSpinPart}, it was shown that if we define the spine of a stable \Levy tree $\Ta$ to be the unique path from the root to a uniform point, then $\Ta$ can be broken along this spine and that the resulting fragments form a collection of smaller \Levy trees. This gives a similar decomposition result for looptrees.

We define the decomposition formally as follows. Let $U \sim \textsf{Uniform}([0,1])$, so that $p(U)$ is a uniformly chosen vertex in $\L_{\alpha}$, and let $\rho$ be its root. We say that the \textit{loopspine} from $\rho$ to $p(U)$, denoted $S_{U}$, is the closure of the set of loops corresponding to ancestors of $U$. To form the \textit{fine spinal decomposition}, first let $(L^o_i)_{i =1}^{\infty}$ be the connected components of $\L_{\alpha} \setminus S_{U}$, and then for each $i \in \N$ let $L_i$ be the closure of $L^o_i$ in $\L_{\alpha}$. Then almost surely, each $L_i$ can be written in the form $L^o_i \overset{.}{\cup} \rho_i$ for some $\rho_i \in \L_{\alpha} \setminus L^o_i$. Note that by uniform rerooting invariance, we can also replace the root with an independent uniform point in $\La$.

If the fragment $L_i$ has mass $\alpha_i$, define a metric $d_i$ and a measure $\nu_i$ on $L_i$ by
\[
d_i = {\alpha}_i^{\frac{-1}{\alpha}} d|_{L_i}, \ \ \ \ \ \ \nu_i = \frac{\nu( \cdot \cap L_i)}{\alpha_i}.
\]
Additionally let $p(U_i)$ be a vertex in $L_i$ chosen uniformly according to $\nu_i$. We then have the following result, which is a consequence of \cite[Corollary 10]{HPWSpinPart}, which gives the corresponding result for \Levy trees.

\begin{theorem}\label{thm:spinal decomp}
$\{ (L_i, d_{i}, \nu_i, \rho_i, p(U_i))\}_{i \in \N}$ is a collection of independent copies of $(\L_{\alpha}, d, \rho, \nu, p(U))$. Moreover, the entire family is independent of $(\alpha_i)_{i \in \N}$, which has a Poisson-Dirichlet $(\alpha^{-1}, 1 - \alpha^{-1})$ distribution.
\end{theorem}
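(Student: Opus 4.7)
The plan is to deduce Theorem \ref{thm:spinal decomp} directly from \cite[Corollary 10]{HPWSpinPart}, which gives the analogous decomposition for the stable Lévy tree $\Ta$, by transporting that statement through the common Lévy-excursion coding of $\Ta$ and $\La$. The key observation is that both spaces are coded by the same excursion $\X$, and the quotient maps $\pi$ and $p$ (together with $\mu$ and $\nu$) are measurable functionals of $\X$; hence any almost-sure fragmentation of $[0,1]$ into sub-excursions that produces i.i.d.\ rescaled copies of $\Ta$ must, applied to the same sub-excursions, produce i.i.d.\ rescaled copies of $\La$.

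First I would set up the correspondence between the tree spine and the loopspine. If $U \sim \textsf{Uniform}([0,1])$, then the geodesic $[[\pi(0), \pi(U)]]$ in $\Ta$ visits precisely those branch points coded by jump times $t \in [0,1]$ with $t \preceq U$, i.e.\ $X^{\exc}_{t^-} \leq \inf_{[t, U]} X^{\exc}$. Under $p$ each such branch point becomes a loop of length $\Delta_t$, and the closure of the union of these loops is exactly the loopspine $S_U$ introduced before the statement. Cutting $\Ta$ along its spine yields a countable family $(T_i)_{i \in \N}$ of subtrees, each coded by a sub-excursion $e_i$ of $\X$ consisting of a maximal interval $(g_i, d_i) \subset [0,1]$ on which $X^{\exc}$ stays above its running infimum relative to the spine; the Lebesgue mass of that interval is exactly $\alpha_i = \mu(T_i) = \nu(L_i)$. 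The corresponding looptree fragment $L_i$ is then the closure of $p((g_i, d_i))$, and its root $\rho_i$ is the point on $S_U$ where it is attached.

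Next, on each fragment I would check the scaling. By the scaling property of $X^{\exc}$ noted after Theorem \ref{thm:Vervaat} and the sub-excursion description of $e_i$ (standard and already used in \cite{HPWSpinPart} for the tree), conditionally on $\alpha_i$ the excursion $s \mapsto \alpha_i^{-1/\alpha} e_i(\alpha_i s)$ is distributed as $\X$. Because the looptree distance \eqref{eqn:d} depends only on the values and jumps of the excursion and is $1/\alpha$-homogeneous under the scaling $e \mapsto \lambda^{1/\alpha} e(\cdot/\lambda)$ (every $\delta_t$ scales by $\lambda^{1/\alpha}$, and the $d_0$-sums add up accordingly), rescaling the excursion by $\alpha_i^{1/\alpha}$ in space and $\alpha_i$ in time rescales $(L_i, d|_{L_i}, \nu|_{L_i}, \rho_i)$ into a copy of $(\La, d, \nu, \rho)$, matching the definition of $d_i$ and $\nu_i$ in the statement. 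A vertex $p(U_i)$ chosen uniformly according to $\nu_i$ corresponds to a uniform point in the rescaled excursion, which is the analogue of $p(U)$. The independence across $i$ and the $\PD(\alpha^{-1}, 1 - \alpha^{-1})$ law of $(\alpha_i)_{i \in \N}$, together with their joint independence from the family of rescaled sub-excursions, are inherited verbatim from \cite[Corollary 10]{HPWSpinPart}, since these statements pertain to the excursion $\X$ and the spinal interval-partition alone, which are the same for $\Ta$ and $\La$.

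The main point requiring care is the verification that the metric $d$ truly decomposes along the loopspine, so that $d|_{L_i}$ really is the intrinsic looptree distance on $L_i$ rather than some induced distance involving detours through $S_U$. This reduces to checking, using formula \eqref{eqn:d} and the descent description of Proposition \ref{prop:descent PP}, that for $s, t \in (g_i, d_i)$ no geodesic in $\La$ from $p(s)$ to $p(t)$ passes through $S_U \setminus \{\rho_i\}$, and that, conversely, the sub-excursion $e_i$ produces exactly the distances prescribed by \eqref{eqn:d} on the rooted fragment. Once this is in place the measurability argument yields the theorem with essentially no further work, since all remaining ingredients — independence, identical distribution, and the Poisson–Dirichlet law — transfer from the tree result via the common excursion coding.
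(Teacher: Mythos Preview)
Your proposal is correct and follows the same route as the paper: the paper does not give a proof but simply states that the result ``is a consequence of \cite[Corollary 10]{HPWSpinPart}, which gives the corresponding result for L\'evy trees,'' and your write-up spells out precisely why that citation suffices, via the common excursion coding and the $1/\alpha$-homogeneity of the looptree distance. The extra care you take in checking that $d|_{L_i}$ is the intrinsic looptree metric on each fragment is a useful elaboration of what the paper leaves implicit.
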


\subsubsection{Main Argument for Supremal Lower Bound}

Using Theorem \ref{thm:spinal decomp}, we can construct an argument as follows. First take some $\epsilon>0$ with $0 < \epsilon \ll 1$. Given $r \in (0,1)$, and $\lambda_r$ a decreasing function of $r$ such that $\frac{\lambda_{r}}{\lambda_{2r}} \rightarrow 1$ as $r \downarrow 0$, define the interval $J_r = [r^{-1} \lambda_r^{\frac{-(1+\epsilon)}{\alpha}}, \frac{3}{2} r^{-1} \lambda_r^{\frac{-(1+\epsilon)}{\alpha}}]$. It is easy to verify that for all sufficiently small $r$, the intervals $J_r$ and $J_{2r}$ are disjoint.

Our strategy is as follows. We use the spinal decomposition of Section \ref{sctn:spinal decomp}, between $p(U)$ and an independent uniform point $p(V)$. Recall the GEM distribution introduced there, that gives a size biased representation $(M_1, M_2, \ldots )$ of the Poisson-Dirichlet distribution. Letting $I'_r$ denote the segment of loopspine that intersects $B(p(U), r)$ (analagously to $I_R$ defined in Section \ref{sctn:sup UB actual arg}), there is probability of order at least $\lambda^{-(\frac{1}{\alpha} + \epsilon)}$ that there is a $n \in J_r$ such that the sublooptree with Poisson-Dirichlet mass given by the GEM random variable $M_n$ is grafted to the loopspine at a point in $I'_{\frac{r}{2}}$. Say this sublooptree is $L_{i,r}$, with root $\rho_i$ being the point at which it is grafted to the loopspine. The mass of the ball $B(p(U), r)$ is then lower bounded by the mass of $B(\rho_i, \frac{1}{2}r) \cap L_{i,r}$. We can then rescale the looptree $L_{i,r}$, and the corresponding unit ball, to compute that this mass is at least $r^{\alpha} \lambda$ with at least polynomial probability. We repeat this argument along the sequence $r_n = 2^{-n}$. Since the corresponding intervals $J_{r_n}$ are disjoint (provided we start at a sufficiently large value of $n$), and the rescaled looptrees from the spinal decomposition of Section \ref{sctn:spinal decomp} are independent, we obtain the necessary independence to apply the second Borel-Cantelli Lemma.

\begin{proof}[Proof of supremal lower bound in Theorem \ref{thm:main local}]
Let $L= \sum_{U \wedge V \prec t \preceq U} \Delta_t + \sum_{U \wedge V \prec t \preceq V} \Delta_t + \delta_{U \wedge V}(x_{U \wedge V}^U, x_{U \wedge V}^V)$ be the length of the loopspine, and let $N_r$ be the total number of sublooptrees in the spinal decomposition that are incident to the loopspine at a point in $I'_{\frac{r}{2}}$ and have mass corresponding to a GEM index in $J_r$. Then, conditional on $L=l$, $N_r$ stochastically dominates a random variable that is Binomial($\lfloor \frac{1}{2} r^{-1} \lambda_r^{\frac{-(1+\epsilon)}{\alpha}} \rfloor, r l^{-1}$). Hence, the probability that this number is non-zero is at least of order $l^{-1} \lambda_r^{\frac{-(1+\epsilon)}{\alpha}}$.

Conditional on $\{N_r \geq 1\}$, let $n_r$ be an index in $J_r$ with corresponding sublooptree $L_r$ that is incident to the loopspine at a point in $I_{(\frac{r}{2})}'$. Note that $\nu (L_r)$ stochastically dominates the Poisson-Dirichlet GEM weight $M_{k_r}$, where $k_r = \frac{3}{2} r^{-1} \lambda_r^{\frac{-(1+\epsilon)}{\alpha}}$, and hence we have by Lemma \ref{lem:Gem paley zig} that there exists $c_p > 0$ such that
\begin{align*}
\prb{\nu (L_r) \geq \frac{1}{2} r^{\alpha} \lambda_r^{1+\epsilon}} \geq \prb{M_{k_r} \geq \frac{1}{2} r^{\alpha} \lambda_r^{1+\epsilon}} \geq c_p.
\end{align*}

Conditional on there being such a sublooptree $L_{r}$, say of mass $m \geq \frac{1}{2} r^{\alpha} \lambda_r^{1+\epsilon}$, we know that
\[
\prb{\nu (B(\rho_i, \frac{1}{2}r) \cap L_{i,r}) \geq r^{\alpha} \lambda_r} = \prb{\nu (B(\rho, \frac{1}{2}rm^{\frac{-1}{\alpha}})) \geq m^{-1} r^{\alpha} \lambda_r} \geq C e^{-c \lambda_{r_n}}
\]
by Proposition \ref{prop:sup LB prob} (note in particular that $m^{-1}r^{\alpha} \lambda_r \leq 2 \lambda_r^{ - \epsilon} \rightarrow 0$ as $r \downarrow 0$ so it is fine to apply the result here).

Hence, letting $A_r$ be the event that there exists a sublooptree incident to the loopspine at a point in $I'_{\frac{r}{2}}$ with GEM index $n_r \in J_r$, and such that the ball of radius $\frac{1}{2}r$ in this sublooptree has mass at least $r^{\alpha}\lambda_r$, we deduce that $\prb{A_r} \geq C l^{-1} \lambda_{r}^{\frac{-(1 + \epsilon)}{\alpha}} e^{-c \lambda_r}$.

Now, letting $r_n = 2^{-n}$, we have that there exists a finite $N$ such that the intervals $J_{r_n}$ and $J_{r_m}$ are disjoint whenever $m, n \geq N$, and hence since each sublooptree is distributed uniformly around the perimeter of the loopspine independently of the others, then the events that there exist sublooptrees with GEM index in $J_{r_n}$ (respectively $J_{r_m}$) within distance $\frac{1}{2}r_n$ (respectively $\frac{1}{2}r_m$) from the root are independent events. Moreover, if the sublooptrees described in the events $A_{r_n}$ and $A_{r_m}$ exist, then they are independent once rescaled by Proposition \ref{thm:spinal decomp}. Thus the only dependence between the events $A_{r_n}$ and $A_{r_m}$ is in whether these sublooptrees have masses greater than $c'r_n^{\alpha} \lambda_{r_n}^{1 + \epsilon}$ (respectively $c'r_m^{\alpha} \lambda_{r_m}^{1 + \epsilon}$), but here the only dependence is that all the Poisson-Dirichlet masses must sum to $1$, and hence in actual fact $\prcondb{A_{r_n}}{A_{r_m}^c \text{ for all } N \leq m < n}{} \geq \prb{A_{r_n}}$ by an application of Lemma \ref{lem:Gem correlations}.

Hence,
\[
\sum_{n=N}^{\infty} \prcondb{A_{r_n}}{A_{r_m}^c \text{ for all } N \leq m < n}{} \geq \sum_{n=N}^{\infty} \prb{A_{r_n}} \geq \sum_{n=N}^{\infty} C l^{-1} \lambda_{r_n}^{\frac{-(1 + \epsilon)}{\alpha}} e^{-c \lambda},
\]
so setting $\lambda_r = \frac{C_2^{-1}}{2}(\log \log r^{-1})$ we see that $\prcondb{A_{r_n} \text{ i.o.}}{\text{Length}(S_{\sigma}) = l}{} = 1$. Since $L$ is almost surely finite, we can integrate over possible values of $l$ to deduce that $\prb{A_{r_n} \text{ i.o.}} = 1$.

The result at (\ref{eqn:loc sup LB}) follows by applying Fubini's theorem similarly to the previous extremal volume bounds.
\end{proof}

\begin{figure}[h]
\centering
\includegraphics[width=16cm]{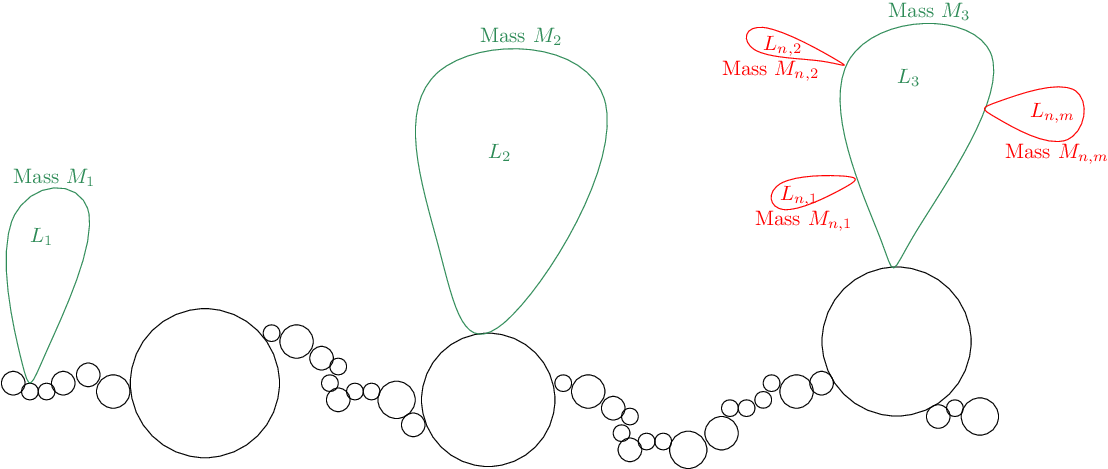}
\caption{Illustration of the double decomposition (simplified picture).}\label{fig:doubledecomp1}
\end{figure}

The proof of the global bound given in (\ref{eqn:glob sup LB}) uses a similar decomposition approach, but this time we perform two subsequent spinal decompositions. This is illustrated in Figure \ref{fig:doubledecomp1}. Firstly, let $(M_1, M_2, \ldots)$ denote the GEM masses obtained on performing a first spinal decomposition of $\La$. Then, for each of the resulting fragments $(L_1, L_2, \ldots)$, rescale to obtain a sequence of independent stable looptrees $(\La^1, \La^2, \ldots)$, each with mass $1$. For each $n \in \N$, we perform a further spinal decomposition of $\La^n$ and denote the resulting GEM masses by $\{M_{n,1}, M_{n,2}, \ldots \}$, and corresponding looptrees by $\{L_{n,1}, L_{n,2}, \ldots \}$, and by $\{\L_{\alpha}^{n,1}, \La^{n,2}, \ldots \}$ after rescaling again to have mass $1$. We take $r_n = 2^{-n}, R_n = M_n^{\frac{-1}{\alpha}}r_n$, $\lambda_n = C^*\log r_n^{-1}$ and $\lambda_n' = C^*\log R_n^{-1}$, where $C^*$ is a specific constant to be specified later. We also define the events:
\begin{align*}
B_n &= \{r_n^{\alpha} \leq M_n^{2} \},
&&C_{n,m} = \{ M_{n,m} \geq R_n^{\alpha} \lambda_n \}, \\
D_{n,m} &= \{ \nu (B(\rho_{n,m}, R_n) \cap L_{n,m}) \geq R_n^{\alpha}\lambda_n \}, 
&&A_{n,m} = C_{n,m} \cap D_{n,m}.
\end{align*}
Also set $N_n = 2^{\frac{-1}{\alpha}} r_n^{-1} (\log r_n^{-1})^{\frac{-1}{\alpha}}$, and define the event
\[
A_n = B_n \cap \Big( \bigcup_{m=1}^{N_n} A_{n,m} \Big).
\]
The key point is to observe that $A_n \subset \{ \sup_{u \in \La} \nu (B(u,r_n)) \geq r_n^{\alpha} \lambda_n \}$, so it is sufficient to show that $\prb{A_n \text{ i.o.}}=1$. The next lemma gives a means to overcome the dependencies between the GEM masses and apply the second Borel-Cantelli Lemma. It should be intuitively clear, but we give a proof for completeness.

\begin{lem}\label{lem:prob indep global sup LB}
Let $A_n, B_n, A_{n,m}, C_{n,m}, D_{n,m}, N_n$ be as above. Then 
\begin{enumerate}[(i)]
\item $ \prcondb{A_n}{A_m^c \forall m < n}{} \geq \prb{A_n}$,
\item $\prcondb{A_{n,m}}{A_{n,l}^c \forall l < m}{} \geq \prb{A_{n,m}}$.
\end{enumerate}
\begin{proof}
First, note that since the individual looptrees in the spinal decomposition are independent of each other and of their original masses once rescaled, we can make the following observations:
\begin{itemize}
\item $A_{n,m}$ is independent of $B_l$ for all $m$ and all $l \leq n$,
\item $B_n$ is independent of $A_{l,m}$ for all $m$ and all $l \leq n$,
\item Conditional on $C_{n,l}$, $D_{n,l}$ is independent of $D_{n,k}$ for all $k<l$.
%\item $C_{n,l}$ is independent of $C_{n,k}$ for all $k,l$,
%\item $C_{n,l}$ is independent of $D_{n,k}$ for all $k,l$,
\end{itemize}
Figure \ref{fig:doubledecomp1} may be helpful to keep track of the dependencies. In fact, the only dependence between these events is of the form described by Lemma \ref{lem:Gem correlations}.

%\begin{figure}
%\includegraphics[width=8cm]{doubledecomp}
%\label{fig:double decomp dependencies}
%\caption{Illustrative figure}
%\end{figure}

We start by proving $(i)$. First note that by the first independence stated above, we have that
\begin{align}\label{eqn:indep correlation split}
\begin{split}
\prcondb{A_n}{A_m^c \forall m < n}{} &= \prcondb{B_n \cap \Big( \bigcup_{m=1}^{N_n} A_{n,m} \Big)}{A_m^c \forall m < n}{} \\
&= \prcondb{B_n}{A_m^c \forall m < n}{}\prcondb{\bigcup_{m=1}^{N_n} A_{n,m}}{A_m^c \forall m < n}{} \\
&= \prcondb{B_n}{A_m^c \forall m < n}{}\prcondb{B_n}{A_m^c \forall m < n}{}\prcondb{\bigcup_{m=1}^{N_n} A_{n,m}}{A_m^c \forall m < n}{}.
\end{split}
\end{align}
We focus on the first term in the final line above. By the second independence stated above, we have that
\begin{align*}
\prcondb{B_n}{A_l^c \forall l < n}{} &= \prcondb{B_n}{( \cup_m A_{l,m})^c \sqcup (( \cup_m A_{l,m}) \cap B_l^c) \ \forall \ l < n}{} \\
&= \sum_{\omega \in \{0,1\}^{n-1}}\prcondb{B_n}{E(\omega)}{}\prcondb{E(\omega)}{( \cup_m A_{l,m})^c \sqcup (( \cup_m A_{l,m}) \cap B_l^c) \ \forall \ l < n}{} \\
&= \sum_{\omega \in \{0,1\}^{n-1}}\prcondb{B_n}{E'(\omega)}{}\prcondb{E(\omega)}{( \cup_m A_{l,m})^c \sqcup (( \cup_m A_{l,m}) \cap B_l^c) \ \forall \ l < n}{},
\end{align*}
where for $\omega \in \{0,1\}^{n-1}$:
\begin{align*}
E(\omega) &= \Big( \bigcup_{l: \omega_l = 1} (\cup_m A_{l,m})^c \Big) \cap \Big( \bigcup_{l: \omega_l = 0} (( \cup_m A_{l,m}) \cap B_l^c) \Big), \hspace{5mm} E'(\omega) = \Big( \bigcup_{l: \omega_l = 0} (( \cup_m A_{l,m}) \cap B_l^c) \Big).
\end{align*}
Since $B_n$ is independent of $\cup_m A_{l,m}$, we can apply Lemma \ref{lem:Gem correlations} to deduce that $\prb{B_n} \geq \prcondb{B_n}{E'(\omega)}{}$ for all $\omega$. Substituting this into the final line, we obtain
\begin{align*}
\prcondb{B_n}{A_l^c \forall l < n}{} &\geq \sum_{\omega \in \{0,1\}^{n-1}}\prb{B_n}\prcondb{E(\omega)}{( \cup_m A_{l,m})^c \sqcup (( \cup_m A_{l,m}) \cap B_l^c) \ \forall \ l < n}{} = \prb{B_n}.
\end{align*}
We can use the same kind of expansion and apply Lemma \ref{lem:Gem correlations} to show that
\[
\prcondb{\bigcup_{m=1}^{N_n} A_{n,m}}{A_m^c \forall m < n}{} \geq \prb{\bigcup_{m=1}^{N_n} A_{n,m}}.
\]
Point $(i)$ then follows from the final line of (\ref{eqn:indep correlation split}). The proof of the point $(ii)$ is almost identical, so we omit it.
\end{proof}

\end{lem}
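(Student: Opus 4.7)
My plan is to exploit two structural facts in tandem. First, by Theorem~\ref{thm:spinal decomp}, conditional on the first-level GEM masses $(M_1, M_2, \ldots)$ the rescaled sub-looptrees $\La^1, \La^2, \ldots$ are i.i.d.\ copies of $\La$ (each carrying its own, independent, second-level spinal decomposition) and are independent of the first-level weights themselves. Second, by Lemma~\ref{lem:Gem correlations}, conditioning earlier GEM weights to lie below thresholds only increases the probability that a later weight exceeds its own threshold. The bullet points already recorded in the proof (the independence of $A_{n,m}$ from $B_l$, of $B_n$ from $A_{l,m}$, and the conditional independence of the $D_{n,l}$ given the $C_{n,l}$) are the precise manifestations of these two facts.

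For part (i), I would write $A_n = B_n \cap U_n$ with $U_n := \bigcup_{m} A_{n,m}$, and note that $B_n$ is a pure function of $M_n$ while $U_n$ depends on $M_n$ only through the rescaling $R_n$, and on the sub-looptree information inside $\La^n$, which is independent of all first-level masses other than $M_n$. Each earlier event factors as $A_l^c = U_l^c \sqcup (U_l \cap B_l^c)$, giving a disjoint partition of $\bigcap_{m<n} A_m^c$ into $2^{n-1}$ subcases indexed by $\omega \in \{0,1\}^{n-1}$, exactly the $E(\omega)$ in the text. On each subcase, one peels off the sub-looptree information (the $U_l$'s and $U_l^c$'s), which is independent of $B_n$ and does not change its probability; what remains to condition on is a collection of events of the form $\{M_l^2 < r_l^{\alpha}\}$ for the indices with $\omega_l = 0$, and Lemma~\ref{lem:Gem correlations} applied with the threshold sequence $g(l) = r_l^{\alpha/2}$ yields $\prcondb{B_n}{E(\omega)}{} \geq \prb{B_n}$. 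A parallel argument, noting that $U_n$ depends on the first-level GEM masses only through $M_n$ and otherwise only on the independent sub-looptree $\La^n$, yields $\prcondb{U_n}{E(\omega)}{} \geq \prb{U_n}$. Because $B_n$ and $U_n$ are independent, multiplying these two inequalities on each subcase $E(\omega)$ and summing over $\omega$ with the weights $\prb{E(\omega) \mid A_m^c \ \forall m<n}$ gives the claim.

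For part (ii), the argument is structurally identical but at one level below: work inside a fixed realisation of $\La^n$, replace the first-level GEM sequence $(M_k)$ by the second-level sequence $(M_{n,k})_k$, replace $B_n$ by $C_{n,m}$ (a pure function of $M_{n,m}$), and replace $U_n$ by $D_{n,m}$ (measurable with respect to the independent sub-sub-looptree $\La^{n,m}$ given $M_{n,m}$). The same disjoint decomposition $A_{n,l}^c = D_{n,l}^c \sqcup (D_{n,l} \cap C_{n,l}^c)$ and the same partition over $\omega \in \{0,1\}^{m-1}$ reduces the conditioning, up to an independent sub-looptree factor, to a conditioning of the second-level GEM sequence on past weights being small; Lemma~\ref{lem:Gem correlations} then closes the argument.

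The main obstacle I anticipate is purely bookkeeping: inside each subcase $E(\omega)$, one has to carefully separate the pieces of the conditioning that are measurable with respect to the GEM masses (where Lemma~\ref{lem:Gem correlations} applies) from the pieces measurable with respect to sub-looptree information (which must be independent of the target event). The coupling introduced by $R_n = M_n^{-1/\alpha} r_n$ is a potential trap, but since the conditioning is only on $M_l$ for $l<n$ and the sub-looptree $\La^n$ is independent of those masses, this coupling does not interfere with either step.
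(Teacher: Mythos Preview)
Your proposal is correct and follows essentially the same route as the paper: factorise $A_n = B_n \cap U_n$, decompose each $A_l^c = U_l^c \sqcup (U_l \cap B_l^c)$ into the $2^{n-1}$ subcases $E(\omega)$, peel off the sub-looptree information by independence, and apply Lemma~\ref{lem:Gem correlations} to the residual GEM conditioning. The only cosmetic difference is that the paper first asserts conditional independence of $B_n$ and $U_n$ given $\bigcap_{m<n} A_m^c$ to factorise the conditional probability and then bounds each factor, whereas you bound $\prcondb{B_n}{E(\omega)}{}$ and $\prcondb{U_n}{E(\omega)}{}$ on each subcase and then multiply and sum; these are equivalent and rest on the same independence. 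Your explicit flag about the coupling through $R_n = M_n^{-1/\alpha} r_n$ is a point the paper passes over quickly, and your resolution (that the conditioning is only on $M_l$ for $l<n$, while $\La^n$ is independent of those masses) is exactly what is needed.
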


Armed with the lemma, we prove the global infimum upper bound as follows.

\begin{proof}[Proof of supremal lower bound in Theorem \ref{thm:main global}]
Recall from Theorem \ref{thm:spinal decomp} that the rescaled looptree $\La^n$ is independent of $M_n$. It follows that the event $B_n$ is independent of $\cup_{m=1}^{N_n} A_{n,m}$, and hence
\begin{align}\label{eqn:glob sup LB indep prob}
\prb{A_n} = \prb{B_n} \prb{\cup_{m=1}^{N_n} A_{n,m}}.
\end{align}
We bound each of these terms separately. Firstly, by Lemma \ref{lem:Gem paley zig}, we have that there exists $\tilde{c}_p > 0$ such that
\begin{align*}
\prb{M_{k} \geq \frac{1}{2} k^{-\alpha}} \geq \tilde{c}_p
\end{align*}
for all $k \geq 1$.
Recalling that $r_n = 2^{-n}$, we see that
\begin{align}\label{eqn:PZ bound}
\prb{B_n} = \prb{M_n \geq r_n^{\frac{\alpha}{2}}} \geq \prb{M_n \geq \frac{1}{2}n^{-\alpha}} \geq \tilde{c}_p.
\end{align}
To bound the second term in (\ref{eqn:glob sup LB indep prob}), we apply point $(ii)$ of Lemma \ref{lem:Gem correlations}, which implies that 
\begin{align*}
\prb{\bigcup_{m=1}^{N_n} A_{n,m}} \geq 1 - \prod_{m=1}^{N_n} (1 - \prb{A_{n,m})}.
\end{align*}
Recalling that $N_n = \lfloor 2^{\frac{-(2\alpha +1)}{\alpha}} r_n^{-1} (\log r_n^{-1})^{\frac{-1}{\alpha}} \rfloor \leq 2^{\frac{-(\alpha +1)}{\alpha}} r_n^{-1} (\log r_n^{-1})^{\frac{-1}{\alpha}}$, we again apply (\ref{eqn:PZ bound}) to deduce that 
\begin{align*}
\prcondb{C_{n,m}}{B_n}{} = \prb{C_{n,m}} = \prb{M_{n,m} \geq R_n^{\alpha} (\log r_n^{-1})} \geq \prb{M_{n,m} \geq \frac{1}{2} m^{-\alpha} } > c_p
\end{align*}
whenever $m < N_n$. To conclude, note that conditional on $C_{n,m}$, we have that $M_n^{-1} R^{\alpha}_n(\log r_n^{-1}) \leq 1$ and hence we can apply Proposition \ref{prop:sup LB prob} to deduce that 
\begin{align*}
\prcondb{D_{n,m}}{C_{n,m}, B_n}{} \geq \prb{\nu(B(\rho, M_n^{\frac{-1}{\alpha}}R_n)) \geq M_n^{-1} R_n(\log r_n^{-1})} \geq C e^{-\hat{c} \lambda_n}.
\end{align*}
Here we are specifically taking $\hat{c}$ to be the constant in the exponent of Proposition \ref{prop:sup LB prob}. Combining, we see that
\begin{align*}
\prb{\bigcup_{m=1}^{N_n} A_{n,m}} \geq 1 - \prod_{m=1}^{N_n} (1 - \prb{A_{n,m})} &\geq 1 - (1-C e^{-2\hat{c} \lambda_n'})^{N_n} 
%&\geq 1- \exp\{ N_n C e^{-\hat{c} \lambda_n} \} \\
%&\geq 1- \exp\{ 2^{\frac{-(\alpha +1)}{\alpha}} r_n^{-1} (\log r_n^{-1})^{\frac{-1}{\alpha}} C e^{-2\hat{c} C^* \log r_n^{-1}} \} \\
\geq 1- \exp\{ 2^{\frac{-(\alpha +1)}{\alpha}} r_n^{-1} (\log r_n^{-1})^{\frac{-1}{\alpha}} C r_n^{2\hat{c} C^*} \}
\end{align*}
Hence, by choosing $C^* > (2 \hat{c})^{-1}$, we see that $\prb{\cup_{m=1}^{N_n} A_{n,m}} \rightarrow 1$ as $n \rightarrow \infty$, and in particular that we can lower bound it by a non-negative constant uniformly in $n$. Combining this with (\ref{eqn:glob sup LB indep prob}) and (\ref{eqn:PZ bound}), we see that there exists a constant $c>0$ such that $\prb{A_n} \geq c$ for all $n \geq 1$. It then follows from Lemma \ref{lem:prob indep global sup LB} and Borel-Cantelli that $\prb{A_n \text{ i.o.}} = 1$.

The conclusion follows since on the event $D_{n,m}$, we can rescale the ball $B(\rho_{n,m}, R_n) \cap L_{n,m}$ back to its original size in the original looptree to obtain a ball of radius $r_n$ with volume at least $r_n^{\alpha} 2\lambda'_n$. Moreover, on the event $B_n$ we also have that $\lambda_n \leq 2\lambda_n'$, so this volume is actually lower bounded by $r_n^{\alpha} \lambda_n = r_n^{\alpha} \log r_n^{-1}$.
\end{proof}

\subsection{Infimal Upper Bounds}\label{sctn:inf UBs}

We now prove (\ref{eqn:glob inf UB}) and (\ref{eqn:loc inf UB}). The method we use to prove upper bounds on infimal extrema is a simpler version of that used in Section \ref{sctn:sup vol UBs} based on the Williams' decomposition. We can again control the masses of fragments in the decomposition by comparison with an $\alpha^{-1}$-stable subordinator. In this case however, we do not need to worry about reiterating around larger fragments since the presence of such fragments is a rare event and thus should not affect the infimal behaviour of the subordinator.

Let $H$ be the height of the spine in the corresponding tree $\Ta$. As in Section \ref{sctn:sup vol UBs}, we start by rescaling $\La$ by $H$ to form the looptree $(\La^1, d^1, \rho^1, \nu^1)$, which now has mass $H^{\frac{-\alpha}{\alpha - 1}}$ and has a corresponding underlying stable tree that has height $1$. Note that
\[
\{ \nu (B(\rho, r)) \leq r^{\alpha} \lambda^{-1} \} = \{\nu^1 (B^1(\rho^1, rH^{\frac{-1}{\alpha - 1}})) \leq R^{\alpha} \lambda^{-1}\}.
\]
where again $R = rH^{\frac{-1}{\alpha - 1}}$. As explained in the Lemma \ref{lem:iterative progeny Poisson compare}, and using the notation we introduced there, it follows from properties of the \Ito excursion measure that $\nu^1 (B^1(\rho^1, R))$ is stochastically dominated by $Y(|I_{R}|)$, where $Y$ is an $\alpha^{-1}$-stable subordinator, and $I_{R}$ denotes the length of W-loopspine that intersects $B^1(\rho^1, R)$. A jump of $Y$ of size $\Delta$ at a time $t$ corresponds to a sublooptree coded by an \Ito excursion of lifetime equal to $\Delta$, and grafted to the W-loopspine at a point that informally is at a clockwise distance $t$ ``through" $I_R$. Moreover, since we have rescaled the looptree to have tree height $1$, there is no constraint on its total mass, and therefore no dependence between different jumps of $Y$.

For technical reasons we will in fact model this by two independent $\alpha^{-1}$-stable subordinators, $Y^{(l)}$ and $Y^{(r)}$, corresponding to the left and right sides of the W-loopspine respectively. We set $Y = Y^{(l)} + Y^{(r)}$.

The comparison relies on the following result, which gives the limiting behaviour of the infimum of an $\alpha^{-1}$-stable \Levy subordinator.

\begin{theorem}\cite[Section III.4, Theorem 11]{BertoinLevy}.\label{thm:Bert subord liminf}
Let $(W_t)_{t \geq 0}$ be an $\alpha^{-1}$-stable \Levy subordinator. Then, almost surely,
\[
\liminf_{t \downarrow 0^+} \frac{W_t}{t^{\alpha}(\log \log t^{-1})^{- (\alpha - 1)}} = \alpha^{-1} (1 - \alpha^{-1})^{\alpha - 1}.
\]
\end{theorem}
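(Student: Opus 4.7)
The plan is to prove both matching bounds via Borel–Cantelli arguments based on sharp left-tail estimates for $W_t$ as $t \downarrow 0$, which in turn come from the Laplace transform $\mathbf{E}[e^{-\lambda W_t}] = e^{-t\lambda^{1/\alpha}}$ (valid because $W$ is an $\alpha^{-1}$-stable subordinator). First I would apply Markov's inequality to $e^{-\lambda W_t}$ to get $\mathbf{P}(W_t \leq x) \leq e^{\lambda x - t \lambda^{1/\alpha}}$, and optimise over $\lambda > 0$. A direct computation (the optimiser is $\lambda = (t/(\alpha x))^{\alpha/(\alpha - 1)}$) yields the Chernoff bound
\[
\mathbf{P}(W_t \leq x) \;\leq\; \exp\Bigl\{-\tfrac{\alpha-1}{\alpha}\,\alpha^{-1/(\alpha-1)}\,t^{\alpha/(\alpha-1)}\,x^{-1/(\alpha-1)}\Bigr\},
\]
and a matching lower bound of the same exponential form, up to a polynomial prefactor, is known for one-sided stable subordinators (for instance from the saddle-point asymptotics of the density of $W_t$ at $0$, or from classical small-ball estimates; see e.g.\ Bertoin~\cite{BertoinLevy}).

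Next I plug in $x = c\, t^{\alpha}(\log\log t^{-1})^{-(\alpha-1)}$: the exponent in the Chernoff bound becomes $-K(c)\log\log t^{-1}$ with $K(c) = \tfrac{\alpha-1}{\alpha}(\alpha c)^{-1/(\alpha-1)}$. Solving $K(c) = 1$ gives $c = \alpha^{-1}(1-\alpha^{-1})^{\alpha-1}$, which is the target constant. Hence for any $c$ strictly less than this threshold we have $K(c) > 1$ and $\mathbf{P}(W_t \leq c t^{\alpha}(\log\log t^{-1})^{-(\alpha-1)}) \leq (\log t^{-1})^{-K(c)}$. Summing along a geometric subsequence $t_n = q^n$ ($q \in (0,1)$) gives a convergent series $\sum_n n^{-K(c)}$, so the first Borel–Cantelli lemma forces $W_{t_n} > c t_n^{\alpha}(\log\log t_n^{-1})^{-(\alpha-1)}$ for all large $n$. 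Monotonicity of $W$ and the continuity of the normalising function $t \mapsto t^{\alpha}(\log\log t^{-1})^{-(\alpha-1)}$ between consecutive $t_n$, together with letting $q \uparrow 1$ and $c$ approach the threshold, would yield $\liminf_{t \downarrow 0^+} W_t / (t^{\alpha}(\log\log t^{-1})^{-(\alpha-1)}) \geq \alpha^{-1}(1-\alpha^{-1})^{\alpha-1}$ almost surely.

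For the matching upper bound on the liminf I would exploit the independent-increments property of $W$ along a geometric sequence $t_n = q^n$: the increments $D_n := W_{t_n} - W_{t_{n+1}}$ are independent, and $D_n \overset{(d)}{=} W_{(1-q) t_n}$ by stationarity. For any $c$ above the threshold the matching lower bound on the left tail gives $\mathbf{P}(D_n \leq c t_n^{\alpha}(\log\log t_n^{-1})^{-(\alpha-1)}) \geq (\log t_n^{-1})^{-K'(c)}$ with $K'(c) < 1$ (after absorbing the factor $(1-q)^{\alpha/(\alpha-1)}$ into the constant), so $\sum_n \mathbf{P}(\cdot)$ diverges and the second Borel–Cantelli lemma gives infinitely many such $n$. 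Since $W_{t_n} = D_n + W_{t_{n+1}}$ and $W_{t_{n+1}}$ has typical order $t_{n+1}^{\alpha} = q^{\alpha}t_n^{\alpha}$, taking $q$ small makes this contribution negligible relative to the $(\log\log)^{-(\alpha-1)}$ scale, and then letting $c$ decrease to the threshold concludes. The main obstacle is securing the sharp small-ball \emph{lower} estimate for $\mathbf{P}(W_t \leq x)$ with the correct exponential constant — the Laplace-transform / Markov argument gives the upper bound for free, but the matching lower bound requires either a saddle-point analysis of the explicit stable density or an invocation of the classical small-ball results cited in \cite{BertoinLevy}.
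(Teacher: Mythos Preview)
The paper does not give its own proof of this statement: it is quoted verbatim from Bertoin's monograph (the theorem carries the citation \cite[Section III.4, Theorem 11]{BertoinLevy} and is used as a black box in Section~\ref{sctn:inf UBs}). So there is no ``paper's proof'' to compare against. That said, your sketch is the standard Fristedt-type argument and your computation of the threshold constant via the Laplace transform is correct: the Chernoff exponent is exactly $-K(c)\log\log t^{-1}$ with $K(c_*)=1$ at $c_*=\alpha^{-1}(1-\alpha^{-1})^{\alpha-1}$, and the $\liminf\geq c_*$ half goes through as you describe (geometric $t_n=q^n$, first Borel--Cantelli, monotonicity, $q\uparrow 1$).

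There is, however, a genuine gap in your $\liminf\leq c_*$ half. With $t_n=q^n$ the remainder $W_{t_{n+1}}$ is \emph{not} negligible relative to $\phi(t_n):=t_n^{\alpha}(\log\log t_n^{-1})^{-(\alpha-1)}$: by self-similarity $W_{t_{n+1}}\overset{d}{=}t_{n+1}^{\alpha}W_1$, so its typical size is $q^{\alpha}t_n^{\alpha}$, and
\[
\frac{q^{\alpha}t_n^{\alpha}}{\phi(t_n)}=q^{\alpha}\bigl(\log\log t_n^{-1}\bigr)^{\alpha-1}\longrightarrow\infty
\]
no matter how small $q$ is. So even when $D_n$ is small, $W_{t_n}=D_n+W_{t_{n+1}}$ need not be. The standard repair is to take a faster sequence, e.g.\ $t_n=\exp(-n^{p})$ for $p>1$ close to $1$: then $t_{n+1}/t_n\to 0$ fast enough that a crude Borel--Cantelli bound on the upper tail of $W_1$ forces $W_{t_{n+1}}/\phi(t_n)\to 0$ a.s., while $\log\log t_n^{-1}\sim p\log n$ still makes $\sum_n \mathbf{P}(D_n\leq c\,\phi(t_n))\asymp\sum_n n^{-pK(c)}$ diverge for any $c>c_*$ once $p$ is close enough to $1$. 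Letting $p\downarrow 1$ then recovers the sharp constant. Your acknowledgement that the matching small-ball \emph{lower} estimate on $\mathbf{P}(W_t\leq x)$ requires an independent input (de Bruijn--type Tauberian theorem or direct density asymptotics) is appropriate; that is indeed the non-elementary ingredient.
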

To deduce a similar result for $(Y_t)_{t \geq 0}$ in place of $(W_t)_{t \geq 0}$, note that the only difference between the two subordinators is the constant in the \Levy measure. Hence we have the same result for $(Y_t)_{t \geq 0}$, but just with a different constant on the right hand side. We will denote this constant by $c_{\alpha}$.

\begin{proof}[Proof of local infimal upper bound in Theorem \ref{thm:main local}]
Set $f(t) = t^{\alpha}(\log \log t^{-1})^{- (\alpha - 1)}$ for $t > 0$. By Theorem \ref{thm:Bert subord liminf}, there almost surely exists a sequence $(r_n)_{n \geq 1}$ with $r_n \downarrow 0$ such that 
\[
Y(3r_nH^{\frac{-1}{\alpha - 1}}) \leq (c_{\alpha} + 1)f(3r_n H^{\frac{-1}{\alpha - 1}})
\]
for all $n$. Since $f(3r_n H^{\frac{-1}{\alpha - 1}}) \leq 2 \cdot 3^{\alpha} r_n^{\alpha} H^{\frac{-\alpha}{\alpha - 1}} (\log \log r_n^{-1})^{-(\alpha - 1)}$ whenever $r_n \leq H^{\frac{-1}{\alpha - 1}}$, we can extract a subsequence if necessary so that 
\[
Y(3r_nH^{\frac{-1}{\alpha - 1}}) \leq 2 \cdot 3^{\alpha} (c_{\alpha} + 1)r_n^{\alpha} H^{\frac{-\alpha}{\alpha - 1}} (\log \log r_n^{-1})^{-(\alpha - 1)}
\]
and also $r_{n+1} < \frac{1}{2}r_n$ for all $n \geq 1$. Set $R_n = r_n H^{\frac{-1}{\alpha - 1}}$.

Note that since the process $Y$ depends only on the total length of the W-loopspine, and not on its microscopic structure, it follows from Lemma \ref{lem:segment length bound} that there exists a constant $C_p > 0$ such that
\begin{equation*}\label{eqn:I rnh bound}
\prb{|I_{R_n}| \leq 3R_n} \geq C_p
\end{equation*}

for all $n$. More specifically, we let $A_n$ be the event described by taking $\lambda = 1$ in the proof of Lemma \ref{lem:segment length bound} that ensures that $|I_{R_n}| \leq 3R_n$, consisting of the three subevents: \\
${(i)}_n$ There exists a good loop in the W-loopspine with total length in $[4R_n, 8R_n]$. \\
${(ii)}_n$ There are no goodish loops in the W-loopspine occurring between the root and the first good one. \\
${(iii)}_n$ The sum of the lengths of the smaller loops up until the first good loop is upper bounded by $R_n$.

The proof of Lemma \ref{lem:segment length bound} ensures that $\prb{A_n} \geq C_p$ for all $n$, but to apply the second Borel-Cantelli Lemma we need to lower bound $\prcondb{A_n}{A_m^c \forall m < n}{}$ instead. To do this, note that conditional on $A_m^c \forall m < n$:
\begin{itemize}
\item The probability of the event described in $(i)_n$ is unaffected by the events of $A_m$ for $m < n$, since the sets $[4R_n, 8R_n]$ are disjoint for different $n$ and therefore can be viewed as independent thinned Poisson processes along the W-spine of the tree.
\item Conditional on $(i)^c_{m}$ occurring for all $m<n$, the probability that there is only one goodish loop before the first good one at level $n-1$ is lower bounded by $\prcond{\textsf{Geo}(\frac{1}{2}) = 1}{ \textsf{Geo}(\frac{1}{2}) \neq 0}{} = \frac{1}{2}$.
\item Conditional on there only being one such goodish loop at level $n-1$, the probability that the good loop at level $n$ occurs before the goodish loop at level $n-1$ is at least $\frac{1}{2}$. If this occurs, then the probability of the events in $(ii)_n$ and $(iii)_n$ is unaffected.
\end{itemize}
It follows that 
\[
\prcondb{A_n}{A_m^c \forall m < n}{} \geq \frac{1}{4}C_p
\]
for all $n$, and therefore $\prb{A_n \text{ i.o.}}=1$ by the second Borel-Cantelli Lemma.

To conclude, note that on the event $A_n$ we have
\begin{align*}
\nu^1 (B^1(\rho^1, R_n)) \leq Y(3R_n) \leq 2 \cdot 3^{\alpha} (c_{\alpha} + 1)R_n^{\alpha} (\log \log r_n^{-1})^{-(\alpha - 1)},
\end{align*}
and hence scaling back to the original looptree we see that
\[
\nu (B(\rho, r_n)) \leq 3^{\alpha}(c_{\alpha} + 1)r_n^{\alpha} (\log \log r_n^{-1})^{-(\alpha - 1)}.
\]
for all sufficiently large $n$. This proves the local result (\ref{eqn:loc inf UB}).
\end{proof}
%
%\begin{figure}[h]
%\centering
%\includegraphics[width=14cm]{decomp2.png}
%\caption{Illustration of the Double Decomposition}\label{fig:doubledecomp11}
%\end{figure}

To prove the global bound, we perform two subsequent spinal decompositions of $\La$, exactly as illustrated in Figure \ref{fig:doubledecomp1} in the previous section. Recall from there that we let $(M_1, M_2, \ldots)$ denote the GEM masses obtained on performing a first spinal decomposition of $\La$, as described in Section \ref{sctn:spinal decomp}. Then, for each of the resulting fragments $(L_1, L_2, \ldots)$, rescale to obtain a sequence of independent stable looptrees $(\La^1, \La^2, \ldots)$, each with mass $1$. For each $n \in \N$, we perform a further spinal decomposition of $\La^n$ and denote the resulting GEM masses by $\{M_{n,1}, M_{n,2}, \ldots \}$, and corresponding looptrees by $\{L_{n,1}, L_{n,2}, \ldots \}$, and by $\{\L_{\alpha}^{n,1}, \La^{n,2}, \ldots \}$ after rescaling. We also let $U_{n,m}$ denote a point chosen uniformly in $L_{n,m}$ according to the natural volume measure. We take $r_n = 2^{-n}, R_n = M_n^{\frac{-1}{\alpha}}r_n$, $\lambda_n = (C^*\log r_n^{-1})^{\alpha - 1}$ and $\lambda_n' = (C^*\log R_n^{-1})^{\alpha - 1}$, where $C^*$ is a constant to be specified later. We also define the events:
\begin{align*}
B_n &= \{r_n^{\alpha} \leq M_n^{2} \}, &&C_{n,m} = \{ d_{\La^{n}}(\rho_{m,n}, U_{m,n}) \geq R_n \}, \\
D_{n,m} &= \{ \nu_{\La^{n}} (B(U_{n,m}, R_n) \cap L_{n,m}) \leq R_n^{\alpha} {\lambda_n}^{-1} \}, &&A_{n,m} = C_{n,m} \cap D_{n,m}.
\end{align*}
We also set $N_n = r_n^{\frac{-1}{2}}$. We then define the event
\[
A_n = B_n \cap \Big( \bigcup_{m=1}^{N_n} A_{n,m} \Big).
\]
The key point is to observe that $A_n \subset \{ \sup_{u \in \La} \nu (B(u,r_n)) \leq r_n^{\alpha} {\lambda_n}^{-1} \}$, and hence it is sufficient to only show that $\prb{A_n \text{ i.o.}}=1$. Similarly to the previous section, the next lemma gives us a means to overcome the dependencies between the GEM masses and apply the second Borel-Cantelli Lemma. Its proof is almost identical to that of Lemma \ref{lem:prob indep global sup LB}, so is omitted.

\begin{lem}\label{lem:prob indep global inf UB}
Let $A_n, B_n, A_{n,m}, C_{n,m}, D_{n,m}, N_n$ be as above. Then 
\begin{enumerate}[(i)]
\item $ \prcondb{A_n}{A_m^c \forall m < n}{} \geq \prb{A_n}$,
\item $\prcondb{A_{n,m}}{A_{n,l}^c \forall l < m}{} \geq \prb{A_{n,m}}$.
\end{enumerate}
\end{lem}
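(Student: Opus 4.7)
The plan is to mirror the proof of Lemma \ref{lem:prob indep global sup LB} with essentially the substitution of the new definitions of $C_{n,m}$ and $D_{n,m}$. The argument there did not rely on the specific form of these events — only on their measurability with respect to certain independent objects — so the same reasoning should carry over with only bookkeeping changes.

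First I would record the three key independence observations derived from Theorem \ref{thm:spinal decomp}. Since the rescaled looptrees $\La^n$ are mutually independent and each is independent of its original mass $M_n$: (a) for $l<n$, the event $A_{n,m}$ is independent of $B_l$ and of every $A_{l,k}$, because $A_{n,m}$ is a measurable function of $(M_n, \La^n)$ while $B_l, A_{l,k}$ are functions of $(M_l, \La^l)$; (b) $B_n$, being a measurable function of $M_n$ alone, is independent of every $A_{l,k}$ with $l<n$; (c) conditional on the second-level GEM weights $(M_{n,m})_{m \geq 1}$ and on $M_n$, the events $D_{n,m}$ and hence $A_{n,m}$ are mutually independent over $m$, since they are functions of the independent rescaled sub-looptrees $\La^{n,m}$ together with intrinsic uniform points, pushed forward by the common scaling factor $R_n = M_n^{-1/\alpha} r_n$.

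For part (i), I would decompose
\[
\prcondb{A_n}{A_m^c \ \forall m<n}{} = \prcondb{B_n}{A_m^c \ \forall m<n}{}\,\prcondb{\cup_{m=1}^{N_n} A_{n,m}}{A_m^c \ \forall m<n}{}
\]
using observation (a), exactly as in the previous lemma. To bound the first factor I would expand the conditioning event as a disjoint union over binary vectors $\omega \in \{0,1\}^{n-1}$, where $\omega_l$ records whether $A_l^c$ occurred via $B_l^c$ or via $(\cup_k A_{l,k})^c$. By (b) the only residual dependence between $B_n$ and the atoms $E(\omega)$ is through the first-level Poisson--Dirichlet correlation of the GEM masses $(M_l)$, so Lemma \ref{lem:Gem correlations} applies to give $\prcondb{B_n}{E'(\omega)}{} \geq \prb{B_n}$, from which the desired inequality on the first factor follows after re-summing in $\omega$. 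The second factor is bounded in exactly the same way, using (a) in place of (b).

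For part (ii), within a single level $n$ the events $A_{n,m} = C_{n,m} \cap D_{n,m}$ depend only on $M_n$, on the second-level GEM weight $M_{n,m}$, on the rescaled sub-looptree $\La^{n,m}$, and on the uniform point chosen in it. By (c), conditional on $(M_{n,m})_{m \geq 1}$ the $D_{n,m}$ are independent across $m$, so the only cross-$m$ dependence in the events $A_{n,l}^c$ for $l<m$ comes from the PD-correlation among $(M_{n,m})_{m \geq 1}$. Expanding the conditioning event as a union of atoms as above and applying Lemma \ref{lem:Gem correlations} to these residual GEM correlations yields $\prcondb{A_{n,m}}{A_{n,l}^c \ \forall l<m}{} \geq \prb{A_{n,m}}$. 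The main obstacle here is purely bookkeeping: tracking which events depend on which GEM weights and applying Lemma \ref{lem:Gem correlations} at the correct step. Once observations (a)--(c) are in hand, the argument is structurally identical to that of Lemma \ref{lem:prob indep global sup LB} and proceeds mechanically.
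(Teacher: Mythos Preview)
Your proposal is correct and takes essentially the same approach as the paper: the paper itself omits the proof of this lemma, stating only that it is ``almost identical to that of Lemma~\ref{lem:prob indep global sup LB}'', and your plan is precisely to rerun that argument with the new definitions of $C_{n,m}$ and $D_{n,m}$, using the same independence observations from Theorem~\ref{thm:spinal decomp} and the same $\omega$-expansion combined with Lemma~\ref{lem:Gem correlations}.
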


\begin{proof}[Proof of global infimal upper bound in Theorem \ref{thm:main global}.]
Now, note that it follows from \cite[Section III.4, Theorem 12]{BertoinLevy} and the local argument given above that
\begin{equation}\label{eqn:inf UB prob}
\prb{\nu (B(p(U), r) \leq r^{\alpha} \lambda^{-1}} \geq Ce^{-c\lambda^{\frac{1}{\alpha - 1}}}.
\end{equation}
We will apply this to prove that $\prb{A_n} \geq Ce^{-c\lambda^{\frac{1}{\alpha - 1}}}$ as well. Firstly, note that by Lemma \ref{lem:Gem paley zig} there exists a constant $c>0$ such that $\prb{B_n} > c$ for all $n$. Then, since the looptrees in the spinal decomposition are independent of their original masses after rescaling (see Theorem \ref{thm:spinal decomp}), it follows that $\bigcup_{m=1}^{N_n} A_{n,m}$ is independent of $B_n$. Next, we note that:
\begin{align*}
\prcondb{C_{n,m}}{B_n, m \leq r_n^{\frac{-1}{2}}}{} &= \prcondb{d_{\La^{n}}(\rho_{m,n}, U_{m,n}) \geq R_n}{B_n, m \leq r_n^{\frac{-1}{2}}}{} \\
&\geq \prcondb{d_{\La^{n}}(\rho_{m,n}, U_{m,n}) \geq r_n^{\frac{1}{2}}}{m \leq r_n^{\frac{-1}{2}}}{} \\
%&\geq \prcondb{d_{\La^{n}}(\rho_{m,n}, U_{m,n}) \geq r_n^{\frac{1}{2}}}{m \leq r_n^{\frac{-1}{2}}}{} \prcondb{\nu_{\La^n}(L_{n,m}) \geq \frac{1}{2} r_n^{\frac{\alpha}{2}}}{m \leq r_n^{\frac{-1}{2}}}{} \\
&\geq \prcondb{\nu_{\La}(L_{m}) \geq \frac{1}{2}r_n^{\frac{\alpha}{2}}}{m = r_n^{\frac{-1}{2}}}{} \prcondb{d_{\La}(\rho_{m}, U_{m}) \geq r_n^{\frac{1}{2}}}{\nu_{\La}(L_{m}) = \frac{1}{2}r_n^{\frac{\alpha}{2}}}{} \\
&\geq C,
\end{align*}
where $C>0$. The final line follows since by Lemma \ref{lem:Gem paley zig} the first term in the penultimate line above can be uniformly lower bounded by a constant, and the second term can also be uniformly lower bounded by a constant by scaling invariance.

To conclude, we note from (\ref{eqn:inf UB prob}) that $\prcondb{D_{n,m}}{C_{n,m}, B_n}{} \geq Ce^{-c{\lambda_n}^{\frac{1}{\alpha - 1}}}$ for all $n$, and all $m \leq N_n$. Combining these, we see that $\prb{A_{m,n}} \geq Ce^{-c{\lambda_n}^{\frac{1}{\alpha - 1}}}$. We therefore deduce from Lemma \ref{lem:prob indep global inf UB}(ii) that 
\begin{align*}
\prb{A_n} \geq \prb{B_n} \Big( 1 - \big( 1 - \prcondb{A_{n,m}}{B_n}{}\big)^{N_n}\Big) &\geq C' \Big( 1 - \big( 1 - Ce^{-c{\lambda_n}^{\frac{1}{\alpha - 1}}}\big)^{N_n} \Big) \\
&\geq C' \Big( 1 - \exp \{- N_n Ce^{-c{\lambda_n}^{\frac{1}{\alpha - 1}}} \} \Big) \\
&\geq C' \Big( 1 - \exp \{- r_n^{\frac{-1}{2}} Ce^{-cC^* \log r_n^{-1}} \} \Big).
\end{align*}
Choosing $C^*$ so that $C^* < \frac{1}{4} c^{-1}$, we obtain that
\begin{align*}
\prb{A_n} &\geq C' \Big( 1 - \exp \{- r_n^{\frac{-1}{4}} C \} \Big) \geq \frac{1}{2}C'
\end{align*}
for all sufficiently large $n$. Applying Lemma \ref{lem:prob indep global inf UB}(i) and the second Borel-Cantelli Lemma, we deduce that $\prb{A_n \text{ i.o.}}=1$, which implies (\ref{eqn:glob inf UB}).
\end{proof}

\subsection{Volume Convergence Results}\label{sctn:vol conv}
Here we briefly note a convergence result for $\nu (B(\rho, r))$. In a companion paper \cite{ArchInfiniteLooptrees}, we introduce the infinite stable looptree $\Lai$, which is defined from two stable \Levy processes rather than a \Levy excursion, arises as the local distributional limit of compact stable looptrees as their mass goes to infinity \cite[Theorem 1.1]{ArchInfiniteLooptrees}, and provides the machinery to prove the following result. For details see \cite[Section 6.3]{ArchInfiniteLooptrees}.

\begin{theorem}\cite[Theorem 6.6]{ArchInfiniteLooptrees}.\label{thm:main vol conv}
There exists a random process $(V_t)_{t \geq 0}: \Omega \rightarrow D([0, \infty), [0, \infty))$ such that the finite dimensional distributions of the process
\begin{align*}
\big( r^{-\alpha} \nu(\bar{B}(\rho, rt)) \big)_{t \geq 0}
\end{align*}
converge to those of $\big( V_t \big)_{t \geq 0}$ as $r \downarrow 0$, and $V_t$ denotes the volume of a closed ball of radius $t$ around the root in $\Lai$. Moreover, for any $p \in [1,\infty)$, we have that
\[
r^{-\alpha p} \Eb{\nu(\bar{B}(\rho, r))^p} {\rightarrow} \Eb{V_1^p}
\]
as $r \downarrow 0$, and $V_1$ is a $(0, \infty)$-valued random variable with all moments finite.
\end{theorem}

\section{Heat Kernel Estimates}\label{sctn:HK estimates compact looptrees}
Although we used the shortest distance metric to prove the volume results of Theorems \ref{thm:main local} and \ref{thm:main global}, the result of Lemma \ref{lem:dR compare} ensures that they also hold true with respect to the resistance metric $R$. This allows us to apply results of \cite{CroyHKFluctNonUn} to deduce the heat kernel bounds of Theorems \ref{thm:main HK global} and \ref{thm:main HK local}. Most of our results follow from a direct application of those of \cite{CroyHKFluctNonUn}, so we refer the reader there for further background.

To get some off-diagonal results, we need to verify the Chaining Condition (CC) of \cite[Section 4.2]{CroyHKFluctNonUn}.

\begin{defn}(Chaining Condition (CC), \cite[Section 4.2]{CroyHKFluctNonUn}).
A metric space $(X,R)$ is said to satisfy the chaining condition if there exists a constant $c$ such that for all $x, y \in X$ and all $n \in \N$, there exists $\{x_0, x_1, \ldots, x_n \} \subset X$ with $x_0 = x$ and $x_n = y$ such that
\[
R(x_i, x_{i+1}) \leq c \frac{R(x,y)}{n}.
\]
\end{defn}

It is easy to verify that CC holds for $(\La, R, \rho, \nu)$. Recall from \cite[Corollary 4.4]{RSLTCurKort} that $\La$ is almost surely a length space when endowed with the shortest distance metric $d$. The chaining condition  for $(\La, d, \rho, \nu)$ therefore holds as a straightforward extension of the midpoint condition for length spaces, with $c = 1 + \epsilon$ for any $\epsilon > 0$ (though it actually holds with $c=1$). It hence follows from Corollary \ref{lem:dR compare} that $\La$ endowed with the resistance metric $R$ also satisfies the condition, with $c=2(1+ \epsilon)$ (in fact $c=2$ works) instead.

In the notation of \cite{CroyHKFluctNonUn}, we can take any $\epsilon > 0$ to satisfy point $(i)$ of the conditions given in Section 2 of that paper, and take $b=\epsilon$ to satisfy point $(iii)$. We also let $f_l(r) = C\lr^{-\alpha}$, $f_u(r) = C\lr^{\frac{4\alpha-3}{\alpha -1}}$, and $\beta_l = \beta_u = \alpha$, and $\theta_1 = (3+2\alpha)(2+\alpha)$. The first part of Theorem \ref{thm:main HK global} then follows by a direct application of \cite[Theorem 1]{CroyHKFluctNonUn}, with $\gamma_1 = \theta_1 (\alpha + \frac{4\alpha - 3}{\alpha - 1})$.

We can similarly apply the results to get off diagonal heat kernel bounds. Again in the notation of \cite{CroyHKFluctNonUn}, take $\theta_2$ and $\theta_3$ satisfying
\begin{align*}
\theta_2 &> \theta_1 (1+ \alpha), \hspace{10mm} \theta_3 > (3 + 2\alpha)(1 + 2 \alpha^{-1}),
\end{align*}
and let $\gamma_i = \theta_i (\alpha + \frac{4\alpha - 3}{\alpha - 1})$ for $i=2,3$. Theorem \ref{thm:main HK off diag} then follows by a direct application of \cite[Theorem 3]{CroyHKFluctNonUn}.

The results of \cite[Proposition 11]{CroyHKFluctNonUn} can also be applied to give bounds on expected exit times from a ball of radius $r$. Indeed, letting $\tau_A = \inf \{t \geq 0: B_t \notin A \}$ for any $A \subset \La$, we deduce the following.

\begin{prop}
\begin{align*}
\estartb{\tau_{B(x, r)}}{x} &\geq c r^{\alpha + 1} \lr^{-2(\alpha + \frac{4\alpha - 3}{\alpha - 1})(\alpha + 1)}(\log (r^{-1} (\log r^{-1})^{2(\alpha + \frac{4\alpha - 3}{\alpha - 1})}))^{-\alpha} \\
\estartb{\tau_{B(x, r)}}{x} &\leq C r^{\alpha + 1} \lr^{\frac{4\alpha - 3}{\alpha - 1}}.
\end{align*}
\end{prop}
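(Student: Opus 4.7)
The plan is to obtain both bounds as a direct application of \cite[Proposition 11]{CroyHKFluctNonUn}, feeding in the uniform volume bounds established in Theorem \ref{thm:main global}. Since we have already verified the chaining condition (CC) for $(\La, R)$ using Lemma \ref{lem:dR compare} and the fact from \cite[Corollary 4.4]{RSLTCurKort} that $(\La, d)$ is a length space, and since the other hypotheses of \cite{CroyHKFluctNonUn} are checked in the volume functions $f_l(r) = c r^{\alpha}\lr^{-\alpha}$ and $f_u(r) = Cr^{\alpha}\lr^{\frac{4\alpha-3}{\alpha - 1}}$ exactly as in the deduction of Theorem \ref{thm:main HK global}, the abstract machinery is directly applicable.

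For the upper bound, I would first note the classical resistance-form identity $\estartb{\tau_{B(x,r)}}{x} = \int_{B(x,r)} g^{B(x,r)}(x,y) \nu(dy)$, where $g^{B(x,r)}$ denotes the Green's kernel of $B$ killed on exiting $B(x,r)$. Since $g^{B(x,r)}(x,y) \leq R(x,y) \leq r$ for all $y \in B(x,r)$, this yields
\[
\estartb{\tau_{B(x,r)}}{x} \leq r \cdot \nu(B(x,r)) \leq r \cdot \sup_{u \in \La} \nu(B(u,r)).
\]
Inserting the supremum volume bound from Theorem \ref{thm:main global} gives the upper estimate $C r^{\alpha + 1} \lr^{\frac{4\alpha - 3}{\alpha - 1}}$ immediately.

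For the lower bound the situation is more delicate, and here I would invoke the full strength of \cite[Proposition 11]{CroyHKFluctNonUn}. That result provides a quenched lower bound of the form $\estartb{\tau_{B(x,r)}}{x} \geq c r f_l(\tilde{r})$, where $\tilde{r}$ is obtained from $r$ by dividing out a correction factor that depends on the ratio $f_u/f_l$ raised to a power controlled by the parameter $\theta_1$ appearing in the hypotheses; this correction accounts for the fact that without pointwise volume control, one must use chaining through a resistance ball of somewhat smaller radius where mass can still be detected. Specifically, with $f_l/f_u \sim \lr^{-(\alpha + \frac{4\alpha-3}{\alpha - 1})}$ and with the exponent $(\alpha + 1)$ coming from the scaling between resistance and time in the resistance-form framework ($\beta = \alpha + 1$), one sees the factor $\lr^{-2(\alpha + \frac{4\alpha - 3}{\alpha - 1})(\alpha+1)}$ emerge from iterating the log correction through the chain; the final $(\log(r^{-1}\lr^{2(\alpha+\frac{4\alpha-3}{\alpha - 1})}))^{-\alpha}$ factor then comes from evaluating $f_l$ at the shifted argument $\tilde{r}$.

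The main routine point to verify is that the assumptions of \cite[Proposition 11]{CroyHKFluctNonUn} hold under our choice of $(f_l, f_u, \beta_l, \beta_u, \theta_1)$, but this was already done in the preceding paragraphs of Section \ref{sctn:HK estimates compact looptrees} in order to deduce Theorem \ref{thm:main HK global}, so no further work is needed. The conceptually important step is recognising that since we only have uniform (and not pointwise) volume control, the lower bound on the exit time unavoidably picks up the polylogarithmic correction factors above, whereas the upper bound is clean because it uses only the crude bound $g^{B(x,r)}(x,\cdot) \leq r$ together with the uniform volume upper bound.
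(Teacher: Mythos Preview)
Your approach is essentially identical to the paper's: both simply invoke \cite[Proposition 11]{CroyHKFluctNonUn} with the volume functions $f_l, f_u$ already set up for Theorem \ref{thm:main HK global}. The paper gives no further detail beyond the citation, so your additional exposition of where the logarithmic factors come from is a bonus rather than a deviation.

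One small technical slip: the inequality $g^{B(x,r)}(x,y) \leq R(x,y)$ is not correct in general (take $y$ very close to $x$ but far from $B(x,r)^c$). The bound you want is $g^{B(x,r)}(x,y) \leq g^{B(x,r)}(x,x) = R(x, B(x,r)^c) \leq r$, which gives the same conclusion. This does not affect the validity of your argument, only the justification of that one line.
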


The results of the propositions above all follow from the fact that the global volume fluctuations are at most logarithmic. We can also use the fact that these logarithmic fluctuations are indeed attained infinitely often as $r \downarrow 0$ to deduce that the heat kernel will indeed experience similar fluctuations.

The volume results as stated in Theorem \ref{thm:main local} do not quite fall into the framework of \cite[Theorem 2]{CroyHKFluctNonUn}, since we have only shown that the infimal and supremal volumes achieve extremal logarithmic fluctuations values infinitely often as $r \downarrow 0$, rather than eventually, which is what is required to apply the theorem. However, by repeating the proof given there with our weaker volume assumptions instead we are able to deduce the (weaker) results that make up the second part of Theorem \ref{thm:main HK global}.

Again using \cite{CroyHKFluctNonUn}, the local volume fluctuation results of Theorem \ref{thm:main local} can also be used to bound pointwise fluctuations for the transition density $p_t(x,x)$. However, the conclusions of \cite[Theorem 20]{CroyHKFluctNonUn} also require the condition 
\[
\liminf_{r \downarrow 0} \frac{R (x, B(x,r)^c)}{r} > 0
\]
to hold for $\nu$-almost every $x \in \La$ in order to get lower bounds on the heat kernel. This does not quite hold in our case but from the proof of \cite{CroyHKFluctNonUn}, we see that the following proposition is sufficient. For clarity in the next proof, we let $B_R (x, r)$ (respectively $B_d(x,r)$) denote the open ball of radius $r$ at $x$ defined with respect to the resistance (respectively geodesic) metric.

\begin{prop}\label{prop:HK subsqnce}
Almost surely, taking $c_{\alpha}$ as in Section \ref{sctn:inf UBs}, we have that for $\nu$-almost every $x \in \La$, there exists a sequence $r_n \downarrow 0$ such that both of the following conditions hold:
\begin{enumerate}[(i)]
\item $\nu (B_R(x, r_n)) \leq 2(c_{\alpha} + 1) r_n^{\alpha} (\log \log r_n^{-1})^{-(\alpha - 1)}$ for all $n$,
\item $R_{\text{eff}}(x, B_R(x, r_n)^c) \geq \frac{1}{64} r_n$.
\end{enumerate}
\begin{proof}
The proof uses a standard technique for lower bounding the effective resistance as given in \cite[Lemma 4.5]{BarKumRWIICTrees}, by defining $M(\rho, r)$ to be the smallest number $m$ such that there exists a set $A_r = \{z_1, z_2, \ldots, z_m \}$ such that $d(\rho, z_i) \in [\frac{r}{4}, \frac{3r}{4}]$ for each $i$, and every path $\gamma$ from $\rho$ to $B_d(\rho, r)^c$ must pass through at least one of the points in $A$. The proof of \cite[Lemma 4.5]{BarKumRWIICTrees} combined with Lemma \ref{lem:dR compare} then entails that
\begin{equation}\label{eqn:BarKum M}
R_{\text{eff}}(\rho, B_R(\rho, r)^c) \geq \frac{r}{16M(\rho, r)}.
\end{equation}
The result exactly as stated in \cite{BarKumRWIICTrees} is written for discrete trees. However, by combining with Lemma \ref{lem:dR compare}, the same proof shows that (\ref{eqn:BarKum M}) holds for $\La$, just with an extra factor of 2.

In what follows, we will therefore assume that all distances are defined with respect to the shortest-distance metric $d$. As in earlier sections, we will prove the result at a uniform point $p(U)$, which we can suppose to be the root, and extend to $\nu$-almost every $x \in \La$ by Fubini's theorem. As in Sections \ref{sctn:sup vol UBs} and \ref{sctn:inf UBs}, let $\lambda_r = 2(c_{\alpha} + 1)(\log \log r^{-1})^{\alpha - 1}$, choose $H$ to be the height of the stable tree associated with $\La$, and rescale time by $H^{\frac{-\alpha}{\alpha - 1}}$ and space by $H^{\frac{-1}{\alpha - 1}}$ in the \Levy excursion coding $\La$ to give a new looptree $\La^1$ such that the new underlying tree associated to $\La^1$ has height $1$. From the arguments of Section \ref{sctn:inf UBs}, it follows that almost surely, there exists a sequence $(r_n)_{n \in \N}$ with $r_n \downarrow 0$ such that $|I_{\frac{1}{4}r_nH^{\frac{-1}{\alpha - 1}}}| \leq \frac{3}{4}r_n H^{\frac{-1}{\alpha - 1}}$, and all sublooptrees grafted to the W-loopspine at a point in $I_{\frac{3}{4}R}$ have mass at most $r_n^{\alpha} \lambda_{r_n}^{-1}$ for all $n$. We will show that, with high probability, we also have $R(x, B_d(x, r_n)^c) \geq c r_n$ for each $n \in \N$.

Now let $r = r_n$ for some $n \in \N$, and $R = rH^{\frac{-1}{\alpha - 1}}$. By construction, we then have:
\begin{itemize}
%\item $|I_{R}| \leq 3R$,
\item $|I_{\frac{1}{4}R}| \leq \frac{3}{4}R$,
%\item $\nu_{\La^1} (B^1 (\rho^1, R)) \leq R^{\alpha} \lambda_r^{-1}$.
\item Any sublooptrees grafted to the W-loopspine at a point in $I_{\frac{3}{4}R}$ have mass at most $r^{\alpha} \lambda_{r}^{-1}$.
\end{itemize}
To bound $M(\rho, r)$, first let $N_r$ denote the number of sublooptrees grafted to the W-loopspine of $\La^1$ at a point in $I_{\frac{3}{4}R}$ and with diameter at least $\frac{3}{4} R$. It follows by construction that any such sublooptrees also have mass at most $R^{\alpha} \lambda_r^{-1}$. Consequently, $N_r$ is stochastically dominated by a Poisson random variable with parameter:
\[
|I_{\frac{1}{4}R}| N\Big(\diam (\tilde{\La}) \geq \frac{3}{4} R, \zeta \leq R^{\alpha} \lambda_r^{-1}\Big),
\]
where $N(\cdot)$ here denotes the \Ito excursion measure, and $\tilde{\La}$ is a looptree coded by an (unconditioned) excursion under $N$. The point is that the two events $\diam (\tilde{\La}) \geq \frac{3}{4} R$ and $\zeta \leq R^{\alpha} \lambda_r^{-1}$ are in conflict with each other and hence the \Ito measure of the given set is small. Indeed, since $N(\cdot)$ codes a Poisson point process, we have the necessary independence from the Poisson thinning property so that:
\begin{align*}
N\Big(\diam (\tilde{\La}) \geq \frac{3}{4} R, \zeta \leq R^{\alpha} \lambda_r^{-1}\Big) \leq N\Big(\diam (\tilde{\La}) \geq \frac{3}{4} R\Big) \prcondb{\nu (B(\rho', R)) \leq R^{\alpha} \lambda_r^{-1}}{\diam(\tilde{\La}) \geq \frac{3}{4}R}{}.
\end{align*}
To bound each of these terms, note first by the scaling property of looptrees and the \Ito measure that
\[
N\Big(\diam (\tilde{\La}) \geq t\Big) = \hat{C}_{\alpha} t^{-1}
\]
for some constant $\hat{C}_{\alpha} \in (0, \infty)$, and hence $N\Big(\diam (\tilde{\La}) \geq \frac{3}{4} R\Big) = \hat{C}_{\alpha} R^{-1}$. Then, by the same arguments used to prove Proposition \ref{prop:inf LB vol loc prob}, we can bound the second term by $Ce^{-c\lambda_r^{\frac{1}{\alpha}}}$, and therefore obtain that
\begin{align*}
&N\Big(\diam (\tilde{\La}) \geq \frac{3}{4} R, \zeta \leq R^{\alpha} \lambda_r^{-1}\Big) \leq \hat{C}_{\alpha} R^{-1} Ce^{-c\lambda_r^{\frac{1}{\alpha}}}.
\end{align*}
It hence follows that $N_r$ is stochastically dominated by a \textsf{Poisson}($C'e^{-c\lambda_r^{\frac{1}{\alpha}}}$) random variable, so
\[
\prb{N_r > 0} \leq C'e^{-c\lambda_r^{\frac{1}{\alpha}}}.
\]
By restricting to a subsequence $(r_{n_l})_{l \geq 1}$ such that $r_{n_l} \leq e^{-e^l}$ for all $l$, we see by Borel-Cantelli that $\prb{N_{r_{l}} > 0 \text{ i.o.}}=0$.

On the event $N_r = 0$, it follows that any path $\gamma$ from $\rho$ to $B_d(\rho, R)$ must leave the ball $B_d(\rho, \frac{1}{4} R)$ at a point on the W-loopspine. We conclude the argument by showing that we can then take a set $A_r$ (which we denote by $A_R$ in the rescaled looptree) with cardinality $2$.

Recall that, by assumption, we also have that $|I_{\frac{1}{4}R}| \leq \frac{3}{4}R$. In particular, we can assume that the particular event defined in Lemma \ref{lem:segment length bound} and then in Section \ref{sctn:inf UBs} which leads to this length bound occurs. Moreover, taking $\lambda=1$ in that proof and $\frac{1}{4}r$ in place of $r$, and defining ``good" and ``goodish" loops as we did there, the proof ensures that the number of goodish loops encountered before we reach a good one is at most $1$. We claim that this implies that $|A_R| \leq 4$.

To see why, we refer to Figure \ref{fig:loopspine cutset}, which shows a representation of (a discrete approximation of) the W-loopspine. Defining good and goodish loops for the radius $\frac{1}{4}R$ as in Lemma \ref{lem:segment length bound}, we will assume a ``worst-case scenario": that there does indeed exist a goodish loop, and that the smaller of the two segments that it is broken into along the W-loopspine is less than $\frac{1}{4}R$ in length. Since $|I_{\frac{1}{4}R}| \leq \frac{3}{4}R$, it follows that all of the loops that fall between the root and this goodish loop, and also between this goodish loop and the good loop, are completely contained within $B_d(\rho, \frac{3}{4}R)$, and hence we cannot exit $B_d(\rho, \frac{1}{4}R)$ at a point within these sequences of smaller loops. We can therefore only exit at points on either the goodish loop or the good loop pictured, so we can add two points in $A_R$ in each of these loops to cover all possible exit routes, as shown. We rescale back to the original looptree to get $A_r$. Note that for any $\epsilon > 0$, it also follows that we can choose these points to be within distance $\frac{1}{4} + \epsilon$ of $\rho$.

In the case that the smaller of the two segments of the goodish loop actually has length larger than $\frac{1}{4}R$, we can repeat the argument by treating the goodish loop as the good loop, and the same result holds.

\begin{figure}[h]
\includegraphics[width=16cm]{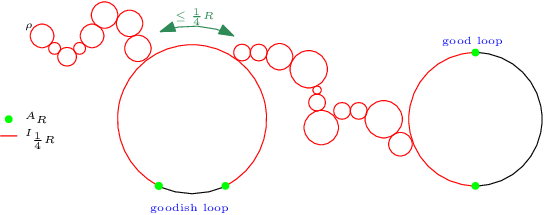}
\centering
\caption{How to select $A_R$. The red segment is a strict subset of $B(\rho, \frac{3}{4}R)$ and contains $B(\rho, \frac{1}{4}R)$. (This is a simplified picture).}\label{fig:loopspine cutset}
\end{figure}

This proves \eqref{eqn:BarKum M}, and we deduce the result as claimed.
\end{proof}
\end{prop}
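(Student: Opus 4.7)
The plan is to work along the same subsequence $(r_n)$ that was produced in the proof of the local infimal upper bound in Section \ref{sctn:inf UBs}, and to show that with positive probability (and after further thinning, almost surely) this subsequence can be chosen so that condition (ii) holds as well. By uniform rerooting invariance of stable looptrees we first reduce to $x = \rho$, and then extend to $\nu$-almost every $x$ by Fubini at the end. By Lemma \ref{lem:dR compare} the resistance and shortest-distance metrics are comparable up to a factor of $2$, so we may carry out the geometric analysis in terms of $d$, absorbing the factor into the $1/64$ constant.

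The route to (ii) is the classical cutpoint bound (as in \cite[Lemma 4.5]{BarKumRWIICTrees}): if $M(\rho, r)$ denotes the minimal cardinality of a set $A \subset \La$ with $d(\rho, z) \in [r/4, 3r/4]$ for each $z \in A$ and such that every path from $\rho$ to $B_d(\rho, r)^c$ passes through $A$, then $R_{\text{eff}}(\rho, B_R(\rho, r)^c) \geq c \cdot r / M(\rho, r)$ for an absolute constant $c$. Therefore it suffices to show that $M(\rho, r_n) \leq 4$ for each $r_n$ in the subsequence. The key structural input is that in the construction of Section \ref{sctn:inf UBs}, the $r_n$ are chosen so that on the rescaled looptree of tree-height one, the W-loopspine intersection $|I_{r_n/4}|$ is at most $3 r_n / 4$ and is realized by a configuration containing a single ``good'' loop and at most one ``goodish'' loop. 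Consequently any $d$-geodesic from $\rho$ to $B_d(\rho, r_n)^c$ must either pass through one of these two loops (each of which can be cut by two points at the desired distance, for a total of at most $4$) or else must escape through a sublooptree grafted to $I_{r_n/4}$ whose diameter exceeds $3r_n/4$.

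The main obstacle is therefore to rule out the existence of such offending sublooptrees along a suitable subsequence. By Theorem \ref{thm:AbDel Williams Decomp}(ii) the sublooptrees grafted to the W-loopspine form a Poisson process governed by the Itô measure $N$, so the expected number of offenders is bounded by
\[
|I_{r_n/4}| \cdot N \bigl( \diam(\tilde{\La}) \geq \tfrac{3}{4} r_n,\ \zeta \leq r_n^{\alpha} \lambda_{r_n}^{-1} \bigr),
\]
where the mass constraint $\zeta \leq r_n^{\alpha} \lambda_{r_n}^{-1}$ is forced by the infimal volume event. The diameter scaling gives $N(\diam(\tilde{\La}) \geq t) \asymp t^{-1}$, while the argument of Proposition \ref{prop:inf LB vol loc prob} produces an exponential-in-$\lambda^{1/\alpha}$ upper bound for the conditional probability that a looptree of diameter at least $r$ has mass below $r^{\alpha}\lambda^{-1}$. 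Combining these two ingredients yields a bound of the form $C r_n^{-1} \exp\{ -c \lambda_{r_n}^{1/\alpha}\}$, so that the expected number of offenders is $O(\exp\{-c (\log\log r_n^{-1})^{(\alpha-1)/\alpha}\})$. This decay is not summable along $r_n = 2^{-n}$, but it is summable along any sufficiently sparse subsequence (e.g.\ $r_{n_\ell} \leq \exp(-e^\ell)$), so the first Borel–Cantelli lemma ensures that eventually almost surely no offender is present.

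On this full-measure event the geometric argument gives $M(\rho, r_{n_\ell}) \leq 4$, yielding $R_{\text{eff}}(\rho, B_R(\rho, r_{n_\ell})^c) \geq r_{n_\ell}/64$, i.e.\ (ii). Condition (i) holds for the same subsequence because it was built into the Section \ref{sctn:inf UBs} construction. Finally, applying Fubini to the uniform point $p(U)$ upgrades the result to $\nu$-almost every $x \in \La$. The delicate step I expect to be the main obstacle is the tension between ``large diameter'' and ``small mass'' controlled through the Itô measure: one has to combine a scaling estimate for $N(\diam \geq t)$ with a Vervaat-transform-plus-oscillation-exponential estimate for the conditional mass, and then calibrate the sparsity of the subsequence so that Borel–Cantelli closes despite only sub-polynomial decay in $r_n^{-1}$.
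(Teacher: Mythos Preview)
Your proposal is correct and follows essentially the same approach as the paper: the cutpoint bound $R_{\text{eff}}(\rho, B_R(\rho,r)^c) \geq cr/M(\rho,r)$ from \cite{BarKumRWIICTrees}, the good/goodish loop structure from Section~\ref{sctn:inf UBs} giving $M(\rho,r_n)\leq 4$, the Poisson estimate on offending sublooptrees via $N(\diam(\tilde{\La})\geq t)\asymp t^{-1}$ combined with the exponential bound of Proposition~\ref{prop:inf LB vol loc prob}, and the sparse subsequence $r_{n_\ell}\leq e^{-e^\ell}$ to make Borel--Cantelli close. You have also correctly anticipated the delicate point, namely calibrating the subsequence sparsity against the sub-polynomial decay $\exp\{-c(\log\log r^{-1})^{(\alpha-1)/\alpha}\}$.
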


\begin{rmk}
In \cite[Theorem 6.2]{ArchInfiniteLooptrees}, we prove for infinite stable looptrees that there almost surely exists a constant $c>0$ such that, for all $r > 0$:
\[
cr (\log \log r^{-1})^{\frac{-(3\alpha - 2)}{\alpha - 1}} \leq R(\rho, B(\rho, r)^c).
\]
The argument given there also applies in the compact case, so we deduce the same result for $\La$.
\end{rmk}

Repeating the proof of \cite[Theorem 20]{CroyHKFluctNonUn} along the subsequence of Proposition \ref{prop:HK subsqnce} gives Theorem \ref{thm:main HK local}. Finally, we refer to \cite[Section 7.4]{ArchInfiniteLooptrees} for the details of the proof of Theorem \ref{thm:main annealed HK conv}.

\subsection{Spectral Asymptotics}

Again, we will not give the details of the proof of Theorem \ref{thm:spec asymp}, since it is very similar to the proof of analogous results for stable trees that were the subject of the paper \cite{CroyHamSpectralStable}. The proofs for stable trees involve applying the spinal decomposition that we introduced in Section \ref{sctn:spinal decomp}, and then repeating this decomposition ad infinitum on the resulting fragments to get finer and finer decompositions of the original tree. The eigenvalue counting function $N(\lambda)$ can be estimated by controlling the diameters of these fragments, which is achieved in \cite{CroyHamSpectralStable} by comparison with a Crump-Mode-Jagers process.

The decomposition is the same in the looptree case, but the key difference in the argument arises from the fact that the diameter of a stable tree has finite moments of all orders, whereas the diameter of a stable looptree does not. This means that some propositions from \cite{CroyHamSpectralStable} to do not transfer verbatim to the looptree case; however it turns out that by fine-tuning some of the proofs of \cite{CroyHamSpectralStable}, the analogous statements are all still true in the looptree case (though we take $\gamma = \frac{\alpha}{\alpha + 1}$ instead). For a full understanding of the proof strategy, we refer the reader to \cite{CroyHamSpectralStable}, since it is too long to summarise succintly here. The only difference in the looptree proof is that we must sharpen the upper bound given in the analogous result to \cite[Lemma 4.2]{CroyHamSpectralStable}: in part (i) we do not have sufficiently high moments on the diameter to apply Cauchy Schwarz twice, but this can be rectified by noting that the rescaled diameters appearing in the expectation there are in fact independent of the other terms in the expectation, so can be factorised out directly; in part (ii) the rescaled diameters are not independent of each other, but are still independent of the \textit{other} terms in the expectation, so can be factored out once as a pair and then we only have to apply Cauchy-Schwarz once (rather than appling H\"older twice as in \cite{CroyHamSpectralStable}). This gives a slightly stronger version of Lemma 4.2 for looptrees, and it is possible to chase the remaining arguments of \cite{CroyHamSpectralStable} through in the same way but using this stronger upper bound whenever Lemma 4.2 is applied.

\begin{footnotesize}
\bibliographystyle{alpha}
\bibliography{AIHP1103Archer.bbl}

\end{footnotesize}

\end{document}